\def\dif{\mathrm{d}}
\def\DD{\mathcal{D}}
\def\HH{\mathcal{H}}
\def\II{\mathcal{I}}
\def\LL{\mathcal{L}}
\def\MM{\mathcal{M}}
\def\NN{\mathcal{N}}
\def\YY{\mathcal{Y}}
\def\Pas{\P\text{-a.s.}}
\renewcommand{\P}{\ensuremath{\mathbb{P}}}
\DeclareMathOperator*{\argmin}{arg\,min}
\newcommand{\vt}{\ensuremath{\vartheta}}
\newcommand{\he}{\ensuremath{\widehat{\epsilon}}}
\DeclareMathOperator{\vecc}{vec}
\DeclareMathOperator{\tr}{tr}
\theoremstyle{plain}
\newtheorem{theorem}{Theorem}[section]
\newtheorem{lemma}[theorem]{Lemma}
\newtheorem{proposition}[theorem]{Proposition}
\newtheorem{definition}[theorem]{Definition}
\newtheorem{corollary}[theorem]{Corollary}
\newtheorem{assumptionletter}{{\textbf{Assumption}}}
\theoremstyle{definition}
\newtheorem{remark}[theorem]{Remark}
\numberwithin{equation}{section}
\renewcommand{\labelenumi}{(\roman{enumi})}
\titleformat{\paragraph}{\normalfont\bfseries}{\theparagraph}{}{}
\titlespacing*{\paragraph}{0pt}{3.25ex plus 1ex minus .2ex}{0.1em}
\title{Information Criteria for \\[2mm] Multivariate CARMA Processes}
\author{Vicky Fasen \setcounter{footnote}{1}\thanks{Institute of Stochastics, Englerstra{\ss}e 2,
D-76131 Karlsruhe, Germany. \emph{Email:}
\href{mailto:vicky.fasen@kit.edu}{vicky.fasen@kit.edu}}
\thanks{Financial
support by the Deutsche Forschungsgemeinschaft through the research
grant FA 809/2-2.} \and Sebastian Kimmig \setcounter{footnote}{0}\thanks{Institute of Stochastics, Englerstra{\ss}e 2,
D-76131 Karlsruhe, Germany. \emph{Email:}
\href{mailto:sebastian.kimmig@kit.edu}{sebastian.kimmig@kit.edu}}}
\date{}
\begin{document}
%
\maketitle
%
\begin{abstract}
Multivariate continuous-time ARMA$(p,q)$ (MCARMA$(p,q)$) processes
are the \linebreak continuous-time
analog of the well-known vector ARMA$(p,q)$ processes. They have attracted interest
over the last years. 
Methods to estimate the parameters of an MCARMA process  require an identifiable parametrization  such as the Echelon form
with a fixed Kronecker index, which is in the
one-dimensional case the degree $p$ of the autoregressive polynomial. Thus, the Kronecker index
has to be known in advance before the parameter estimation is done.
When this is not the case information criteria can be used to estimate the
Kronecker index and the degrees $(p,q)$, respectively.
In this paper we investigate information criteria for MCARMA processes based on quasi maximum likelihood estimation. Therefore, we first derive the asymptotic
properties of quasi maximum likelihood estimators for MCARMA processes in a misspecified parameter space. Then, we present necessary and sufficient conditions for
information criteria to be strongly and weakly consistent, respectively.
In particular, we study the well-known Akaike Information Criterion (AIC) and the Bayesian Information Criterion
(BIC) as special cases.
\end{abstract}

\noindent
\begin{tabbing}
\emph{AMS Subject Classification 2010: }\=Primary:  62B10,  62F12, 62M86
\\ \> Secondary:  62F10, 62M10
\end{tabbing}

\vspace{0.2cm}\noindent\emph{Keywords:} AIC, BIC,  CARMA process, consistency, information criteria, law of iterated logarithm,  Kronecker index, quasi maximum likelihood estimation

\section{Introduction}
%
%


In this paper we study necessary and sufficient conditions for weak and strong consistency
of information criteria for multivariate  continuous-time ARMA$(p,q)$ (MCARMA$(p,q)$) processes.
One-dimensional Gaussian CARMA
processes were already investigated by Doob \cite{Doob1944} in 1944
and L\'{e}vy-driven CARMA processes were propagated at the beginning of this century by Peter Brockwell,  see \cite{Brockwell:2014} for an overview.
An $\R^s$-valued L\'{e}vy process $(L(t))_{t\geq 0}$ is a stochastic process in $\R^s$ with independent and
stationary increments, $L(0)=0_s$ $\p$-a.s. and c\`{a}dl\`{a}g (continue \`{a} droite, limite \`{a} gauche) sample paths. Special cases of L\'{e}vy processes are Brownian motions and (compound) Poisson processes. Further information on L\'{e}vy processes can be found in \cite{applebaum,bertoin,sato}, for example.
A formal definition of an MCARMA process was given recently in \cite{marquardtstelzer}; see \Cref{Section:2} of this paper. The idea behind it
is that for a two-sided $\R^s$-valued L\'{e}vy process
 $L=(L(t))_{t\in\R}$, i.e. $L(t)=L(t)\1_{\{t\geq 0\}}-\wt L(t-)\1_{\{t<0\}}$ where $(\wt L(t))_{t\geq 0}$ is an independent
copy of the L\'{e}vy process $(L(t))_{t\geq 0}$,   and positive integers $p>q$,
a $d$-dimensional MCARMA$(p,q)$ process is the solution to the
stochastic differential equation
\begin{eqnarray} \label{eq1.1}
     {P}(D)Y(t)={Q}(D)D L(t) \quad \mbox{ for } t\in\R,
\end{eqnarray}
where $D$ is the differential operator,
\beam \label{Pol P}
    {P}(z):= I_{d\times d}z^p+A_1z^{p-1}+\ldots+A_{p-1}z+A_p
\eeam
with $A_1,\ldots, A_p\in \R^{d\times d}$ is the autoregressive polynomial and
\beam \label{Pol Q}
    {Q}(z):=B_0z^q+ B_1z^{q-1}+\ldots+ B_{q-1}z+B_q
\eeam
with $B_0,\ldots,B_q\in \R^{d\times s}$
is the moving average polynomial. There are a few papers studying the statistical inference
of MCARMA processes, e.g. \cite{brockwellschlemm,fasen:2014b,fasen:2014,Ferrazzano:Fuchs,schlemmstelzer2,schlemmstelzer}.
In particular, \cite{schlemmstelzer} derive the asymptotic behavior of the quasi maximum likelihood estimator (QMLE)
under the assumption that the underlying parameter space $\Theta$ with $N(\Theta)$ parameters contains the true parameter and satisfies
some identifiability assumptions; see \cite{Brockwell:Davis:Yang:2011} as well. These are
typical assumptions for estimation procedures. For a one-dimensional CARMA process we only obtain identifiability  when the degree $p$ of the
autoregressive polynomial is fixed for all processes generated by parameters in the parameter space; in the multivariate setup the
Kronecker index, which specifies in detail the order of the coefficients of the multivariate
autoregressive polynomial, has to be fixed. If we know the Kronecker index
we know the degree $p$ of the autoregressive polynomial as well.
But if we observe data, how do we know what is the true Kronecker index of the data, so that we do the parameter estimation
in a suitable parameter space $\Theta$? That is the point where we require model selection criteria or, synonymously, information criteria.
The most prominent model selection criteria are the Akaike Information Criterion (AIC)
introduced in  \cite{akaike} by Akaike, the Schwarz Information Criterion (SIC), also known as BIC (Bayesian Information Criterion), going back to
\cite{Schwarz:1978}, and the Hannan-Quinn criterion in \cite{Hannan:Quinn:1979}. The AIC approximates the Kullback-Leibler discrepancy,
whereas the BIC approximates the Bayesian a posteriori distribution of the different candidate models. The Hannan-Quinn criterion
is based on the AIC of Akaike but with a different penalty term to obtain a strongly consistent information criterion.
Information criteria for multivariate ARMAX processes and their statistical inference are well-studied in the
monograph~\cite{hannandeistler}; see also \cite{brockwell} for an overview of model selection criteria for  ARMA processes.
An extension of the AIC to multivariate weak ARMA processes is given in \cite{boubacar}. There exist only
a few papers investigating information criteria independent of the underlying model, e.g.
\cite{sinwhite} present very general likelihood-based information
criteria and their properties, and \cite{cavanaughneath} derive the BIC.
All of these information criteria have in common that they are likelihood-based and choose  as candidate
model the model for which the information criterion attains the lowest value.
They are of the form
\begin{equation*}
\text{IC}_n(\Theta_{}) :=\wh \LL ( \widehat{\vt}^n, Y^n) + N(\Theta) \frac{C(n)}{n}.
\end{equation*}
In our setup $Y^n=(Y(h),\ldots,Y(hn))$ is a sample of length $n$ from an MCARMA process, 
$\wh \LL$ is the properly normalized quasi log-likelihood function, $\wh{\vt}^n$ is the QMLE and $C(n)$ is a penalty term. 
We choose the parameter space as the most suitable for which the information criterion is lowest, this means that for two parameter
spaces $\Theta_1,\Theta_2$ we say that $\Theta_1$ fits the  data  better than $\Theta_2$ if  we have $IC_n(\Theta_1)<IC_n(\Theta_2)$.
A strongly consistent information criterion chooses  the correct space asymptotically with probability $1$, and for a weakly consistent information criterion the  convergence
 to the true space holds in probability.
The sequence  $C(n)$  can be interpreted as a penalty term for the inclusion of more parameters into the model. Without the penalty term, the criterion would always choose the model with more parameters if we compare two parameter spaces  both containing a parameter that generates the data.
However, this is not feasible, since the inclusion of too many parameters ultimately leads to an interpolation of the data, such that the model would not provide information about the process generating the data anymore. The employment of an information criterion can therefore be seen as seeking a trade-off between accuracy and complexity.

The rest of the paper is structured in the following way. In \Cref{Section:2} we present basic facts on MCARMA processes
and state space models. Since our information criteria are based on quasi maximum likelihood estimation
we define first, in \Cref{Section:3.1}, the quasi log-likelihood function for  MCARMA processes and in \Cref{Section:3.2} the model assumptions.
Then, in \Cref{Section:3.4}, we derive the asymptotic normality of the QMLE 
extending the results given in \cite{schlemmstelzer} to a misspecified parameter space. For the proof of strong consistency of the information criteria
we require some knowledge about the asymptotic behavior of the quasi log-likelihood function $\wh\LL$ as well. For this reason we
prove in \Cref{Section:3.5} a law of the iterated logarithm for the quasi log-likelihood function $\wh\LL$. \Cref{Section:4}
contains the main results of the paper:  necessary and sufficient conditions for strong and weak
consistency of information criteria. In particular, we investigate Gaussian MCARMA processes where
the results are explicit. Special information criteria are the AIC and the BIC which are the topic of \Cref{Section:5}.
Finally, we conclude with a simulation study in \Cref{sec:simulation}. The Appendix contains some auxiliary results. 

\subsubsection*{Notation} \vspace*{-0.3cm}

We  use the notation $\stackrel{\DD}{\to}$ for weak convergence and
$\stackrel{\p}{\to}$ for convergence in probability.
For two random vectors $Z_1,\,Z_2$ the notation $Z_1 \stackrel{\DD}{=} Z_2$ means equality in distribution.
We use as norms the Euclidean norm $ \lVert\cdot \rVert$ in $\R^d$
and the spectral norm $ \lVert\cdot \rVert$ for
matrices, which is submultiplicative and induced by the Euclidean norm. Recall that two norms
on a finite-dimensional linear space are always equivalent and hence,
our results remain true if we replace the Euclidean norm by any other norm.
  The matrix $0_{d\times s}$
is the zero matrix in $\R^{d\times s}$ and $I_{d\times d}$ is the
identity matrix in $\R^{d\times d}$.
For a vector $x\in\R^d$ we write $x^T$ for its
transpose. For a matrix $A\in\R^{d\times d}$ we denote by $\tr(A)$ its trace, by $\det(A)$ its determinant and by $\lambda_{\text{max}}(A)$ its largest eigenvalue. If $A$ is symmetric and positive semidefinite we write $A^{\frac12}$ for the principal square root, i. e. $A^{\frac12}$ is the symmetric, positive semidefinite matrix satisfying $A^\frac12 A^\frac12 = A$. For two matrices $A \in \R^{d \times s}$ and $B \in \R^{r \times n}$, we denote by $A \otimes B$ the Kronecker product, which is an element of $\R^{dr \times sn}$. The notation $\vecc(A)$ describes the $ds \times 1$ row vector which results from stacking the columns of $A$ beneath each other.
The symbols $\E$, $\Var$, and $\Cov$ stand for the
expectation, variance and covariance operators, respectively. For a sequence of random variables $(X_n)_{n \in \N}$ we say that $X_n$ is $o_{\text{a.s.}}(a_n)$ if $\left| {X_n}/{a_n} \right| \to 0$ as $n \to \infty$  $\Pas$ and likewise that $X_n$ is $O_{\text{a.s.}}(a_n)$ if $\limsup_{n \to \infty} \left| {X_n}/{a_n} \right| < \infty$  $\Pas$
We write $\partial_i$ for the partial derivative
operator with respect to the $i$-th coordinate and $\nabla=(\partial_1,\ldots,\partial_r)$
for the gradient operator in $\R^r$. Finally, by $\partial^2_{i,j}$ we denote the second partial derivative with respect to the coordinates $i$ and $j$, and by $\nabla^2_\vt f$ we denote the Hessian matrix of the function $f$. When there
is no ambiguity, we use $\partial_i f(\vartheta_0)$, $\nabla_\vartheta f(\vartheta_0)$ and $\nabla^2_\vt f(\vt_0)$  as shorthands for
$\partial_i f(\vartheta)|_{\vartheta=\vartheta_0}$,  $\nabla_\vartheta f(\vartheta)|_{\vartheta=\vartheta_0}$ and $\nabla^2_\vt f(\vt)|_{\vt=\vt_0}$, respectively.
We interpret $ \nabla_\vartheta f(\vartheta)$ as a column vector.
In general $C$ denotes a constant which may change from line to line.

\section{MCARMA processes and state space processes} \label{Section:2}

We start with the formal definition of an MCARMA process, which can be interpreted as
solution of \eqref{eq1.1}.

\begin{definition} \label{Definition CARMA}
Let $(L(t))_{t\in\R}$  be an $\R^s$-valued L\'{e}vy process with $\E\|L(1)\|^2<\infty$ and let
the polynomials ${P}(z),{Q}(z)$ be defined as in \eqref{Pol P} and \eqref{Pol Q} with $p,q\in\N_0$, $q<p$,
and $B_0\not=0_{d\times s}$.
Moreover, define
\beao
    A=\left(\begin{array}{ccccc}
        0_{d\times d} & I_{d\times d} & 0_{d\times d} & \cdots & 0_{d\times d}\\
        0_{d\times d} & 0_{d\times d} & I_{d\times d} & \ddots & \vdots \\
        \vdots & & \ddots & \ddots & 0_{d\times d}\\
        0_{d\times d} & \cdots & \cdots & 0_{d\times d} & I_{d\times d}\\
        -A_p & -A_{p-1} & \cdots & \cdots & -A_1
    \end{array}\right) \in \R^{pd\times pd},
\eeao
$C=(I_{d\times d},0_{d\times d},\ldots,0_{d\times d}) \in \R^{d\times pd}$ and $B=(\beta_1^T \cdots \beta_p^T)^T\in \R^{pd\times s}$ with
\beao
   \beta_1:=\ldots:=\beta_{p-q-1}:=0_{d\times s}\quad
   \mbox{ and } \quad \beta_{p-j}:=-\sum_{i=1}^{p-j-1}A_i \beta_{p-j-i}+B_{q-j},  \quad j=0,\ldots,q.
\eeao
Assume that the eigenvalues of $A$ have strictly negative real parts.
Then the $\R^{d}$-valued {\em causal \linebreak MCARMA$(p,q)$ process} $Y=(Y(t))_{t\in\R}$ is  defined by the state space
equation
\beam \label{CARMA:observation}
    Y(t)=C X(t) \quad \mbox{ for } t\in\R,
\eeam
where
$X$ is the stationary unique solution to the $pd$-dimensional
stochastic differential equation
\begin{equation}\label{cstateeq}
\dd X(t)=A X(t)\,\dd t+B \,\dd L(t).
\end{equation}
\end{definition}
In particular, the MCARMA$(1,0)$ process and
$X$ in \eqref{cstateeq} are multivariate Ornstein-Uhlenbeck processes.
For more details on the well-definedness of the  MCARMA$(p,q)$ process see~\cite{marquardtstelzer}.
The class of MCARMA processes is huge. Schlemm and Stelzer~\cite[Corollary~3.4]{schlemmstelzer}
showed that the class
of continuous-time state space models of the form
\begin{eqnarray} \label{2.1}
    Y(t)=CX(t) \quad\mbox{ and }\quad \dd X(t)=A X(t)\,\dd t+ B \,\dd L(t),
\end{eqnarray}
where $A\in\R^{N\times N}$ has only eigenvalues with strictly negative
real parts, $B\in\R^{N\times s}$ and $C\in\R^{d\times N}$,
and the class of causal MCARMA processes are equivalent if $\E\|L(1)\|^2<\infty$
and $\E[L(1)]=0_s$.
In general, when we talk about an MCARMA process or a state space model $Y$, respectively, corresponding
to $(A,B,C,L)$, we mean that the MCARMA process $Y$ is defined as in \eqref{2.1} and shortly
write $Y=\text{MCARMA}(A,B,C,L)$.

In this paper we observe the MCARMA process only on a discrete equidistant time-grid with grid
distance $h>0$. It is well-known that the Ornstein-Uhlenbeck process $(X(t))_{t\in\R}$ sampled at $h\Z$
is an AR(1)-process with
\begin{eqnarray*}
    X(kh)=\e^{Ah}X((k-1)h)+N_{h,k}, \quad k\in\Z,
\end{eqnarray*}
where $N_{h,k}=\int_{(k-1)h}^{kh}\e^{A(kh-t)}B\,\dd L(t)$ is a sequence of i.i.d. random vectors.
We denote its covariance matrix by $\Cov(N_{h,k})=\cancel{\Sigma}_h$. Hence, $(Y(kh))_{k\in\Z}$ is the output process of the discrete-time state space model
\begin{equation}  \label{eq2.5}
    Y(kh)=CX(kh) \quad \text{ where } \quad
    X(kh)=\e^{Ah}X((k-1)h)+N_{h,k}.
\end{equation}
This discrete-time state space representation is basic for quasi maximum likelihood estimation.

\section{Quasi maximum likelihood estimation }\label{sec:QMLEMCARMA} \label{Section:3}

\subsection{Definition} \label{Section:3.1}

Since the MCARMA process observed at discrete  equidistant time points is a discrete-time state space model as given in \eqref{eq2.5},
we use quasi maximum likelihood estimation for discrete-time state space models with respect to identification issues. We now review the most important aspects of estimation
 as it is done in \cite{schlemmstelzer} for MCARMA processes. The estimation is based on the Kalman filter, which calculates the linear
innovations of a Gaussian discrete-time state space model; originally introduced in \cite{kalman} and described in a time series context in \cite[§12.2]{brockwell}.

\begin{definition} \label{definition innovations}
Let $(Z_k)_{k \in \Z}$ be an $\R^d$-valued stationary stochastic process with finite second moments. The linear innovations $\epsilon = (\epsilon_k)_{k \in \Z}$  are then defined as $\epsilon_k = Z_k - P_{k-1} Z_k$, where $P_k$ denotes the orthogonal projection onto the space $\overline{\text{span}} \{ Z_j: - \infty < j \leq k \}$ and the closure is taken in $L^2$.
\end{definition}
Note that this definition ensures that the innovations of such a process are stationary, uncorrelated and have mean $0$.
In the following we calculate the linear innovations of $(Y(kh))_{k\in\Z}$.

For this purpose, let $\Omega$ be the solution to the discrete-time Riccati equation
\begin{align*}
\Omega^{} &= \e^{A h} \Omega^{}  \e^{A^T h} + {\cancel{\Sigma}_h}
- \left( \e^{A h} \Omega^{} C^T \right) \left( C \Omega^{} C^T \right)^{-1} \left( \e^{ A h} \Omega C^T \right)^T, \label{calcvhat1}
\end{align*}
which exists by  \cite[Proposition 2.1i)]{schlemmstelzer}. Then, the Kalman gain matrix is
\begin{equation*}\label{calcvhat2}
K^{} = \left( \e^{A h} \Omega^{} C^T \right) \left( C \Omega^{} C^T \right)^{-1}.
\end{equation*}
The linear innovations of $(Y(kh))_{k\in\Z}$ can be calculated as 
\begin{equation}\label{kalman3}
\begin{split}
\epsilon_k = Y_k - C\widehat{X}_k, \quad k\in \Z,  \quad\text{ with } \quad
\widehat{X}_k = \sum_{j=1}^{\infty}(\e^{A h} - K C)^{j-1}K Y((k-j)h).
\end{split}
\end{equation}
The covariance matrix of the innovations is $V := \E[\epsilon_k\epsilon_k^T]=C \Omega^{} C^T$.
If we observe data we unfortunately do not know the model parameter behind it and hence,
we have to calculate the  so-called pseudo-innovations.
In the following we assume that our data set is generated by  a continuous-time state space
model $(A, B, C, L)$, i.e. $Y=\text{MCARMA}(A,B,C,L)$. Moreover, we have
have a parametric family of MCARMA models $(A_\vt,B_\vt,C_\vt,L_\vt)$  with $\vt$ in the
parameter space $\Theta \subset \R^{N(\Theta)}$, $N(\Theta) \in \N$. The aim is to find
$\vt_0\in\Theta$ such that $\text{MCARMA}(A_{\vt_0},B_{\vt_0},C_{\vt_0},L_{\vt_0})=Y$.
Therfore, we calculate for every $\vt\in\Theta$ the steady-state Kalman gain matrix $K^{}_\vt$ and covariance matrix $V^{}_\vt$ via the discrete-time Riccati equation
\begin{align*}
\Omega^{}_\vt &= \e^{A_\vt h} \Omega^{}_\vt  \e^{A^T_\vt h} + \cancel{\Sigma}_{\vt,h}
- \left( \e^{A_\vt h} \Omega^{}_\vt C^T_\vt \right) \left( C_\vt \Omega^{}_\vt C^T_\vt \right)^{-1} \left( \e^{ A_\vt h} \Omega^{}_\vt C^T_\vt \right)^T, \label{calcvhat1}
\end{align*}
as
\begin{equation*}
K^{}_\vt = \left( \e^{A_\vt h} \Omega^{}_\vt C^T_\vt \right) \left( C_\vt \Omega^{}_\vt C^T_\vt \right)^{-1} \quad \mbox{ and }\quad V^{}_\vt = C_\vt \Omega^{}_\vt C^T_\vt.
\end{equation*}
Based on this the \textsl{pseudo-innovations} are defined as
\begin{equation*}\label{approxpseudoinno}
\begin{split}
    \varepsilon_{\vt, k} = Y((k-1)h) - C_\vt \widehat{X}_{\vt, k}, \quad k \in \Z, \quad \text{ with } \quad
    \widehat{X}_{\vt, k} =\sum_{j=1}^{\infty}(\e^{A_\vt h} - K_\vt C_\vt)^{j-1}K_\vt Y((k-j)h).
\end{split}
\end{equation*}
Note that $(\widehat{X}_{\vt, k})_{k\in\Z}$ can also be calculated recursively by
\begin{eqnarray*}
    \widehat{X}_{\vt, k}=(\e^{A_\vt h} - K_\vt C_\vt) \widehat{X}_{\vt, k-1} + K_\vt Y((k-1)h).
\end{eqnarray*}
For $\vt_0$ such that $\text{MCARMA}(A_{\vt_0},B_{\vt_0},C_{\vt_0},L_{\vt_0})=Y$ the pseudo-innovations $(\epsilon_{\vt_0,k})_{k\in\N}$ are the innovations as given in \Cref{definition innovations} and $V_{\vt_0}=\E[\epsilon_{\vt_0,1}\epsilon_{\vt_0,1}^T]$. With this, $-2/n$ times the Gaussian log-likelihood of the model associated to $\vt$ is
\begin{equation} \label{loglike}
\LL (\vartheta, Y^n) = \frac{1}{n}\sum_{k=1}^{n} (d \log(2 \pi) + \log( \text{det}( V_{\vartheta})) + \epsilon^T_{\vartheta, k} V^{-1}_{\vartheta} \epsilon_{\vartheta,k} ).
\end{equation}
Defining
\begin{equation*} 
    l_{\vt, k}:= d \log(2 \pi) + \log( \text{det}( V_{\vartheta})) + \epsilon^T_{\vartheta, k} V^{-1}_{\vartheta} \epsilon_{\vartheta,k} , \quad k\in\Z,
\end{equation*}
we can also write
    $\LL (\vartheta, Y^n) = \frac{1}{n}\sum_{k=1}^{n}l_{\vt, k}.$
The expectation of this random variable is
 \begin{equation*}
 \mathscr{Q}(\vt): = \E \left[ \LL( \vt, Y^n ) \right]. 
\end{equation*}
In practical scenarios it is not possible to calculate the pseudo-innovations, as they are defined in terms of the full history of the process $Y$ but we have
only finitely many observations. Suppose now that we have $n$ observations of the output process $Y$, contained in the sample \linebreak $Y^n = (Y(h), \ldots, Y(nh))$.  Therefore we need a method to approximate the pseudo-innovations based on this finite sample. We initialize the filter  at $k=1$ by prescribing $\widehat{X}_{\vt, 1} = \widehat{X}_{\vt, \text{initial}}$ and use the recursion
\begin{equation*}
\begin{split}
\widehat{X}_{\vt, k} &= (\e^{A_\vt h} - K_\vt C_\vt) \widehat{X}_{\vt, k-1} + K_\vt Y((k-1)h), \quad k \geq 2, \\
\he_{\vt, k} &= Y((k-1)h) - C_\vt \widehat{X}_{\vt, k}, \quad k \in \N.
\end{split}
\end{equation*}
The $\he_{\vt, k}$ are denoted as \textit{approximate pseudo-innovations}. Substituting the approximate pseudo-innovations for their theoretical counterparts in \eqref{loglike}, we obtain the quasi log-likelihood function
as
\begin{equation}\label{emploglike}
\widehat{\LL} (\vartheta, Y^n): = \frac{1}{n}\sum_{k=1}^{n} (d \log(2 \pi) + \log( \text{det}( V_{\vartheta})) + \he^T_{\vartheta, k} V^{-1}_{\vartheta} \he_{\vartheta,k} ).
\end{equation}
The QMLE based on the sample $Y^n$ is then given by
\begin{equation}\label{GQMLE}
\widehat{\vt}^n: = \argmin\limits_{\vt \in \Theta} \widehat{\LL} (\vartheta, Y^n).
\end{equation}
The idea is that $\widehat{\vt}^n$ is an estimator for the \textsl{pseudo-true parameter}
\begin{equation}\label{defpseudotrue}
\vt^* := \argmin_{\vt \in \Theta}  \mathscr{Q}(\vt).
\end{equation}
The function $\mathscr{Q}$ attains its minimum at $\vt^*$ in the space $\Theta$. However, if we minimize only over $\Theta$ and
$\Theta$ does not contain a parameter generating $Y$ then it is not clear that the minimum, and hence $\vt^*$, is uniquely defined.
On the other hand, if there is a $\vt_0\in\Theta$ with $\text{MCARMA}(A_{\vt_0},B_{\vt_0},C_{\vt_0},L_{\vt_0})=Y$ then
$\vt^*=\vt_0$. The last case was investigated in \cite{schlemmstelzer}.

\subsection{Assumptions} \label{Section:3.2}

In this section we give the model assumptions which we require for the asymptotic results on the QMLE $\wh\vartheta^n$.
The next definition introduces the concept of minimal algebraic realizations of matrix polynomials, which is essential in describing identifiable parametrizations of MCARMA processes.
\begin{definition}
Let $H$ be a $d \times s$ rational matrix function, i. e. a $d \times s$ matrix whose entries are rational functions of the variable $z \in \R$.
\begin{itemize}
 \item[(a)] A matrix triple $(A,B,C)$ is called an algebraic realization of $H$ of dimension $N$ if $H(z) = C (z I_{N \times N} - A)^{-1} B$ for every $z \in \R$, where $A \in \R^{N \times N}$, $B \in \R^{N \times s}$ and $C \in \R^{d \times N}$.
 \item[(b)] A minimal realization of $H$ is an algebraic realization of $H$ of dimension smaller or equal to the dimension of every other algebraic realization of $H$. The dimension of a minimal realization of $H$ is the McMillan degree of $H$.
\end{itemize}
\end{definition}
We now present the assumptions we use  in the development of the asymptotic theory of the QMLE: 
\setcounter{assumptionletter}{1}
\begin{assumptionletter}\strut
         \label{as_D}
         \renewcommand{\theenumi}{B.\arabic{enumi}}
         \renewcommand{\labelenumi}{\theenumi}
         \begin{enumerate}
                 \item \label{as_D1} The parameter space $\Theta$ is a compact subset of $\R^{N(\Theta)}$.
                 \item \label{as_D2} For each $\vt \in \Theta$, it holds that $\E[L_\vt] = 0$, $\E \| L_\vt(1) \|^2 < \infty$ and the covariance matrix
                 \linebreak $\Sigma^L_\vt = \E \left[ L_\vt(1) L^T_\vt(1) \right]$ is non-singular.
                 \item \label{as_D3} For each $\vt \in \Theta$, the eigenvalues of $A_\vt$ have strictly negative real parts and
                 are elements of $\{ z \in \C: -\frac{\pi}{h} < \text{Im}(z) < \frac{\pi}{h} \}$.
                 \item \label{as_D8} The pseudo-true parameter $\vt^*$ as defined in \eqref{defpseudotrue}
                 is an element of the interior of $\Theta$.
                 \item \label{as_D9} For the L\'{e}vy process $L$ which drives the observed process $Y$ there exists a positive number $\delta$ such that $\E \| L(1) \|^{4+\delta} < \infty$.
                 \item \label{as_D10} For every $\epsilon > 0$ there exists a
$\delta(\epsilon) > 0$ such that
\begin{equation*}
      \mathscr{Q}(\vt^\ast) \leq \min_{\vt \in B_\epsilon(\vt^*)^c \cap \Theta}   \mathscr{Q}(\vt)    - \delta(\epsilon),
\end{equation*}
where $B_\epsilon(\vt^*)$ is the open ball with center $\vt^*$ and radius $\epsilon$.
 \item \label{as_D11} The Fisher information matrix  of the QMLE is non-singular.
   \item \label{as_D4} The functions $\vt \mapsto A_\vt$, $\vt \mapsto B_\vt$, $\vt \mapsto C_\vt$ and $\vt \mapsto \Sigma^L_\vt$ are three times continuously differentiable. Moreover, for each $\vt \in \Theta$, the matrix $C_\vt$ has full rank.
                 \item \label{as_D5} For all $\vt \in \Theta$, the triple $(A_\vt, B_\vt, C_\vt)$ is minimal with  McMillan degree $N$.
                 \item \label{as_D6} The family of output processes  $(\text{MCARMA}(A_\vt, B_\vt, C_\vt, L_\vt))_{\vt \in \Theta}$ is identifiable from the spectral density.
         \end{enumerate}
\end{assumptionletter}

\begin{remark}\label{chap3remark}  $\mbox{}$\\
 (a) \, Every process in the family $(A_{\vt}, B_{\vt}, C_{\vt}, L_{\vt})_{\vt \in \Theta}$ has a different spectral density by $\ref{as_D6}$. Moreover, by $\ref{as_D5}$ it is also ensured  for two parameter spaces $\Theta$ and $\Theta'$ both satisfying \autoref{as_D} with different McMillan degrees that
     the processes generated by parameters in $\Theta$ are different from the processes generated by parameters in $ \Theta' $.\\
(b)\, Assumption $\ref{as_D10}$ is a property called identifiable uniqueness. It makes sure that $\vt^*$ is the unique minimum of $\mathscr{Q}(\vt)$ in
  $\Theta$ (cf. \cite[p. 28]{white1996}).
 In the correctly specified case, i. e. when the space $\Theta$ contains $\vt_0$ with $\text{MCARMA}(A_{\vt_0},B_{\vt_0},C_{\vt_0},L_{\vt_0})=Y$, the identifiable uniqueness follows from some properties satisfied by the innovations associated to the true parameter $\vt_0$, 
 i. e. Assumption $\ref{as_D10}$ can then be dropped without any replacement.\\
(c) \, In case of a correctly specified parameter space, we can replace Assumption $\ref{as_D11}$ by the assumption that there exists a positive index $i_0$ such that the $\left[ (i_0 +2) d^2 \right] \times r$ matrix
\begin{equation*} \nabla_\vt \begin{pmatrix} \left[ I_{(i_0+1) \times (i_0+1)} \otimes K^{T}_{\vt_0} \otimes C_{\vt_0} \right] \begin{pmatrix} \vecc \exp (I_{N \times N} h) \\ \vecc \exp(A_{\vt_0} h) \\ \vdots \\ \vecc \exp (A^{i_0}_\vt h) \end{pmatrix} \\ \vecc V_{\vt_0} \end{pmatrix} \end{equation*}
has rank $r$. This condition is used in \cite{schlemmstelzer} as Assumption C11 and guarantees the desired non-singularity.
\end{remark}

\begin{remark}\label{echelonremark}
An MCARMA process $(A,B,C,L)$ in Echelon form with Kronecker index \linebreak $m=(m_1,\ldots,m_d)$ has the property that
 $A = (A_{ij})_{i, j = 1, \ldots, d} \in \R^{N \times N}$ is a block matrix with blocks $A_{ij} \in \R^{m_i \times m_j}$ given by
$$A_{ij} = \begin{pmatrix} 0 & \ldots & \ldots & \ldots & \ldots & 0 \\ \vdots & & & & & \vdots \\ 0 & \ldots & \ldots & \ldots & \ldots & 0 \\ \alpha_{ij,1} & \ldots & \alpha_{ij, \min(m_i+\1_{\{i>j\}},m_j)} & 0 & \ldots & 0 \end{pmatrix} + \delta_{i,j} \begin{pmatrix} 0 & & \\ \vdots & I_{(m_i -1) \times (m_i - 1)} & \\ 0 &  & \\ 0 & \ldots & 0 \end{pmatrix}$$
and
$$C = \begin{pmatrix} 1 & 0 & \ldots & 0 & \vdots & 0 & 0 & \ldots & 0 & \vdots &   & \vdots &  &  &  &  \\
                                     &   &  &  &  &  &  &  &  &  &  &  &  &  & 0_{(d-1)  \times m_d} &   \\
                                     &    &         &    & \vdots & 1 & 0 & \ldots & 0 & \vdots &   & \vdots &   &   &   &   \\
                                     & 0_{(d-1) \times m_1} &  &  &  &  &  &  &  &  &  &  &  &  &  &   \\
                                     &    &         &    & \vdots &   & 0_{(d-2)  \times m_2} &   &   & \vdots &   & \vdots & 1 & 0 & \ldots & 0 \end{pmatrix}$$

The matrix $ B = (b_{ij}) \in \R^{N \times s}$ is unrestricted.
Moreover,  the polynomials $P(z)=[p_{ij}(z)]$ and $Q(z)=[q_{ij}(z)]$ are of the form
\begin{equation*}\label{echeloneq}
p_{ij}(z) = \delta_{i,j} z^{m_i} - \sum_{k=1}^{\min(m_i+\1_{\{i>j\}},m_j)} \alpha_{ij,k} z^{k-1} \quad \mbox{ and }\quad q_{ij}(z) = \sum_{k=1}^{m_i} \kappa_{\nu_1 + \ldots+ \nu_{i-1} + k, j} z^{k-1},
\end{equation*}
where $\kappa_{i,j}$ is the $(i,j)$th entry of the matrix $K = TB$, where $T = (T_{ij})_{i,j = 1, \ldots, d} \in \R^{N \times N}$ is a block matrix with blocks $T_{ij} \in \R^{m_i \times m_j}$ given by
$$ T_{ij} = \begin{pmatrix} -\alpha_{ij,2} & \ldots & - \alpha_{ij, \min(m_i+\1_{\{i>j\}},m_j)} & 0 & \ldots & 0 \\
                   \vdots &\iddots & & & & \vdots \\
                   -\alpha_{ij,\min(m_i+\1_{\{i>j\}},m_j)} & & & & & \vdots \\
                   0 & & & & & \vdots \\
                   \vdots & & & & & \vdots \\
                   0 & \ldots & \ldots & \ldots & \ldots & 0 \end{pmatrix} + \delta_{i,j} \begin{pmatrix} 0 & 0 & \ldots & \ldots & 0 & 1 \\
                                                                                                         0 & 0 & \ldots & & 1 & 0 \\
                                                                                                         \vdots & \vdots & &\iddots & &\vdots \\
                                                                                                         \vdots & &\iddots & & \vdots  & \vdots \\
                                                                                                         0 &1 & &\ldots & 0 & 0 \\
                                                                                                         1 & 0 & \ldots & \ldots &0 &0 \end{pmatrix}.$$
This means that the Kronecker index specifies the degrees of the polynomials on the diagonal of the autoregressive
polynomial $P(z)$; the polynomials on the secondary line have a degree of at most $\min(m_i+\1_{\{i>j\}},m_j)$. In particular, we can calculate
the degree $p=\max_{i=1,\ldots,d}m_i$ of the autoregressive polynomial.
 Moreover, the polynomials $P$ and $Q$ can be calculated explicitly from $A,B$ and $C$.
Important is that an MCARMA process in Echelon form  fulfills the smoothness and identifiability assumptions 
$\ref{as_D4}$, $\ref{as_D5}$ and $\ref{as_D6}$.
A special subclass  of MCARMA processes in Echelon form are the one-dimensional CARMA processes, for which the degree $p$ of the autoregressive polynomial is fixed and the zeros of $P$ and $Q$ are distinct. This class
corresponds to the class of CARMA processes in Echelon form with Kronecker index $p$. For more details on MCARMA processes in Echelon form we refer to \cite[Section 4.1]{schlemmstelzer}.
\end{remark}

\subsection{Asymptotic normality} \label{Section:3.4}

The next proposition collects auxiliary results which are used in the proof of the asymptotic normality of the QMLE. They are highlighted here separately for easier reference, because they will appear again later in a different context.
\begin{proposition}\label{helpprop} \strut
\begin{itemize}
\item[(a)]
Assume that the space $\Theta$ with associated family of continuous-time state space models \linebreak $(A_{\vt}, B_{\vt}, C_{\vt}, L_{\vt})_{\vt \in \Theta}$ satisfies Assumptions $\ref{as_D1}$ to $\ref{as_D3}$ as well as $\ref{as_D9}$. Then, there exists a pseudo-true parameter $\vt^* \in \Theta$ as defined in Equation \eqref{defpseudotrue} and for every $n \in \N$, there exists
\begin{equation}\label{pseudotruewh}
\vt^\ast_n = \argmin_{\vt \in \Theta} \E \left[ \wh \LL ( \vt, Y^n) \right]
\end{equation}
 as well.
If $\Theta$ also satisfies the other parts of \autoref{as_D}, then  $\vt^\ast_n \to \vt^\ast$ as $n \to \infty$.
In particular, for  $n$ sufficiently large  $\vt^\ast_n$ is in the interior of $\Theta$ as well.
\item[(b)] Assume that the space $\Theta$ with associated family of continuous-time state space models \linebreak $(A_{\vt}, B_{\vt}, C_{\vt}, L_{\vt})_{\vt \in \Theta}$ satisfies Assumptions $\ref{as_D1}$ to $\ref{as_D5}$.
Then the strong law of large numbers
\begin{equation*}\label{SLLNlike}
 \wh \LL(\vt, Y^n)
 \to \mathscr{Q}(\vt) \quad 
 \p\text{-a.s.}
\end{equation*}
holds uniformly in $\vt$ as $n\to\infty$.
\item[(c)] Assume that the space $\Theta$ with associated family of continuous-time state space models \linebreak $(A_{\vt}, B_{\vt}, C_{\vt}, L_{\vt})_{\vt \in \Theta}$ satisfies \autoref{as_D}.
Then, as $n\to\infty$,
$$\sqrt{n}  \nabla_\vt \wh\LL( \vt^*, Y^n) \stackrel{\DD}{\to} \NN(0, \II(\vt^*)),$$
where $\II(\vt^*) = \lim_{n \to \infty} n \Var (\nabla_\vt \LL(\vt^*, Y^{n}))$.
\item[(d)] Assume that the space $\Theta$ with associated family of continuous-time state space models \linebreak $(A_{\vt}, B_{\vt}, C_{\vt}, L_{\vt})_{\vt \in \Theta}$ satisfies
Assumptions $\ref{as_D1}$ to $\ref{as_D5}$.
Then the convergence
\begin{equation*}\label{SLLNscnd}
\nabla^2_\vt \wh\LL( \vt, Y^n )
 {\to} \HH(\vt) \quad \p\text{-a.s.}
\end{equation*}
 holds uniformly in $\vartheta$ as $n\to\infty$, where $\HH(\vt):=\E \left[ \nabla^2_\vt l_{\vt, 1} \right]$.
\item[(e)] Assume that the space $\Theta$ with associated family of continuous-time state space models \linebreak $(A_{\vt}, B_{\vt}, C_{\vt}, L_{\vt})_{\vt \in \Theta}$ satisfies \autoref{as_D}. Then there exist $\epsilon, \alpha > 0$ such that for almost all $\omega$ and for every $n > n_1(\omega)$ and $\vt \in B_{\epsilon}(\vt^*) \cap \Theta$ we have
    $$\det \left(  \nabla^2_\vt \wh\LL( \vt, Y^n)(\omega)  \right) \geq \alpha.$$
\end{itemize}
\end{proposition}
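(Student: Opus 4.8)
The plan is to combine the uniform almost-sure convergence of the Hessian from part (d) with a continuity and positive-definiteness argument near the pseudo-true parameter. First I would note that by \autoref{helpprop}(d), $\nabla^2_\vt \wh\LL(\vt, Y^n) \to \HH(\vt)$ uniformly in $\vt$ $\p$-a.s., so it suffices to establish that $\det \HH(\vt^*) > 0$ and then transfer this to the empirical Hessian on a small ball. The identification of $\HH(\vt^*)$ with (a positive multiple of) the Fisher information matrix $\II(\vt^*)$ is the crucial link: in the standard QMLE theory one shows that the second-derivative matrix of the expected log-likelihood at the minimizer coincides with the Fisher information, and \ref{as_D11} asserts that the latter is non-singular. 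Since $\HH(\vt^*)$ is a Hessian of $\mathscr{Q}$ at an interior minimum (using \ref{as_D8}), it is positive semidefinite, and non-singularity upgrades this to positive definiteness, hence $\det \HH(\vt^*) =: 2\alpha_0 > 0$.

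Next I would use continuity. The map $\vt \mapsto \HH(\vt)$ is continuous on $\Theta$ — this follows from \ref{as_D4} (three times continuous differentiability of the coefficient maps), which via the smooth dependence of $K_\vt$, $V_\vt$ and the exponentially decaying series defining the pseudo-innovations makes $\vt \mapsto \E[\nabla^2_\vt l_{\vt,1}]$ continuous, together with dominated convergence justified by the moment bound \ref{as_D9}. Therefore there is an $\epsilon_0 > 0$ with $\det \HH(\vt) \geq \alpha_0$ for all $\vt \in \overline{B_{\epsilon_0}(\vt^*)} \cap \Theta$. Shrinking $\epsilon_0$ if necessary we may also assume $\overline{B_{\epsilon_0}(\vt^*)} \subset \Theta$ by \ref{as_D8}.

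Finally I would pass to the empirical object. Because $\det$ is a continuous (indeed polynomial) function of the matrix entries and $\nabla^2_\vt \wh\LL(\vt, Y^n) \to \HH(\vt)$ uniformly in $\vt \in \overline{B_{\epsilon_0}(\vt^*)} \cap \Theta$ $\p$-a.s., we get $\sup_{\vt \in B_{\epsilon_0}(\vt^*) \cap \Theta} \bigl| \det(\nabla^2_\vt \wh\LL(\vt, Y^n)) - \det \HH(\vt) \bigr| \to 0$ $\p$-a.s. Hence for almost all $\omega$ there is $n_1(\omega)$ such that for $n > n_1(\omega)$ this supremum is below $\alpha_0/2$, which yields $\det(\nabla^2_\vt \wh\LL(\vt, Y^n)(\omega)) \geq \alpha_0 - \alpha_0/2 = \alpha_0/2$ for every $\vt \in B_{\epsilon_0}(\vt^*) \cap \Theta$. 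Setting $\epsilon = \epsilon_0$ and $\alpha = \alpha_0/2$ completes the argument.

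The main obstacle I expect is the identification $\HH(\vt^*) \propto \II(\vt^*)$, i.e. making rigorous that the Hessian of the expected quasi-log-likelihood at the pseudo-true parameter is non-singular under \ref{as_D11}; in the misspecified setting one must be careful, since the usual information matrix equality $\HH = \II$ can fail, but what is actually needed here is only non-singularity of $\HH(\vt^*)$, which should follow from \ref{as_D11} combined with the fact that $\vt^*$ is an interior minimizer of $\mathscr{Q}$ (so $\HH(\vt^*) \succeq 0$) plus the structural relation between the curvature of $\mathscr{Q}$ and the score covariance established in the proof of \autoref{helpprop}(c). A secondary technical point is justifying the continuity of $\vt \mapsto \HH(\vt)$ and the interchange of limit and determinant uniformly, but these are routine given \ref{as_D4} and \ref{as_D9} and the exponential decay of the Kalman filter coefficients already exploited in parts (b) and (d).
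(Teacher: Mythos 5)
Your argument addresses only part (e) and takes (a)--(d) as given, which is consistent with the paper, where those parts are disposed of by citation to \cite{sinwhite} and \cite{schlemmstelzer}. For part (e) your proof is correct and follows essentially the same route as the paper's: Assumption \ref{as_D11} is read there directly as non-singularity of $\HH(\vt^*)=\E[\nabla^2_\vt l_{\vt^*,1}]$ (so the ``identification obstacle'' you flag does not actually arise), continuity of $\vt\mapsto\HH(\vt)$ under \ref{as_D4} gives a uniform lower bound on the determinant near $\vt^*$, and the uniform a.s.\ convergence from part (d) transfers that bound to the empirical Hessian.
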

\begin{proof} $\mbox{}$
(a) The existence statements follow directly from \cite[Proposition 3.1]{sinwhite}. The convergence $\vt^\ast_n \rightarrow \vt^\ast$ follows from \Cref{helemma}(d).\\
(b) This is exactly  \cite[Lemma 2.8]{schlemmstelzer} taking  \cite[Lemma 3.14]{schlemmstelzer} into account.\\
(c) Note that 
 under \autoref{as_D} we have
$\nabla_\vt\mathcal{Q}(\vartheta)\big|_{\vt = \vt^*}=\nabla_\vt \E \left[ \LL(\vt, Y^n) \right] \big|_{\vt = \vt^*} = 0.$
 Next, we use the dominated convergence theorem to interchange the expectation and derivation, giving
\begin{equation}\label{firstdevexp}
\E \left[ \nabla_\vt \LL(\vt, Y^n)  \right] \big|_{\vt = \vt^*} = 0.
\end{equation}
This rest of the proof can  be carried out as  \cite[Lemma 2.16]{schlemmstelzer}.\\
(d) The pointwise convergence can be proved as in \cite[Lemma 2.17]{schlemmstelzer}, respectively \cite[Lemma 2 and Lemma 3]{boubafrancq}
taking  \cite[Lemma 3.14]{schlemmstelzer} into account. The stronger statement of uniform convergence  can be shown
by using the compactness of the parameter space analogous to the proof of \cite[Lemma 2.16]{schlemmstelzer}, respectively
\cite[Theorem 16]{Ferguson}.
\\
(e) Assumption $\ref{as_D11}$ says that the Fisher information matrix $\E \left[ \nabla^2_\vt l_{\vt^*, 1} \right]$ is invertible
and hence, $\det(\E \left[ \nabla^2_\vt l_{\vt^*, 1} \right])>0$. Moreover, by Assumption \ref{as_D4} the map $\vt \mapsto \E \left[ \nabla^2_\vt l_{\vt, 1} \right]$
is continuous. Thus, there exist $\epsilon, \alpha >0$ such that $\inf_{\vt\in B_\epsilon(\vt^*)\cap\Theta}\det(\E \left[ \nabla^2_\vt l_{\vt, 1} \right])> \alpha$.
Since by (d) as $n\to\infty$,
$$\sup_{\vt\in B_\epsilon(\vt^*)\cap\Theta}\|\nabla^2_\vt \wh\LL( \vt, Y^n ) -\E \left[ \nabla^2_\vt l_{\vt, 1} \right]\|\to 0  \quad \p\text{-a.s.},$$
we finally get $\lim_{n \to \infty} \inf_{\vt\in B_\epsilon(\vt^*)\cap\Theta}\det(\nabla^2_\vt \wh \LL( \vt, Y^n ))> \alpha$ $\p$-a.s.
\end{proof}

We can now state the desired central limit theorem, which basically combines \cite[Proposition 4.1]{sinwhite} and \cite[Theorem 3.16]{schlemmstelzer}.
\begin{theorem}\label{idprop} \strut
Assume that the space $\Theta$ with associated family of continuous-time state space models $(A_{\vt}, B_{\vt}, C_{\vt}, L_{\vt})_{\vt \in \Theta}$ satisfies \autoref{as_D}. Then, as $n\to\infty$,
\begin{eqnarray*}
    \widehat{\vt}^n \to \vt^* \quad \Pas,
\end{eqnarray*}
and
$$\sqrt{n} \left( \widehat{\vt}^n - \vt^* \right) \stackrel{\DD}{\to} \NN(0, {\HH}^{-1}(\vt^*) \II(\vt^*) {\HH}^{-1}(\vt^*)),$$
where
\begin{equation}\label{vtcovmiss}
\II(\vt^*) = \lim_{n \to \infty} n \Var (\nabla_\vt \LL(\vt^*, Y^{n})) \quad \text{  and  } \quad\HH(\vt^*) = \lim_{n \to \infty}  \nabla^2_\vt \LL(\vt^*, Y^{n}).
\end{equation}
\end{theorem}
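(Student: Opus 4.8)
The plan is to derive both statements from the auxiliary results in \Cref{helpprop} together with the abstract consistency and asymptotic-normality framework of \cite{sinwhite}. For the consistency statement $\wh\vt^n\to\vt^*$ $\Pas$, I would combine part (b) of \Cref{helpprop}, which gives the uniform strong law $\wh\LL(\vt,Y^n)\to\mathscr{Q}(\vt)$ over $\Theta$, with the identifiable-uniqueness Assumption \ref{as_D10}, which makes $\vt^*$ the well-separated unique minimizer of $\mathscr{Q}$ on the compact set $\Theta$ (Assumption \ref{as_D1}). The standard argmax/argmin continuity argument (as in \cite[Proposition~3.1 and its corollaries]{sinwhite}) then forces the minimizer $\wh\vt^n$ of $\wh\LL(\cdot,Y^n)$ to converge to $\vt^*$ almost surely; one small point to check is that the approximation error between $\wh\LL$ and $\LL$ coming from the finite-sample initialization of the Kalman filter is asymptotically negligible uniformly in $\vt$, but this is exactly what is packaged into \Cref{helpprop}(b) via \cite[Lemma~3.14]{schlemmstelzer}.

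For the central limit theorem I would run the usual Taylor-expansion (delta-method) argument around $\vt^*$. Since $\wh\vt^n$ minimizes $\wh\LL(\cdot,Y^n)$ and, by the consistency just established together with Assumption \ref{as_D8}, $\wh\vt^n$ lies in the interior of $\Theta$ for $n$ large, the first-order condition $\nabla_\vt\wh\LL(\wh\vt^n,Y^n)=0$ holds eventually. A mean-value expansion of each coordinate of $\nabla_\vt\wh\LL$ about $\vt^*$ gives
\begin{equation*}
0=\nabla_\vt\wh\LL(\vt^*,Y^n)+\Big(\int_0^1\nabla^2_\vt\wh\LL\big(\vt^*+u(\wh\vt^n-\vt^*),Y^n\big)\,\dd u\Big)(\wh\vt^n-\vt^*),
\end{equation*}
or the coordinatewise mean-value form with intermediate points on the segment $[\vt^*,\wh\vt^n]$. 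By \Cref{helpprop}(d) the Hessian converges uniformly $\Pas$ to the continuous map $\vt\mapsto\HH(\vt)$, and since $\wh\vt^n\to\vt^*$ the averaged/intermediate Hessian converges to $\HH(\vt^*)$; by \Cref{helpprop}(e) this limiting matrix is invertible (its determinant is bounded below by $\alpha>0$), so the Hessian block is invertible for $n$ large and its inverse converges to $\HH^{-1}(\vt^*)$. Rearranging yields
\begin{equation*}
\sqrt{n}(\wh\vt^n-\vt^*)=-\Big(\int_0^1\nabla^2_\vt\wh\LL(\cdots,Y^n)\,\dd u\Big)^{-1}\sqrt{n}\,\nabla_\vt\wh\LL(\vt^*,Y^n).
\end{equation*}
By \Cref{helpprop}(c), $\sqrt{n}\,\nabla_\vt\wh\LL(\vt^*,Y^n)\stackrel{\DD}{\to}\NN(0,\II(\vt^*))$, and Slutsky's theorem applied to the product with the converging random matrix $-\HH^{-1}(\vt^*)$ gives $\sqrt{n}(\wh\vt^n-\vt^*)\stackrel{\DD}{\to}\NN\big(0,\HH^{-1}(\vt^*)\II(\vt^*)\HH^{-1}(\vt^*)\big)$, using that $\HH(\vt^*)$ is symmetric. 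Identifying the limits in \eqref{vtcovmiss} with those appearing in \Cref{helpprop}(c),(d) closes the argument.

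The main obstacle is not the final assembly but justifying that the gradient of the \emph{approximate} quasi log-likelihood, built from the finitely-initialized Kalman recursion, behaves to first order like the gradient of the idealized log-likelihood $\LL$ based on the full infinite history: one needs that $\sqrt{n}\big(\nabla_\vt\wh\LL(\vt^*,Y^n)-\nabla_\vt\LL(\vt^*,Y^n)\big)\stackrel{\p}{\to}0$, which relies on the exponential decay of $(\e^{A_\vt h}-K_\vt C_\vt)^{j}$ (hence geometric forgetting of the initialization) uniformly on $\Theta$, guaranteed under Assumptions \ref{as_D1}--\ref{as_D5} and used in \cite{schlemmstelzer}. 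The other delicate point, already absorbed into \Cref{helpprop}(c), is that the asymptotic variance $\II(\vt^*)$ exists: this requires summability of the autocovariances of the stationary sequence $(\nabla_\vt l_{\vt^*,k})_{k\in\Z}$ and a martingale/mixing-type CLT, for which Assumption \ref{as_D9} (the $4+\delta$ moment on $L$) is exactly what provides the needed integrability. Once these facts are taken from the cited lemmas, the theorem follows by the routine steps above.
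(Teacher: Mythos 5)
Your proposal is correct and follows essentially the same route as the paper, which simply defers to the Taylor-expansion/uniform-SLLN argument of \cite[Theorems 2.4, 2.5 and 3.16]{schlemmstelzer} with $\vt_0$ replaced by $\vt^*$ and notes, as you do, that the identifiable-uniqueness Assumption \ref{as_D10} supplies the unique well-separated minimizer needed for consistency in the misspecified case. The ingredients you assemble (parts (b)--(e) of \Cref{helpprop}, the negligibility of the Kalman-filter initialization via \Cref{helemma}, and Slutsky) are exactly those packaged into the cited results.
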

\begin{proof}
The proof can be carried out in the same way as
\cite[Theorem 3.16, Theorem 2.4 and Theorem 2.5]{schlemmstelzer}, respectively, replacing $\vt_0$ by $\vt^*$ wherever it appears. Note that we have the additional assumption $\ref{as_D10}$ concerning identifiable uniqueness, which ensures that the estimator converges to a unique limit, see also \cite[Theorem 3.4]{white1996}.
\end{proof}
\begin{remark} $\mbox{}$\\
(a) \, For the strong consistency part of the theorem, Assumption $\ref{as_D3}$ can be relaxed requiring only continuity instead of three times differentiability.\\
(b) \, In the case that we are in a correctly specified parameter space, this theorem corresponds exactly to \cite[Theorem 3.16]{schlemmstelzer}.
\end{remark}

\subsection{Law of the iterated logarithm} \label{Section:3.5}
This section is devoted to the development of various forms of the law of the iterated logarithm which we  need to study the consistency properties of the information criteria. In the following proposition we start by establishing a law of the iterated logarithm for linear combinations of partial derivatives of the quasi log-likelihood function.
\begin{proposition}\label{likelihoodlil}
Assume that the space $\Theta$ with associated family of continuous-time state space models $(A_{\vt}, B_{\vt}, C_{\vt}, L_{\vt})_{\vt \in \Theta}$
satisfies \autoref{as_D}.
Then, for every $x \in \R^{N(\Theta)}\setminus\{0_{N(\Theta)}\}$
 it holds that
\begin{equation*}
\begin{split}\label{1devlimsup}
\limsup_{n \to \infty}  \frac{\sqrt{n}}{\sqrt{ \log(\log(n))}} x^T \nabla_\vt \wh\LL( \vt^*, Y^n)  &=  \sqrt{2\cdot x^T \II(\vt^\ast) x} \quad \Pas, \\
\liminf_{n \to \infty}  \frac{\sqrt{n}}{\sqrt{ \log(\log(n))}} x^T \nabla_\vt \wh\LL( \vt^*, Y^n)   &= -\sqrt{2\cdot x^T \II(\vt^\ast) x} \quad \Pas
\end{split}
\end{equation*}
\end{proposition}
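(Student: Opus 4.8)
The plan is to pass from the quasi log-likelihood $\wh\LL$ to its exact pseudo-innovation version, to recognise the resulting summands as a stationary, centred, geometrically weakly dependent sequence, and then to prove the law of the iterated logarithm for its partial sums via a martingale approximation followed by the martingale law of the iterated logarithm. \textbf{Step 1 (reduction to exact pseudo-innovations).} Writing out \eqref{emploglike}, $x^T\nabla_\vt\wh\LL(\vt^*,Y^n)$ is $\tfrac1n$ times the sum over $k=1,\dots,n$ of the $\vt$-gradient of $d\log(2\pi)+\log\det V_\vt+\he_{\vt,k}^TV_\vt^{-1}\he_{\vt,k}$ at $\vt^*$. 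The approximate pseudo-innovation $\he_{\vt^*,k}$ and the pseudo-innovation $\epsilon_{\vt^*,k}$ differ only through the initialisation of the Kalman recursion at $k=1$: the difference of the filtered states equals $(\e^{A_{\vt^*}h}-K_{\vt^*}C_{\vt^*})^{k-1}$ applied to one fixed random vector, so $\|\he_{\vt^*,k}-\epsilon_{\vt^*,k}\|\le C\rho^{k}$ for some $\rho\in(0,1)$ and a random constant with a finite moment of order $4+\delta$ by \ref{as_D9}; differentiating in $\vt$ only adds a polynomial factor in $k$, which does not affect summability. Hence the $k$-th summand of $x^T\nabla_\vt\wh\LL$ differs from $x^T\nabla_\vt l_{\vt^*,k}$ by a term bounded by $C\rho^{k}Z_k$ with $(Z_k)$ stationary and $\E|Z_1|<\infty$, so that $\sum_{k\ge1}\rho^{k}Z_k<\infty$ $\Pas$ by Tonelli, and therefore
\[
x^T\nabla_\vt\wh\LL(\vt^*,Y^n)=\tfrac1n S_n+O_{\text{a.s.}}(1/n),\qquad S_n:=\sum_{k=1}^{n}x^T\nabla_\vt l_{\vt^*,k}.
\]
Since $1/n=o\big(\sqrt{\log\log n/n}\,\big)$, it suffices to prove
\[
\limsup_{n\to\infty}\frac{S_n}{\sqrt{n\log\log n}}=\sqrt{2\,x^T\II(\vt^*)x}\quad\Pas,
\]
together with the same identity for $\liminf$ with the sign reversed.

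\textbf{Step 2 (structure of the summands).} Put $N_{h,j}=\int_{(j-1)h}^{jh}\e^{A(jh-t)}B\,\dd L(t)$; by \eqref{eq2.5} the sampled process $(Y(mh))_{m\in\Z}$ is a causal moving average of the i.i.d.\ sequence $(N_{h,j})_{j\in\Z}$ with geometrically decaying coefficients, and by \eqref{kalman3} the pseudo-innovation $\epsilon_{\vt^*,k}$ and its $\vt$-gradient are measurable functionals of $(N_{h,j})_{j\le k-1}$. Thus $(x^T\nabla_\vt l_{\vt^*,k})_{k\in\Z}$, being a measurable image of an i.i.d.\ sequence, is stationary and ergodic. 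By \ref{as_D8} and \eqref{firstdevexp} it is centred; and since $l_{\vt,k}$ is a quadratic form in $\epsilon_{\vt,k}$ and $\nabla_\vt\epsilon_{\vt,k}$, which are linear functionals of the $L$-increments, \ref{as_D9} gives $\E|x^T\nabla_\vt l_{\vt^*,k}|^{2+\delta/2}<\infty$, a finite moment of order strictly larger than $2$. Writing $\FF_k:=\sigma(N_{h,j}:j\le k)$, the influence of $N_{h,j}$ on $\epsilon_{\vt^*,k}$ and $\nabla_\vt\epsilon_{\vt^*,k}$ decays geometrically in $k-j$ because it is carried by powers of $\e^{A_{\vt^*}h}-K_{\vt^*}C_{\vt^*}$ and of $\e^{A_{\vt^*}h}$, both of spectral radius $<1$; decomposing $\epsilon_{\vt^*,k}=\E[\epsilon_{\vt^*,k}\mid\FF_0]+(\epsilon_{\vt^*,k}-\E[\epsilon_{\vt^*,k}\mid\FF_0])$ and treating the linear and quadratic parts of $l_{\vt^*,k}$ separately, one obtains $\|\E[x^T\nabla_\vt l_{\vt^*,k}\mid\FF_0]\|_{L^2}\le C\rho^{k}$, in particular $\sum_{k\ge1}\|\E[x^T\nabla_\vt l_{\vt^*,k}\mid\FF_0]\|_{L^2}<\infty$.

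\textbf{Step 3 (martingale approximation and the martingale LIL).} The summability from Step 2 is Gordin's condition, so there is a decomposition $x^T\nabla_\vt l_{\vt^*,k}=D_k+g_k-g_{k+1}$ with $(D_k)_{k\in\Z}$ a stationary ergodic martingale difference sequence for $(\FF_k)_{k\in\Z}$ and $(g_k)_{k\in\Z}$ stationary in $L^2$; telescoping gives $S_n=M_n+g_1-g_{n+1}$, $M_n:=\sum_{k=1}^{n}D_k$. From $\E g_1^2<\infty$ one has $\sum_{n\ge1}\P(|g_n|>\varepsilon\sqrt n)=\sum_{n\ge1}\P(g_1^2>\varepsilon^2 n)<\infty$ for every $\varepsilon>0$, so $g_n/\sqrt n\to0$ $\Pas$ and $S_n=M_n+o_{\text{a.s.}}(\sqrt n)$. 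Moreover $\tfrac1n\Var(S_n)\to\E D_1^2=:\sigma^2$, and comparison with $\II(\vt^*)=\lim_n n\Var(\nabla_\vt\LL(\vt^*,Y^n))$ from \eqref{vtcovmiss} identifies $\sigma^2=x^T\II(\vt^*)x$. If $\sigma^2=0$, then $D_1=0$ $\Pas$, $M_n\equiv0$, and the claim follows from $S_n=o_{\text{a.s.}}(\sqrt n)$; otherwise $(D_k)$ is a stationary ergodic martingale difference sequence with $\E|D_1|^{2+\delta/2}<\infty$, hence $\tfrac1n\sum_{k=1}^{n}\E[D_k^2\mid\FF_{k-1}]\to\sigma^2$ $\Pas$ by the ergodic theorem and the conditional Lindeberg and summability hypotheses of the martingale law of the iterated logarithm (Stout; Heyde and Scott) are satisfied, giving $\limsup_n M_n/\sqrt{2\sigma^2 n\log\log n}=1$ and $\liminf_n M_n/\sqrt{2\sigma^2 n\log\log n}=-1$ $\Pas$ Combined with Steps~1 and~2 this proves the proposition. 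Alternatively, since the sequence is a causal functional of an i.i.d.\ sequence with geometrically decaying dependence, one may invoke an almost sure invariance principle and read off the LIL from that of Brownian motion.

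\textbf{Expected main obstacle.} The delicate points are: (i) carrying out the martingale approximation and the martingale LIL within the exact moment budget $\E\|L(1)\|^{4+\delta}<\infty$ of \ref{as_D9} — this is precisely why $4+\delta$ rather than $4$ moments are imposed, as one needs slightly more than a finite score variance for the conditional Lindeberg step; (ii) the bookkeeping in Step~1 that makes the passage from $\wh\LL$ to $\LL$ cost only $O_{\text{a.s.}}(1/n)$, which rests on the geometric filter-memory estimates of \cite{schlemmstelzer}; and (iii) verifying that the limiting constant is exactly $x^T\II(\vt^*)x$, which requires matching the martingale-approximation variance with the definition of $\II(\vt^*)$ in \eqref{vtcovmiss} and not with the Hessian $\HH(\vt^*)$.
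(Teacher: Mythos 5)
Your proof is correct in substance, but it takes a genuinely different route from the paper. The paper also first reduces to the exact pseudo-innovations and the stationary summands $x^T\nabla_\vt l_{\vt^*,k}=f(Y(kh),Y((k-1)h),\dots)$ via the moving-average representations of \Cref{serieslemma}, but it then verifies the three hypotheses of the law of the iterated logarithm for weakly dependent functionals in \cite[Theorem 8]{oodaira}: a $(2+\delta_1)$-moment bound on the score, an $O(m^{-2-\delta_2})$ bound in $L^2$ for the error of approximating the score by a finite-window functional of $Y((k-m)h),\dots,Y((k+m)h)$ (taken from step 2 of the proof of \cite[Lemma 2.16]{schlemmstelzer}), and summability of a power of the strong mixing coefficients of $(Y(kh))_{k\in\Z}$, which follows from the exponential mixing of MCARMA processes in \cite[Proposition 3.34]{marquardtstelzer}. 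You instead use a Gordin-type martingale approximation with respect to the noise filtration $\FF_k=\sigma(N_{h,j}:j\le k)$ followed by the Heyde--Scott/Stout martingale LIL for stationary ergodic martingale differences; this avoids mixing coefficients altogether and, for the LIL step itself, needs only a finite second moment of $D_1$, whereas the identification $\sigma^2=x^T\II(\vt^*)x$ and the coboundary control come for free from stationarity and $\E g_1^2<\infty$. The price is that the key estimate $\|\E[x^T\nabla_\vt l_{\vt^*,k}\mid\FF_0]\|_{L^2}\le C\rho^k$ has to be established by hand for the quadratic part of the score (split $Y(mh)$ into its $\FF_0$-measurable prediction and the independent remainder; this produces polynomial-times-geometric bounds and uses the fourth moments of $Y$ guaranteed by \ref{as_D9}), which is essentially the same bookkeeping the paper borrows from \cite[Lemma 2.16]{schlemmstelzer} for its condition (b). Two further points in your favour: your Step 1 gives an almost sure $O_{\text{a.s.}}(1/n)$ transfer from $\LL$ to $\wh\LL$, which is exactly what an a.s.\ limsup statement requires (the paper invokes \Cref{helemma}(b), which as stated is only an in-probability bound), and your separate treatment of the degenerate case $\sigma^2=0$ is harmless since the claimed limit is then $0$; the paper instead notes that $x^T\II(\vt^*)x$ is strictly positive.
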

\begin{proof}
Let $x \in \R^{N(\Theta)}\setminus\{0_{N(\Theta)}\}$.
First, it can be deduced that  $x^T \II(\vt^\ast) x$ is finite and positive from \cite[Lemma 2.16]{schlemmstelzer}.
Moreover, by \cite[Eq. (2.24)]{schlemmstelzer} the representation
\begin{equation}\label{partdevl}
\partial_i l_{\vt^*, k} = \tr \left( V^{-1}_{\vt^*} \left( I_{d \times d} - \epsilon_{\vt^*, k} \epsilon^T_{\vt^*, k} V^{-1}_{\vt^*} \right) \partial_i V_{\vt^*} \right) + 2 \partial_i \epsilon^T_{\vt^*, k} V^{-1}_{\vt^*} \epsilon_{\vt^*, k}
\end{equation}
holds. By \Cref{serieslemma} we know that both the pseudo-innovations and their partial derivatives can be expressed as moving averages of the true output process via
\begin{equation}\label{seriesrep4}
\epsilon_{\vt^*,k} = \sum_{\nu = 0}^{\infty} c_{\vt^*, \nu} Y ((k-\nu)h), \quad \quad
\partial_i \epsilon_{\vt^*,k} = \sum_{\nu = 0}^{\infty} c^{(i)}_{\vt^*, \nu} Y((k-\nu)h)
\end{equation}
and the inequalities $\sup_{\vt \in \Theta} \| c_{\vt, \nu} \| \leq C \rho^\nu$ and $\sup_{\vt \in \Theta} \| c^{(i)}_{\vt, \nu} \| \leq {C} {\rho}^\nu$ are satisfied for some \linebreak $C > 0$ and $\rho \in (0,1)$ for $i \in \{ 1, \ldots, N(\Theta) \}$. Thus, $x^T \nabla_\vt l_{\vt^*, k} = \sum_{i=1}^{N(\Theta)} x_i \partial_i l_{\vt^\ast, k}$ can be written as $f(Y(kh), Y((k-1)h), \ldots)$ for a suitable function $f$.\\
The aim is now to apply the law of the iterated logarithm  for dependent random variables
as it's given in \cite[Theorem 8]{oodaira}, for which we need to check the following three conditions:
\begin{itemize}
\item[(a)] $\E \left[x^T \nabla_\vt l_{\vt^*, k} \right] = 0$ and $\E  \left| x^T \nabla_\vt l_{\vt^*, k} \right|^{2+\delta_1}  < \infty$ for some $\delta_1 > 0$.
\item[(b)] $\E \left[ \left| x^T \nabla_\vt l_{\vt^*, k} - \E \left[ x^T \nabla_\vt l_{\vt^*, k} \mid \sigma\left( Y(( k-m)h), \ldots, Y(kh), \ldots, Y((k+m)h) \right) \right] \right|^2 \right]$ \linebreak
$= O(m^{-2-\delta_2})$ for some $\delta_2 > 0$ and $m \in \N$. 
\item[(c)] $\sum_{k=1}^{\infty} \alpha_{Y^{(h)}}(k)^{\frac{\delta_3}{2+\delta_3}} < \infty$ for some $0 < \delta_3 < \delta_1$, where $( \alpha_{Y^{(h)}}(k) )_{k \in \Z}$ denotes the strong mixing coefficients of the process $( Y(k h))_{k \in \Z}$.
\end{itemize}
 (a)\, We start with the first condition. For the first part it follows as in \eqref{firstdevexp} that $\E \left[ \partial_i l_{\vt^\ast, k} \right] = 0$ for every $i\in\{1,\ldots,N(\Theta)\}$, hence $\E \left[x^T \nabla_\vt l_{\vt^*, k} \right] = 0$. For the second part, for any $i\in\{1,\ldots,N(\Theta)\}$ we employ   $\eqref{partdevl}$ and the Cauchy-Schwarz inequality to obtain
\begin{eqnarray*}
\E  \left| \partial_i l_{\vt^*, k} \right|^{2+\delta_1} &\leq& C \E \left| \tr \left( V^{-1}_{\vt^*} \epsilon_{\vt^*, k} \epsilon^T_{\vt^*, k} V^{-1}_{\vt^*} \partial_i V_{\vt^*} \right) \right|^{2+\delta_1} + C \E \left| \partial_i \epsilon^T_{\vt^*, k} V^{-1}_{\vt} \epsilon_{\vt^*,k} \right|^{2+\delta_1} \\
&\leq & C \left( \E  \| \epsilon_{\vt^*, k} \|^{4+ 2 \delta_1}  + \left( \E \| \epsilon_{\vt^*, k} \|^{4+2 \delta_1} \E  \| \partial_i \epsilon_{\vt^*, k} \|^{4+2 \delta_1}  \right)^{\frac12} \right) ,
\end{eqnarray*}
where we have used the  the compactness of $\Theta$ in the last line. From Assumption $\ref{as_D9}$  we know that the driving L\'{e}vy process $L$
 of $Y$ has finite $(4+\delta)$th moment for some $\delta > 0$, which carries over to the $(4+\delta)$th  moment of $Y(kh)$, $k\in\Z$, and hence to $\epsilon_{\vt^*, k}$ and $\partial_i \epsilon_{\vt^*, k}$. With this, we obtain that
the right-hand side is finite if $\delta_1 < \frac{\delta}{2}$.
Since $i\in\{1,\ldots,N(\Theta)\}$ is arbitrary and $x^T \nabla_\vt l_{\vt^*, k}$ is a linear combination of those components,
 we get $\E  \left| x^T \nabla_\vt l_{\vt^*, k} \right|^{2+\delta_1}  < \infty$.\\
(b) \, For the second condition, we begin by decomposing the partial derivative as in the proof of \cite[Lemma 2.16]{schlemmstelzer}. For $m \in \N$ we write
$$\partial_i l_{\vt^*, k} = Y^{(i)}_{m,k} - \E \left[ Y^{(i)}_{m,k} \right] + Z^{(i)}_{m,k} - \E \left[ Z^{(i)}_{m,k} \right],$$
where
\begin{eqnarray*}
Y^{(i)}_{m,k} &=& \tr\left( V^{-1}_{\vt^*} \partial_i V_{\vt^*} \right) + \sum_{\nu, \nu'=0}^{m} \left( -\tr \left( V^{-1}_{\vt^*} c_{\vt^*, \nu} Y((k - \nu)h) Y^{T}_{\vt_0}(( k - \nu')h) c^{T}_{\vt^*, \nu'} V^{-1}_{\vt^*} \partial_i V_{\vt^*} \right) \right. \\
             &&\hspace*{4.3cm}+ \left. 2 Y^{T}_{\vt_0}(( k-\nu)h) c^{(i),T}_{\vt^*, \nu} V^{-1}_{\vt^*} c_{\vt^*, \nu'} Y(( k - \nu')h) \right), \\
Z^{(i)}_{m,k} &=& \partial_i l_{\vt^*, k} - Y^{(i)}_{m,k}.
\end{eqnarray*}
Hence, we obtain
\begin{align*}
&\E \left[ \left| x^T \nabla_\vt l_{\vt^*, k} - \E \left[\left. x^T \nabla_\vt l_{\vt^*, k} \right| \sigma\left( Y(( k-m)h), \ldots, Y(kh), \ldots, Y((k+m)h) \right) \right] \right|^2 \right] \notag \\
& \leq \E  \left[ \left| \sum_{i=1}^{N(\Theta)} x_i Z^{(i)}_{m,k} - \E \left[ \sum_{i=1}^{N(\Theta)} x_i Z^{(i)}_{m,k} \right] \right|^2 \right] 
= \sum_{i=1}^{N(\Theta)} x^2_i \Var ( Z^{(i)}_{m,k} ) + 2  \sum_{\substack{i,j=1 \\ i \neq j}}^{N(\Theta)} x_i x_j \Cov( Z^{(i)}_{m,k}, Z^{(j)}_{m,k}).
\end{align*}
 From step $2$ of the proof of \cite[Lemma 2.16]{schlemmstelzer} we know that $\Cov(Z^{(i)}_{m,k}, Z^{(j)}_{m,k}) \leq {C} {\rho}^m$ for a positive constant ${C}$ and ${\rho} \in (0, 1)$, and every $i, j \in \{ 1, \ldots, N(\Theta)\}$. Thus, the second condition is satisfied as well.\\
 (c)\, Lastly, we turn to the third condition. By \cite[Proposition 3.34]{marquardtstelzer} the strong mixing coefficients $\alpha_{Y}(t)$ of $(Y(t))_{t \in \R}$ are $O(\e^{-at})$ for some $a > 0$, which carries over to those of the sampled process $(Y(kh))_{k \in \Z}$. Thus, we can choose $\delta_3 < \delta_1 < \frac{\delta}{2}$ to obtain $\sum_{k=1}^{\infty} \alpha_{Y^{(h)}}(k)^{\frac{\delta_3}{2+\delta_3}} < \infty$ as desired.

Then a consequence of (a)-(c) and \cite[Theorem 8]{oodaira} is the law of the iterated logarithm
\begin{eqnarray*}
\limsup_{n \to \infty} \frac{ \left| \sum_{k=1}^{n} (\sum_{i=1}^{N(\Theta)} x_i \partial_i l_{\vt^{*},k}) \right|}{\sqrt{ 2 n x^T \II(\vt^\ast) x \log(\log(n x^T \II(\vt^\ast) x))}}
= 1 \quad \p\text{-a.s.}
\end{eqnarray*}
Since $\log(\log(n x^T \II(\vt^\ast) x )) = O(\log(\log(n)))$ we can therefore  deduce  the statement  by symmetry (the driving L\'{e}vy process has
expectation $0_s$) for $\mathcal{L}$.
Finally, by \Cref{helemma}(b) we can transfer the result to $\wh{\mathcal{L}}$ as well.
\end{proof}

The next theorem builds upon this to derive a multivariate version of the law of the iterated logarithm.
\begin{theorem}\label{LLmult}
Assume that the space $\Theta$ with associated family of continuous-time state space models $(A_{\vt}, B_{\vt}, C_{\vt}, L_{\vt})_{\vt \in \Theta}$
satisfies \autoref{as_D}. Moreover, let $\Xi \in \R^{N(\Theta) \times N(\Theta)}$ be an arbitrary matrix. Then it holds that
\begin{equation*}\label{LLmultlim}
\limsup_{n \to \infty} \frac{\sqrt{n}}{\sqrt{ \log(\log(n))}} \| \Xi \nabla_\vt \widehat{\LL}(\vt^\ast, Y^n) \|
=  \sqrt{2\cdot\lambda_{\text{max}}(\Xi \II(\vt^\ast) \Xi^T)} \quad \Pas
\end{equation*}
\end{theorem}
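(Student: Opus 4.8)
The plan is to deduce the multivariate law of the iterated logarithm from the scalar version in \Cref{likelihoodlil} by a standard projection argument on the sphere, exploiting the finite dimension of $\R^{N(\Theta)}$. Write $v_n := \sqrt{n}/\sqrt{\log\log n}\,\nabla_\vt\wh\LL(\vt^*,Y^n)\in\R^{N(\Theta)}$, so that we must show $\limsup_{n\to\infty}\|\Xi v_n\| = \sqrt{2\lambda_{\max}(\Xi\II(\vt^*)\Xi^T)}$ almost surely. For the lower bound, let $u$ be a unit eigenvector of the symmetric positive semidefinite matrix $\Xi\II(\vt^*)\Xi^T$ associated with $\lambda_{\max}$. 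Then $\|\Xi v_n\| \geq |u^T\Xi v_n| = |(\Xi^T u)^T v_n|$, and applying \Cref{likelihoodlil} with $x = \Xi^T u$ (note $\Xi^T u \neq 0$ unless $\lambda_{\max}=0$, the trivial case) gives $\limsup_n |(\Xi^T u)^T v_n| = \sqrt{2\,u^T\Xi\II(\vt^*)\Xi^T u} = \sqrt{2\lambda_{\max}(\Xi\II(\vt^*)\Xi^T)}$, which is exactly the required lower bound.

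For the upper bound I would use a covering/compactness argument. Fix $\eta>0$ and choose a finite $\eta$-net $\{y_1,\dots,y_M\}$ of the unit sphere $S^{N(\Theta)-1}$. For each $j$, \Cref{likelihoodlil} applied to $x = \Xi^T y_j$ yields $\limsup_n |y_j^T\Xi v_n| = \sqrt{2\,y_j^T\Xi\II(\vt^*)\Xi^T y_j} \leq \sqrt{2\lambda_{\max}(\Xi\II(\vt^*)\Xi^T)}$, $\p$-a.s.; since the net is finite, on a single almost-sure event we have $\limsup_n \max_{j}|y_j^T\Xi v_n| \leq \sqrt{2\lambda_{\max}(\Xi\II(\vt^*)\Xi^T)}$. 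To pass from the net to the full supremum over the sphere, one writes $\|\Xi v_n\| = \sup_{\|y\|=1} y^T\Xi v_n$ and, for arbitrary unit $y$, picks $y_j$ with $\|y-y_j\|\le\eta$, so $y^T\Xi v_n \le y_j^T\Xi v_n + \eta\|\Xi v_n\| \le y_j^T\Xi v_n + \eta\|\Xi\|\,\|v_n\|$. Taking $\limsup$ and noting from \Cref{likelihoodlil} (applied coordinatewise) that $\limsup_n\|v_n\|<\infty$ $\p$-a.s., we get $\limsup_n\|\Xi v_n\| \le \sqrt{2\lambda_{\max}(\Xi\II(\vt^*)\Xi^T)} + \eta\,\|\Xi\|\,C$ for an a.s.-finite constant $C$. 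Letting $\eta\downarrow0$ along a countable sequence finishes the upper bound.

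The main obstacle is the interchange of the $\limsup$ with the supremum over the (uncountable) unit sphere: \Cref{likelihoodlil} only provides the limit behaviour along each \emph{fixed} direction and on a direction-dependent null set, so one cannot directly invoke it for the random maximizing direction. This is precisely what the finite-net argument circumvents, at the cost of the auxiliary fact $\limsup_n\|v_n\|<\infty$ a.s. — which itself follows by applying \Cref{likelihoodlil} to the $N(\Theta)$ standard basis vectors and using $\|v_n\|\le\sum_i|e_i^Tv_n|$. A minor point to handle cleanly is the degenerate case $\lambda_{\max}(\Xi\II(\vt^*)\Xi^T)=0$, i.e. $\Xi\II(\vt^*)\Xi^T=0$; then the upper-bound argument still gives $\limsup_n\|\Xi v_n\|=0$ directly (every $y_j^T\Xi\II(\vt^*)\Xi^T y_j = 0$), so the identity holds trivially. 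Combining the matching bounds yields the claimed almost-sure identity.
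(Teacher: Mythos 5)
Your proposal is correct and follows essentially the same route as the paper: both reduce the claim to the scalar law of the iterated logarithm of \Cref{likelihoodlil} applied to $x^T\Xi$ and then interchange the $\limsup$ with the supremum over the unit sphere. The paper outsources that interchange to the argument in Finkelstein's Lemma~2, which is precisely the finite-net compactness argument (including the a.s.\ boundedness of $\sqrt{n/\log\log n}\,\nabla_\vt\wh\LL(\vt^*,Y^n)$) that you write out explicitly.
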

\begin{proof}
First, since $\II(\vt^\ast) = \lim_{n \to \infty} n \Var \left( \nabla_\vt \LL( \vt^\ast, Y^n) \right)$ (cf. \Cref{helpprop}(c)), it holds that
$$\lim_{n \to \infty} n \Var( \Xi \nabla_\vt \LL( \vt^*, Y^n) ) = \Xi \II(\vt^\ast) \Xi^T. $$
An application of \Cref{likelihoodlil} gives
$$\limsup_{n \to \infty}  \frac{\sqrt{n}}{\sqrt{ \log(\log(n))}} x^T \Xi \nabla_\vt \wh \LL( \vt^*, Y^n)  =   \sqrt{2\cdot x^T \Xi \II(\vt^\ast) \Xi^T x} \quad \Pas$$
for every $x \in \R^{N(\Theta)}\setminus\{0_{N(\Theta)}\}$.
Just as in the proof of \cite[Lemma 2]{finkelstein}, we can conclude from this that $\Pas$
\begin{eqnarray*}
{\limsup_{n \to \infty} \frac{\sqrt{n}}{\sqrt{ \log(\log(n))}} \| \Xi \nabla_\vt \wh {\LL}(\vt^\ast, Y^n) \|}
 &=&\limsup_{n \to \infty} \frac{\sqrt{n}}{\sqrt{ \log(\log(n))}} \sup_{\|x\|=1}\left|x^T \Xi \nabla_\vt \wh {\LL}(\vt^\ast, Y^n)\right|\\
 &=&\sup_{\| x \| = 1 } \sqrt{2\cdot x^T \Xi \II(\vt^\ast) \Xi^T x}\\
&=& \sqrt{2\cdot\lambda_{\text{max}}(\Xi \II(\vt^\ast) \Xi^T)}.
\end{eqnarray*}
\end{proof}
Having this theorem allows us to derive a variant of the law of the iterated logarithm for the function $\wh \LL$.
\begin{theorem}\label{LLasymp}
Assume that the space $\Theta$ with associated family of continuous-time state space models $(A_{\vt}, B_{\vt}, C_{\vt}, L_{\vt})_{\vt \in \Theta}$ satisfies \autoref{as_D}.
Then
\begin{eqnarray*}
\limsup_{n \to \infty} \frac{n}{\log(\log(n))} \left(  \wh \LL( \vt^*, Y^n) - \wh\LL( \widehat{\vt}^n, Y^n) \right) = \lambda_{\text{max}}(\HH(\vt^\ast)^{-\frac12} \II(\vt^\ast) \HH(\vt^\ast)^{-\frac12}) \quad \Pas \label{asymptoticvt2}
\end{eqnarray*}
\end{theorem}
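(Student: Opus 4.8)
The plan is to expand $\vt \mapsto \wh\LL(\vt, Y^n)$ to second order around the pseudo-true parameter $\vt^\ast$ and to exploit that the gradient vanishes at the QMLE, which reduces the difference on the left-hand side to a quadratic form in $\nabla_\vt\wh\LL(\vt^\ast, Y^n)$ whose coefficient matrix converges $\Pas$ to $\tfrac12\HH(\vt^\ast)^{-1}$; the asymptotics are then read off from the law of the iterated logarithm in \Cref{LLmult}. In detail: by \Cref{idprop}, $\widehat\vt^n \to \vt^\ast$ $\Pas$, and by \ref{as_D8} the point $\vt^\ast$ lies in the interior of $\Theta$; hence for $\p$-almost every $\omega$ there is an $n_0(\omega)$ such that $\widehat\vt^n$ is interior for all $n \geq n_0(\omega)$, so that (using that $\wh\LL(\cdot, Y^n)$ is $C^2$ by \ref{as_D4} and $\widehat\vt^n$ minimizes it) $\nabla_\vt\wh\LL(\widehat\vt^n, Y^n) = 0$. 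Taylor's theorem with integral remainder, applied to $\wh\LL(\cdot,Y^n)$ and to $\nabla_\vt\wh\LL(\cdot,Y^n)$, then gives for $n \geq n_0(\omega)$
\begin{equation*}
\wh\LL(\vt^\ast, Y^n) - \wh\LL(\widehat\vt^n, Y^n) = (\widehat\vt^n - \vt^\ast)^T M_n (\widehat\vt^n - \vt^\ast), \qquad \nabla_\vt\wh\LL(\vt^\ast, Y^n) = -\widetilde M_n (\widehat\vt^n - \vt^\ast),
\end{equation*}
where $M_n = \int_0^1 t\,\nabla^2_\vt\wh\LL(\vt^\ast + t(\widehat\vt^n - \vt^\ast), Y^n)\,\dd t$ and $\widetilde M_n = \int_0^1 \nabla^2_\vt\wh\LL(\vt^\ast + t(\widehat\vt^n - \vt^\ast), Y^n)\,\dd t$, and the second identity used $\nabla_\vt\wh\LL(\widehat\vt^n, Y^n) = 0$. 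Since $M_n$ and $\widetilde M_n$ are symmetric, eliminating $\widehat\vt^n - \vt^\ast$ yields, whenever $\widetilde M_n$ is invertible,
\begin{equation*}
\wh\LL(\vt^\ast, Y^n) - \wh\LL(\widehat\vt^n, Y^n) = \nabla_\vt\wh\LL(\vt^\ast, Y^n)^T\, \widetilde M_n^{-1} M_n \widetilde M_n^{-1}\, \nabla_\vt\wh\LL(\vt^\ast, Y^n).
\end{equation*}

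Next I would control the Hessian factors. By \Cref{helpprop}(d), $\nabla^2_\vt\wh\LL(\vt, Y^n) \to \HH(\vt)$ uniformly in $\vt$ $\Pas$, and by \ref{as_D4} the map $\vt \mapsto \HH(\vt) = \E[\nabla^2_\vt l_{\vt,1}]$ is continuous; combining these with $\widehat\vt^n \to \vt^\ast$ gives $\sup_{t \in [0,1]}\lVert\nabla^2_\vt\wh\LL(\vt^\ast + t(\widehat\vt^n - \vt^\ast), Y^n) - \HH(\vt^\ast)\rVert \to 0$ $\Pas$, whence $M_n \to \tfrac12\HH(\vt^\ast)$ and $\widetilde M_n \to \HH(\vt^\ast)$ $\Pas$. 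The matrix $\HH(\vt^\ast)$ is symmetric positive definite — it is the Hessian of $\mathscr{Q}$ at its interior minimizer $\vt^\ast$ and enters \Cref{idprop} through $\HH(\vt^\ast)^{-1}$ — so $\widetilde M_n$ is invertible for large $n$ with $\widetilde M_n^{-1} \to \HH(\vt^\ast)^{-1}$, and therefore $R_n := \widetilde M_n^{-1} M_n \widetilde M_n^{-1} - \tfrac12 \HH(\vt^\ast)^{-1} \to 0$ $\Pas$.

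To finish I would bring in the law of the iterated logarithm. Applying \Cref{LLmult} with $\Xi = I_{N(\Theta) \times N(\Theta)}$ shows $\limsup_{n\to\infty} \tfrac{\sqrt n}{\sqrt{\log(\log(n))}}\lVert\nabla_\vt\wh\LL(\vt^\ast, Y^n)\rVert < \infty$ $\Pas$, so that $\tfrac{n}{\log(\log(n))}\lVert\nabla_\vt\wh\LL(\vt^\ast, Y^n)\rVert^2$ is $O_{\text{a.s.}}(1)$ and hence
\begin{equation*}
\frac{n}{\log(\log(n))}\left\lvert\nabla_\vt\wh\LL(\vt^\ast, Y^n)^T R_n \nabla_\vt\wh\LL(\vt^\ast, Y^n)\right\rvert \leq \lVert R_n\rVert\,\frac{n}{\log(\log(n))}\lVert\nabla_\vt\wh\LL(\vt^\ast, Y^n)\rVert^2 \to 0 \quad \Pas.
\end{equation*}
Combining this with the two previous steps and using $\HH(\vt^\ast)^{-1} = \HH(\vt^\ast)^{-\frac12}\HH(\vt^\ast)^{-\frac12}$,
\begin{equation*}
\limsup_{n\to\infty} \frac{n}{\log(\log(n))}\left(\wh\LL(\vt^\ast, Y^n) - \wh\LL(\widehat\vt^n, Y^n)\right) = \tfrac12 \limsup_{n\to\infty}\frac{n}{\log(\log(n))}\bigl\lVert\HH(\vt^\ast)^{-\frac12}\nabla_\vt\wh\LL(\vt^\ast, Y^n)\bigr\rVert^2.
\end{equation*}
The sequence inside the last $\limsup$ is the square of a non-negative sequence, so its $\limsup$ equals the square of the $\limsup$ of that sequence, and \Cref{LLmult} with $\Xi = \HH(\vt^\ast)^{-\frac12}$ — which is symmetric, so $\Xi\II(\vt^\ast)\Xi^T = \HH(\vt^\ast)^{-\frac12}\II(\vt^\ast)\HH(\vt^\ast)^{-\frac12}$ — identifies it as $2\lambda_{\text{max}}(\HH(\vt^\ast)^{-\frac12}\II(\vt^\ast)\HH(\vt^\ast)^{-\frac12})$; halving gives the claim.

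The main obstacle is the interplay with the blow-up factor $n/\log(\log(n))$: one must argue that $\widehat\vt^n$ is interior for all large $n$ so the first-order condition holds exactly, and that the matrix-perturbation remainder $R_n$ — which is only known to vanish, with no rate — is nevertheless annihilated after multiplication by $n/\log(\log(n))$, which works precisely because \Cref{LLmult} forces $\tfrac{n}{\log(\log(n))}\lVert\nabla_\vt\wh\LL(\vt^\ast, Y^n)\rVert^2$ to stay a.s. bounded. Everything else is a continuous-mapping argument resting on \Cref{helpprop}(d) and the continuity of $\HH$.
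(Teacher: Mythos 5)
Your proof is correct and follows essentially the same route as the paper: a Taylor expansion of the gradient to express $\widehat{\vt}^n - \vt^\ast$ in terms of $\nabla_\vt\wh\LL(\vt^\ast, Y^n)$, a second expansion of $\wh\LL$ using $\nabla_\vt\wh\LL(\widehat{\vt}^n, Y^n) = 0$, convergence of the Hessian factors to $\HH(\vt^\ast)$ via \Cref{helpprop}(d), and finally \Cref{LLmult} with $\Xi = \HH(\vt^\ast)^{-\frac12}$. The only differences are cosmetic — you use integral-form remainders and isolate the perturbation $R_n$ explicitly (killed by the a.s.\ boundedness of $\tfrac{n}{\log(\log(n))}\lVert\nabla_\vt\wh\LL(\vt^\ast,Y^n)\rVert^2$), where the paper uses Lagrange-form intermediate points and passes the convergence of $\nabla^2_\vt\wh\LL(\check{\vt}^n,Y^n)^{\frac12}\nabla^2_\vt\wh\LL(\overline{\vt}^n,Y^n)^{-1}$ to $\HH(\vt^\ast)^{-\frac12}$ directly inside the norm.
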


\begin{proof}
A first-order Taylor expansion of $ \nabla_\vt \wh\LL( \widehat{\vt}^n, Y^n) $ around $\vt^\ast$ gives
\begin{equation*}
0 =  \nabla_\vt \wh\LL( \widehat{\vt}^n, Y^n)  =  \nabla_\vt \wh\LL( \vt^*, Y^n)  +  \nabla^2_\vt \wh\LL( \overline{\vt}^n, Y^n)  (\widehat{\vt}^n - \vt^*),
\end{equation*}
for some $\overline{\vt}^n$ with $\| \overline{\vt}^n - \vt^\ast \| \leq \| \wh \vt^n - \vt^\ast\|$.
Since by  \Cref{idprop} we know that $\widehat{\vt}^n\to \vt^*$ $\Pas$, $\overline{\vt}^n\to \vt^*$ $\Pas$ as well. A conclusion of \Cref{helpprop}(e) is that
 $\lim_{n\to\infty}\det(\nabla^2_\vt \wh\LL( \overline{\vt}^n, Y^n))>0$  $\Pas$, so that
\begin{equation}\label{invL2}
\widehat{\vt}^n - \vt^* = - \left(\nabla^2_\vt \wh\LL( \overline{\vt}^n, Y^n)   \right)^{-1} \nabla_\vt \wh\LL( \vt^*, Y^n) \quad \Pas
\end{equation}
is well-defined.
Now we employ a Taylor expansion again, albeit this time we expand $\wh\LL(\vt^*, Y^n)$ around $\widehat{\vt}^n$ and use a second-order expansion. This gives us
\begin{eqnarray*}
 \wh\LL (\vt^*, Y^n)
  = \wh\LL (\widehat{\vt}^n, Y^n) + \frac12 (\widehat{\vt}^n - \vt^*)^T \nabla^2_\vt \wh\LL( \check{\vt}^n, Y^n)  (\widehat{\vt}^n - \vt^*),
\end{eqnarray*}
 for some $\check{\vt}^n$ with $\| \check{\vt}^n - \wh{\vt}^n \| \leq \| \wh \vt^n - \vt^\ast\|$, where we have used $\nabla_\vt \wh\LL( \widehat{\vt}^n, Y^n)  = 0$. As above we have $\check{\vt}^n \to \vt^\ast$ $\Pas$ Rearranging the terms, we arrive at
 \begin{align}
\wh\LL( \vt^*, Y^n) - \wh\LL( \widehat{\vt}^n, Y^n) &= \frac12 \| \nabla^2_\vt \wh\LL( \check{\vt}^n, Y^n)^{\frac12} (\widehat{\vt}^n - \vt^*) \|^2  \notag \\
                                                                    &= \frac12 \| \nabla^2_\vt \wh\LL( \check{\vt}^n, Y^n)^{\frac12} (\nabla^2_\vt \wh\LL( \overline{\vt}^n, Y^n))^{-1} \nabla_\vt \wh\LL( \vt^*, Y^n) \|^2. \label{widehatLL}
 \end{align}
An application of \Cref{LLmult} with $\Xi = \HH(\vt^\ast)^{-\frac12}$ (which is symmetric)
yields
\begin{eqnarray}\label{widehatLL2}
{\limsup_{n \to \infty} \frac{\sqrt{n}}{\sqrt{ \log(\log(n))}} \| \HH(\vt^\ast)^{-\frac12} \nabla_\vt \widehat{\LL}(\vt^\ast, Y^n) \|} \nonumber
=  \sqrt{2\cdot\lambda_{\text{max}}(\HH(\vt^\ast)^{-\frac12} \II(\vt^\ast) \HH(\vt^\ast)^{-\frac12})} \,\,\,\, \Pas
\end{eqnarray}
With $  \nabla^2_\vt \wh\LL( \check{\vt}^n, Y^n)^{\frac12} \nabla^2_\vt \wh\LL( \ov{\vt}^n, Y^n)^{-1}\to  \HH(\vt^\ast)^{-\frac12}$ $\p$-a.s. (cf. \Cref{helpprop}(d)) and \eqref{widehatLL}
we can derive the statement.
\end{proof}

\begin{remark}\label{remarkwhite}
This result is an analog to \cite[Proposition 5.1]{sinwhite}
 which investigates consistency of information criteria  under some different model assumptions.
However, it is stronger than the one in the cited article, since we are able to specify the limit superior
exactly while in \cite{sinwhite} it is only shown that convergence occurs.
\end{remark}

\section{Likelihood-based information criteria}\label{sec:IC} \label{Section:4}

In this main  section we  derive properties for likelihood-based information criteria of the
following form.

\begin{definition}\label{ICLdef}
Assume that the space $\Theta$ with associated family of continuous-time state space models $(A_{\vt}, B_{\vt}, C_{\vt}, L_{\vt})_{\vt \in \Theta}$ satisfies \autoref{as_D}.
Furthermore, let $\widehat{\vt}^n_{}$ be the QMLE based on $Y^n$ in $\Theta_{}$ as defined in \eqref{GQMLE} and let $C(n)$ be a positive, nondecreasing function of $n$ with
$$\lim_{n \to \infty} \frac{C(n)}{n} = 0.$$
Then a likelihood-based \textrm{information criterion} has the form
\begin{equation}\label{ICL}
\text{IC}_n(\Theta_{}) :=\wh \LL ( \widehat{\vt}^n, Y^n) + N(\Theta) \frac{C(n)}{n}.
\end{equation}
\end{definition}
These information criteria have the property that $\text{IC}_n(\Theta_{})\stackrel{\p}{\to}\mathcal{Q}(\vt)$. Since
$\mathcal{Q}$ attains its minimum at
$\vt_0$ for which $\text{MCARMA}(A_{\vt_0},B_{\vt_0},C_{\vt_0},L_{\vt_0})=Y$ (cf. \Cref{loglikelemma}) we choose the parameter space for which the information criterion is minimal. The condition
$C(n)/n \to 0$ guarantees that underfitting is not possible, i. e. there is no positive probability
of choosing a parameter space which cannot generate the process underlying the data. However,
$C(n)/n \to 0$ is not sufficient to exclude overfitting, i.e.  a positive probability to choose
a space with more parameters than necessary. In the following we will give necessary and sufficient
conditions to exclude this case. To this end we  need some notation.

\begin{definition} \label{def:4.2}
Let $\Theta$ and $\Theta_0$ be parameter spaces with associated families of continuous-time state space models
$(A_{\vt}, B_{\vt}, C_{\vt}, L_{\vt})_{\vt \in \Theta_0}$ and $(A_{\vt}, B_{\vt}, C_{\vt}, L_{\vt})_{\vt \in \Theta}$, respectively,
satisfying \autoref{as_D}.
 Assume that there is a $\vt_0 \in \Theta_0$ with $\text{MCARMA}(A_{\vt_0},B_{\vt_0},C_{\vt_0},L_{\vt_0})=Y$.
We say that $\Theta_0$ is nested in $\Theta$ if $N(\Theta_0)<N(\Theta)$ and there exist a matrix $F \in \R^{N(\Theta) \times N(\Theta_0)}$  with $F^TF = I_{N(\Theta_0) \times N(\Theta_0)}$ as well as a $c \in \R^{N(\Theta)}$ such that 
  $$(A_{\vt}, B_{\vt}, C_{\vt}, L_{\vt})_{\vt \in \Theta_0}=(A_{F \vt + c}, B_{F \vt + c}, C_{F \vt + c}, L_{F \vt + c})_{\vt \in \Theta_0}.$$
\end{definition}

The interpretation of nested is that all processes generated by a parameter in $\Theta_0$ can also be generated by a parameter in $\Theta$. However, there
are also processes which can be generated by a parameter in $\Theta$, but not by a parameter in $\Theta_0$. In this sense $\Theta_0$ is
contained in $\Theta$. The condition $F^TF = I_{N(\Theta_0) \times N(\Theta_0)}$ guarantees that we have a bijective map from $\Theta_0\to F\Theta_0+c\subset\Theta$.

For MCARMA processes parametrized in Echelon form, a parameter space $\Theta$ that satisfies \linebreak \autoref{as_D} contains
only processes that  have the same Kronecker index $m=(m_1,\ldots,m_d)$
and hence, fixed degree $p=\max_{i=1, \ldots, d} m_i$ of the AR polynomial. However, for the MA polynomial we only know that the degree is less than or equal to $p-1$.
In this context $\Theta_0$ could be a parameter space generating processes with Kronecker index $m_0$ and MA degree not exceeding $q_0$, where
$\Theta$ generates processes with Kronecker index $m_0$ and MA degree not exceeding $q$, $q_0<q\leq p_0-1$. Then $\Theta_0$ is nested in $\Theta$.
In this way our information criteria can be used to estimate the Kronecker index, the degree of the AR polynomial and the degree of the MA polynomial.

In the following we investigate only parameter spaces with associated family of continuous-time state space models
$(A_{\vt}, B_{\vt}, C_{\vt}, L_{\vt})$ in Echelon form.
Let the Kronecker index, the degree of the AR polynomial and the degree of the MA polynomial, respectively, belonging to $Y$ be denoted by
$m_0$, $p_0$ and $q_0$, respectively. Then $\Theta^E_0$ denotes the parameter space generating all MCARMA processes
with Kronecker index $m_0$. The degree of the AR polynomial of those processes is then $p_0$, the degree of the
 MA polynomial is between $0$ and $p_0-1$. The space $\Theta^E_0$ is the biggest parameter space generating MCARMA processes in Echelon form,
 satisfying \autoref{as_D} and containing a parameter $\vartheta_0^E$
with $\text{MCARMA}(A_{\vt_0^E},B_{\vt_0^E},C_{\vt_0^E},L_{\vt_0^E})=Y$. Note that $\vt^E_0$ is then the pseudo-true parameter in $\Theta^E_0$.

Next, we define under which circumstances  $IC_n$ is consistent; we distinguish two different types of consistency.
\begin{definition}\label{defcons} $\mbox{}$
\begin{itemize}
\item[(a)] The information criterion $IC_n$ is called strongly consistent if for any parameter spaces $\Theta_0$ and $ \Theta$
with associated families of continuous-time state space models
$(A_{\vt}, B_{\vt}, C_{\vt}, L_{\vt})_{\vt \in \Theta_0}$ and $(A_{\vt}, B_{\vt}, C_{\vt}, L_{\vt})_{\vt \in \Theta}$, respectively,
satisfying \autoref{as_D} and with a
  $\vt_0\in\Theta_0$ such that \linebreak $\text{MCARMA}(A_{\vt_0},B_{\vt_0},C_{\vt_0},L_{\vt_0})=Y$, and either $\text{MCARMA}(A_{\vt},B_{\vt},C_{\vt},L_{\vt})\not=Y$  for every $\vt \in \Theta$ or $\Theta_0$ being nested in $\Theta$ we have
\begin{equation*}\label{defstrongcons}
\P\left( \limsup_{n\to\infty}\left(\text{IC}_n(\Theta_{0})- \text{IC}_n(\Theta)\right)<0 \right) = 1.
\end{equation*}
\item[(b)] The information criterion $IC_n$ is called weakly consistent if for any parameter spaces $\Theta_0$ and $ \Theta$
with associated families of continuous-time state space models
$(A_{\vt}, B_{\vt}, C_{\vt}, L_{\vt})_{\vt \in \Theta_0}$ and $(A_{\vt}, B_{\vt}, C_{\vt}, L_{\vt})_{\vt \in \Theta}$, respectively,
satisfying \autoref{as_D} and 
with a $\vt_0\in\Theta_0$ such that \linebreak $\text{MCARMA}(A_{\vt_0},B_{\vt_0},C_{\vt_0},L_{\vt_0})=Y$, and either $\text{MCARMA}(A_{\vt},B_{\vt},C_{\vt},L_{\vt})\not=Y$  for every $\vt \in \Theta$ or $\Theta_0$ being nested in $\Theta$ we have
\begin{equation*}\label{defweakcons}
\lim_{n \to \infty} \P\left( \text{IC}_n(\Theta_{0}) - \text{IC}_n(\Theta)<0 \right) = 1.
\end{equation*}
\end{itemize}
\end{definition}
If the information criterion is strongly consistent, then the chosen parameter space converges almost surely to the true parameter space. For a weakly consistent
information criterion we only  have convergence in probability.
Moreover, if we compare two parameter spaces  both containing a parameter that generates the true output process, then we choose
the parameter space with less parameters asymptotically almost surely  in the strongly consistent
case, whereas in the weakly consistent case we have convergence in probability.
This especially means overfitting is asymptotically excluded.

With these notions  we  characterize consistency of $IC_n$ for MCARMA processes in terms  of the penalty term $C(n)$.
\begin{theorem}\label{strongcons} $\mbox{}$
\begin{itemize}
\item[(a)]
The criterion $IC_n$ is strongly consistent if
$$\limsup_{n \to \infty} \frac{C(n)}{\log(\log(n))} > \lambda_{\text{max}}(\HH(\vt^E_0)^{-\frac12} \II(\vt^E_0) \HH(\vt^E_0)^{-\frac12}).$$
The information criterion is {not} strongly consistent if
$\limsup_{n \to \infty} C(n)/\log(\log(n))=0.$
\item[(b)]  The criterion ${IC_n}$ is weakly consistent if
$\limsup_{n \to \infty} C(n) = \infty.$
 If
$\limsup_{n \to \infty} C(n) < \infty$
then ${IC_n}$ is neither weakly nor strongly consistent.
\item[(c)] Let $\Theta$ and $\Theta_0$ be parameter spaces with associated families of continuous-time state space models
$(A_{\vt}, B_{\vt}, C_{\vt}, L_{\vt})_{\vt \in \Theta_0}$ and $(A_{\vt}, B_{\vt}, C_{\vt}, L_{\vt})_{\vt \in \Theta}$, respectively,
satisfying \autoref{as_D}.
 Assume that there is a $\vt_0 \in \Theta_0$ with $\text{MCARMA}(A_{\vt_0},B_{\vt_0},C_{\vt_0},L_{\vt_0})=Y$ and $\Theta_0$ is nested in $\Theta$ with map $F$.
Moreover, suppose   $\limsup_{n\to\infty}C(n)=C<\infty$.
Define
\begin{align*}
\MM_{F}(\vt^\ast):= -\HH^{-1}(\vt^*) +  F (F^T \HH(\vt^\ast) F)^{-1} F^T.
\end{align*}
 Then
\begin{equation*}\label{thmcons1}
\lim_{n \to \infty} \P ( \text{IC}_n(\Theta_{0}) - \text{IC}_n(\Theta_{}) > 0 ) = \P \left( \sum_{i=1}^{N(\Theta)-N(\Theta_0)} \lambda_i \chi^2_i > 2 [ N (\Theta) - N(\Theta_0)]C \right)>0,
\end{equation*}
where $(\chi^2_i)$ is a sequence of independent $\chi^2$ random variables with one degree of freedom and
the $\lambda_i$ are the $N(\Theta)-N(\Theta_0)$ strictly positive eigenvalues of
$$\HH(\vt^*)^{\frac12} \MM_F(\vt^*) \II(\vt^*) \MM_F(\vt^*) \HH(\vt^*)^{\frac12}.$$
\end{itemize}
\end{theorem}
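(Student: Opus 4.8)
The plan is to split along the dichotomy in \Cref{defcons}: either $\Theta$ contains no parameter generating $Y$, or $\Theta_0$ is nested in $\Theta$. In the first case the statement is soft. By \Cref{helpprop}(b) and \Cref{idprop}, $\wh\LL(\wh\vt^n_0,Y^n)\to\mathscr{Q}(\vt_0)$ and $\wh\LL(\wh\vt^n,Y^n)\to\mathscr{Q}(\vt^\ast_\Theta)$ $\Pas$, where $\vt^\ast_\Theta$ is the pseudo-true parameter of $\Theta$; by identifiability from the spectral density (\Cref{loglikelemma}) we have $\mathscr{Q}(\vt^\ast_\Theta)>\mathscr{Q}(\vt_0)$, and since $C(n)/n\to0$ this forces $\text{IC}_n(\Theta_0)-\text{IC}_n(\Theta)\to\mathscr{Q}(\vt_0)-\mathscr{Q}(\vt^\ast_\Theta)<0$ $\Pas$, which settles parts (a) and (b) in this case. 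It remains to treat the nested case, which is also the only case relevant to (c). Write $\Theta_0$ nested in $\Theta$ via $(F,c)$ and put $\vt^\ast:=F\vt_0+c\in\Theta$, the pseudo-true parameter of $\Theta$, which generates $Y$; note that $\wh\LL$ is the same function on $\Theta_0$ and on $F\Theta_0+c\subseteq\Theta$.

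The backbone is a second-order expansion of $D_n:=\wh\LL(\wh\vt^n_0,Y^n)-\wh\LL(\wh\vt^n,Y^n)$. Because $\wh\LL(\cdot,Y^n)$ depends on its argument only through $(A_\vt,C_\vt,K_\vt,V_\vt)$ and these are invariant under $\vt\mapsto F\vt+c$, we have $\wh\LL(F\vt+c,Y^n)=\wh\LL_{\Theta_0}(\vt,Y^n)$ on $\Theta_0$; differentiating gives $\nabla_\vt\wh\LL_{\Theta_0}(\vt_0,Y^n)=F^{T}\nabla_\vt\wh\LL(\vt^\ast,Y^n)$ and $\HH_{\Theta_0}(\vt_0)=F^{T}\HH(\vt^\ast)F$, and $\wh\LL_{\Theta_0}(\vt_0,Y^n)=\wh\LL(\vt^\ast,Y^n)$. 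Repeating the Taylor arguments from the proof of \Cref{LLasymp}, once in $\Theta$ about $\vt^\ast$ and once in $\Theta_0$ about $\vt_0$, subtracting, and replacing the intermediate Hessians by $\HH(\vt^\ast)$ resp.\ $\HH_{\Theta_0}(\vt_0)$ by \Cref{helpprop}(d),(e), yields with $g_n:=\nabla_\vt\wh\LL(\vt^\ast,Y^n)$
\begin{equation*}
D_n=\tfrac12\, g_n^{T}\big({-}\MM_F(\vt^\ast)\big)\, g_n+R_n,
\end{equation*}
where $R_n$ is $o_{\P}(n^{-1})$, and in fact $o_{\text{a.s.}}(\log(\log(n))/n)$ once $\|g_n\|^{2}=O_{\text{a.s.}}(\log(\log(n))/n)$ is invoked from \Cref{likelihoodlil}. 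The matrix $\MM_F(\vt^\ast)$ is the one in the statement, and its linear algebra is clean: with $P:=\HH(\vt^\ast)^{\frac12}F(F^{T}\HH(\vt^\ast)F)^{-1}F^{T}\HH(\vt^\ast)^{\frac12}$, the orthogonal projection onto the column space of $\HH(\vt^\ast)^{\frac12}F$, one has $\HH(\vt^\ast)^{\frac12}\MM_F(\vt^\ast)\HH(\vt^\ast)^{\frac12}=-(I-P)$, so $-\MM_F(\vt^\ast)$ is positive semidefinite of rank $N(\Theta)-N(\Theta_0)$ and $\MM_F(\vt^\ast)\HH(\vt^\ast)\MM_F(\vt^\ast)=-\MM_F(\vt^\ast)$; together with the cyclic invariance of nonzero eigenvalues this shows that the strictly positive eigenvalues $\lambda_1,\dots,\lambda_{N(\Theta)-N(\Theta_0)}$ of $\HH(\vt^\ast)^{\frac12}\MM_F(\vt^\ast)\II(\vt^\ast)\MM_F(\vt^\ast)\HH(\vt^\ast)^{\frac12}$ coincide with the positive eigenvalues of $\II(\vt^\ast)^{\frac12}\big({-}\MM_F(\vt^\ast)\big)\II(\vt^\ast)^{\frac12}$, of which there are exactly $N(\Theta)-N(\Theta_0)$ since $\II(\vt^\ast)$ is positive definite (cf.\ the proof of \Cref{likelihoodlil}).

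For part (c): since $C$ is nondecreasing, $\limsup_n C(n)=C<\infty$ forces $C(n)\to C$, so multiplying the expansion by $n$ and using $\sqrt n\, g_n\stackrel{\DD}{\to}\NN(0,\II(\vt^\ast))$ (\Cref{helpprop}(c)) together with Slutsky gives
\begin{align*}
n\big(\text{IC}_n(\Theta_0)-\text{IC}_n(\Theta)\big)
&=\tfrac12\big(\sqrt n\, g_n\big)^{T}\big({-}\MM_F(\vt^\ast)\big)\big(\sqrt n\, g_n\big)-[N(\Theta)-N(\Theta_0)]C(n)+o_{\P}(1)\\
&\stackrel{\DD}{\to}\tfrac12 Z^{T}\big({-}\MM_F(\vt^\ast)\big)Z-[N(\Theta)-N(\Theta_0)]C
\end{align*}
with $Z\sim\NN(0,\II(\vt^\ast))$. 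Writing $Z=\II(\vt^\ast)^{\frac12}W$, $W\sim\NN(0,I)$, and diagonalizing $\II(\vt^\ast)^{\frac12}({-}\MM_F(\vt^\ast))\II(\vt^\ast)^{\frac12}$ yields $Z^{T}({-}\MM_F(\vt^\ast))Z\stackrel{\DD}{=}\sum_{i=1}^{N(\Theta)-N(\Theta_0)}\lambda_i\chi^2_i$ with the $\chi^2_i$ i.i.d.\ with one degree of freedom; as the limit law has no atom at $0$, the continuous mapping theorem gives
$$\lim_{n\to\infty}\P\big(\text{IC}_n(\Theta_0)-\text{IC}_n(\Theta)>0\big)=\P\Big(\sum_{i=1}^{N(\Theta)-N(\Theta_0)}\lambda_i\chi^2_i>2[N(\Theta)-N(\Theta_0)]C\Big),$$
which is strictly positive because a nontrivial $\lambda_i$-mixture of $\chi^2_1$'s has unbounded support. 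The same convergence gives the remaining non-consistency claims: if $\limsup_n C(n)<\infty$ this limit probability is positive, so $IC_n$ is not weakly consistent and hence (weak being implied by strong) not strongly consistent; if moreover $\limsup_n C(n)/\log(\log(n))=0$, applying \Cref{LLmult} with $\Xi=({-}\MM_F(\vt^\ast))^{\frac12}$ (so $g_n^{T}({-}\MM_F(\vt^\ast))g_n=\|\Xi g_n\|^{2}$) yields $\limsup_n\tfrac{n}{\log(\log(n))}D_n=\lambda_{\text{max}}\big(\II(\vt^\ast)^{\frac12}({-}\MM_F(\vt^\ast))\II(\vt^\ast)^{\frac12}\big)=\max_i\lambda_i>0$ $\Pas$, while $C(n)/\log(\log(n))\to0$, so $\text{IC}_n(\Theta_0)>\text{IC}_n(\Theta)$ infinitely often $\Pas$ and the smaller model is not eventually selected. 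The ``weakly consistent if $\limsup_n C(n)=\infty$'' half of (b) follows similarly, since then $n(\text{IC}_n(\Theta_0)-\text{IC}_n(\Theta))=O_{\P}(1)-[N(\Theta)-N(\Theta_0)]C(n)\to-\infty$ in probability.

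For the positive half of (a): since $\Theta_0$ and $\Theta$ are Echelon-parametrized and each contains a parameter generating $Y$, both embed into $\Theta^E_0$, whence, with $\wh\vt^n_E$ the QMLE in $\Theta^E_0$,
\begin{equation*}
\wh\LL_{\Theta^E_0}(\wh\vt^n_E,Y^n)\le\wh\LL(\wh\vt^n,Y^n)\le\wh\LL(\wh\vt^n_0,Y^n)\le\wh\LL(\vt^\ast,Y^n)=\wh\LL_{\Theta^E_0}(\vt^E_0,Y^n),
\end{equation*}
so $0\le D_n\le\wh\LL_{\Theta^E_0}(\vt^E_0,Y^n)-\wh\LL_{\Theta^E_0}(\wh\vt^n_E,Y^n)$, and \Cref{LLasymp} applied in $\Theta^E_0$ gives $\limsup_n\tfrac{n}{\log(\log(n))}D_n\le\lambda_{\text{max}}(\HH(\vt^E_0)^{-\frac12}\II(\vt^E_0)\HH(\vt^E_0)^{-\frac12})$. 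If $\limsup_n C(n)/\log(\log(n))$ exceeds this quantity, then $[N(\Theta)-N(\Theta_0)]C(n)/n$ eventually dominates the nonnegative $D_n$ $\Pas$, so $\text{IC}_n(\Theta_0)<\text{IC}_n(\Theta)$ for all large $n$ and $IC_n$ is strongly consistent. The principal obstacle I foresee is the bookkeeping in the double Taylor expansion — showing that every remainder term is genuinely $o_{\text{a.s.}}(\log(\log(n))/n)$, which requires combining the law of the iterated logarithm for $g_n$ with the almost sure uniform convergence of the Hessians from \Cref{helpprop}(d),(e) — together with the spectral identification of the $\lambda_i$; the reparametrization invariance of $\wh\LL$ and the embeddings into $\Theta^E_0$ are straightforward but must be made precise.
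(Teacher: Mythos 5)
Your proof is correct and follows essentially the same route as the paper: the same case split (misspecified $\Theta$ handled by the strong law plus the strict inequality of \Cref{loglikelemma}; nested case handled separately), the same embedding of both spaces into $\Theta_0^E$ combined with \Cref{LLasymp} for the positive half of (a), and a Taylor-expansion reduction of $\text{IC}_n(\Theta_0)-\text{IC}_n(\Theta)$ to a quadratic form in the score $\nabla_\vt\wh\LL(\vt^\ast,Y^n)$ whose limit is the weighted $\chi^2_1$-sum of the statement for parts (b) and (c). The only differences are cosmetic: you subtract two separate expansions to land on $\tfrac12\,g_n^T\bigl(-\MM_F(\vt^\ast)\bigr)g_n$, whereas the paper expands $\wh\LL(f(\wh\vt^n_0),Y^n)$ around $\wh\vt^n$ and obtains $\tfrac12\,\mathbf{N}_F^T\HH(\vt^\ast)\mathbf{N}_F$ with $\mathbf{N}_F=\MM_F(\vt^\ast)Z$ — the two agree via the identity $\MM_F(\vt^\ast)\HH(\vt^\ast)\MM_F(\vt^\ast)=-\MM_F(\vt^\ast)$, which you verify — and your handling of the non-consistency claim when $\limsup_{n\to\infty}C(n)/\log(\log(n))=0$ via the exact limit superior $\max_i\lambda_i>0$ is precisely the sharper argument the paper defers to \Cref{consistremark}(b), and is arguably cleaner than the paper's ``it clearly follows from $D_n\geq 0$''.
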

\begin{proof}
For the whole proof, we denote by $\vt_0$ the parameter in $\Theta_{0}$ with \linebreak $\text{MCARMA}(A_{\vt_0},B_{\vt_0},C_{\vt_0},L_{\vt_0})=Y$
and by $\vt^*$ the pseudo-true parameter
 in $\Theta_{}$. Moreover, let $\widehat{\vt}^n_0$ denote the QMLE based on $Y^{n}$ in $\Theta_{0}$, $\widehat{\vt}^{n}_{}$ the QMLE based on $Y^n$ in $\Theta_{}$ and $\widehat{\vt}^E_0$ the
 QMLE based on $Y^n$ in $\Theta^E_0.$ The corresponding quasi log-likelihood functions are denoted by $\wh \LL_0$, $\wh \LL$ and $\wh {\LL}_E$, respectively.\\
\textsl{(a)} \,
We distinguish two different cases.\\
Case 1: $\text{MCARMA}(A_{\vt},B_{\vt},C_{\vt},L_{\vt})\not=Y$  for every $\vt \in \Theta$. Then
\begin{eqnarray}
\text{IC}_n(\Theta_{0}) - \text{IC}_n(\Theta_{}) &= \wh \LL_0 \left( \widehat{\vt}^n_{0}, Y^n \right) - \wh\LL \left( \widehat{\vt}^n, Y^n \right)
+ [N(\Theta_0) - N(\Theta)] \frac{C(n)}{n}. \label{strongcons1}
\end{eqnarray}
On the one hand, by \autoref{LLasymp} we have that
\begin{eqnarray*}
    \wh\LL \left( \widehat{\vt}^n, Y^n \right) &=& \wh \LL \left( \vt^*, Y^n \right) +O_{\text{a.s.}}\left( \frac{\log(\log(n))}{n} \right),\\
    \wh\LL_0 \left( \widehat{\vt}^n_{0}, Y^n \right) &=& \wh \LL _0\left( \vt_0, Y^n \right) + O_{\text{a.s.}} \left( \frac{\log(\log(n))}{n} \right),
\end{eqnarray*}
and on the other hand, by \Cref{helpprop}(b) 
\begin{eqnarray*}
\wh  \LL \left( \vt^{*}, Y^n \right) =  \mathscr{Q}(\vt^\ast) + o_{\text{a.s.}}(1) \quad \mbox{and} \quad
 \wh  \LL_0 \left( \vt_0, Y^n \right)=  \mathscr{Q}(\vt_0) + o_{\text{a.s.}}(1).
\end{eqnarray*}
Finally, in this case the inequality from eq. \eqref{loglikelemmaeq2} is strict, so that for some $\delta>0$
\begin{align*}
\text{IC}_n(\Theta_{0}) - \text{IC}_n(\Theta_{}) &= \mathscr{Q}(\vt_0) - \mathscr{Q}(\vt^\ast)
                                         + \widehat{r}(n) + [N(\Theta_0) - N(\Theta)] \frac{C(n)}{n} \\
                                         &< - \delta + \widehat{r}(n) + [N(\Theta_0) - N(\Theta)] \frac{C(n)}{n},
\end{align*}
where $\widehat{r}(n)$ is $o_{\text{a.s.}}(1)$.
By assumption it holds that $ C(n)/n\to 0$ as $n \to \infty$, so that we get
\beao
    \p\left(\limsup_{n\to\infty}\left(\text{IC}_n(\Theta_{0}) - \text{IC}_n(\Theta_{})\right)<-\delta\right)=1.
\eeao
Case 2: $\Theta_0$ is nested in $\Theta$ with map $F$.
Note that $\Theta_0$ is also nested in $\Theta^E_0$ by definition, which then in turn means that $\Theta$ is nested in $\Theta^E_0$, implying
\begin{eqnarray} \label{4.2}
    \wh\LL (\wh\vt^n, Y^n) = \min_{\vt \in \Theta} \wh \LL(\vt, Y^n) \geq \min_{\vt \in \Theta^E_0} \wh \LL_E(\vt, Y^n) = \wh\LL_E (\wh\vt^n_E, Y^n) .
\end{eqnarray}
Moreover, $\wh\epsilon_{\vartheta_0,k}=\wh\epsilon_{\vartheta^*,k} = \wh \epsilon_{\vartheta^E_0,k}$ and hence,
\begin{equation}\label{proofcons1}
    \wh \LL_0 \left({\vt}_0, Y^n \right) = \wh\LL \left( \vt^*, Y^n \right) = \wh\LL_E \left( \vt^E_0, Y^n \right).
\end{equation}
With this and \eqref{4.2} we receive
\begin{eqnarray*}
\wh\LL_0 (\wh\vt^n_0, Y^n) - \wh\LL (\wh\vt^n, Y^n)
 \leq \wh\LL_E \left( \vt^E_0, Y^n \right) - \wh\LL_E (\widehat{\vt}^E_0, Y^n).
\end{eqnarray*}
Now, \Cref{LLasymp} tells us that
$$\limsup_{n \to \infty} \frac{n}{\log(\log(n))} \left(  \wh\LL_E \left( \vt^E_0, Y^n \right) - \wh\LL_E (\widehat{\vt}^E_0, Y^n)  \right) = \lambda_{\text{max}}(\HH(\vt^E_0)^{-\frac12} \II(\vt^E_0) \HH(\vt^E_0)^{-\frac12}) \quad \Pas$$
Turning to the information criterion, this gives
\begin{eqnarray*}
\lefteqn{\limsup_{n \to \infty}\frac{n}{\log(\log(n))}\left(\text{IC}_n(\Theta_{0}) - \text{IC}_n(\Theta_{})\right)} \notag \\
&\leq & \limsup_{n \to \infty} \frac{n}{\log(\log(n))} \left(\wh\LL_E \left( \vt^E_0, Y^n \right) - \wh\LL_E (\widehat{\vt}^E_0, Y^n)+
[N(\Theta_0) - N(\Theta)] \frac{C(n)}{\log(\log(n))} \right) \notag \\
&\leq& \lambda_{\text{max}}(\HH(\vt^E_0)^{-\frac12} \II(\vt^E_0) \HH(\vt^E_0)^{-\frac12})  - \limsup_{n \to \infty}\frac{C(n)}{\log(\log(n))} \quad \Pas, \label{lasteqcons}
\end{eqnarray*}
since $N(\Theta_0) - N(\Theta) \leq -1$. Hence, if $\limsup_{n \to \infty} \frac{ C(n)}{\log(\log(n))} > \lambda_{\text{max}}(\HH(\vt^E_0)^{-\frac12} \II(\vt^E_0) \HH(\vt^E_0)^{-\frac12})$, we  obtain
$$\p\left(\limsup_{n \to \infty} \frac{n}{\log(\log(n))} \left( \text{IC}_n(\Theta_{0}) - \text{IC}_n(\Theta_{}) \right) < 0\right)=1.$$
Finally, if $\limsup_{n \to \infty} C(n)/\log(\log(n))= 0$, then from $\wh \LL_0 \left(\wh{\vt}_0^n, Y^n \right)-\wh\LL \left( \wh \vt^n, Y^n \right)\geq 0$ it clearly follows that
$$\p\left(\limsup_{n \to \infty} \frac{n}{\log(\log(n))} \left( \text{IC}_n(\Theta_{0}) - \text{IC}_n(\Theta_{}) \right) > 0 \right)=1,$$
so that strong consistency cannot hold.\\
\textsl{(b)} \, Again we distinguish the two cases from part (a). Case 1 is dealt with analogously as in (a), so that we only need to give detailed arguments for case $2$. Suppose therefore that $\Theta_0$ is nested in $\Theta$. Define the map $f: \Theta_0 \to \Theta$ by $f(\vt) = F\vt +c$, where $F$ and $c$ are as in the definition of nested spaces. Then,
a Taylor expansion of $\wh\LL \left( f(\wh{\vt}^{n}_0), Y^{n} \right)$ around $\widehat{\vt}^{n}_{}$ results in
\begin{align}
\wh\LL_0 \left( \vt, Y^{n} \right)=\wh\LL \left( f(\wh{\vt}^{n}_0), Y^{n} \right) = \wh\LL \left( \widehat{\vt}^{n}_{}, Y^{n} \right) + \frac12 \left( \widehat{\vt}^{n}_{} - f(\wh{\vt}^{n}_0)\right)^T \nabla^2_\vt \wh\LL \left( \overline{\vt}^n, Y^{n} \right) \left( \widehat{\vt}^{n}_{} - f(\wh{\vt}^{n}_0) \right) \label{proofcons2}
\end{align}
with $\overline{\vt}^n$ such that $\| \overline{\vt}^n - \wh \vt^n \| \leq \| f( \wh \vt^n_0) - \wh \vt^n \|$.
Plugging  \eqref{proofcons2} into \eqref{strongcons1} gives
\begin{align}
\text{IC}_n(\Theta_{0}) - \text{IC}_n(\Theta_{}) = \frac12 \left( \widehat{\vt}^{n}_{} - f(\wh{\vt}^{n}_0) \right)^T \nabla^2_\vt \wh\LL \left( \overline{\vt}^n, Y^{n} \right) \left( \widehat{\vt}^{n}_{} - f(\wh{\vt}^{n}_0) \right) 
+ [N(\Theta_0) - N(\Theta)] \frac{C(n)}{n}. \label{proofcons3}
\end{align}
In order to be able to show weak consistency, we will study the behavior of the random variable $\widehat{\vt}^{n}_{} - f(\wh{\vt}^{n}_0)$.
Note that $\wh\LL_0 \left( \vt, Y^{n} \right)=\wh\LL \left( f(\vt), Y^{n} \right)$ for $\vt\in\Theta_0$,
so that by the chain rule
$$\nabla_\vt \wh\LL_0( \vt_0, Y^n) = F^T\nabla_\vt \wh\LL( f(\vt_0), Y^n) = F^T \nabla_\vt \wh\LL( \vt^\ast, Y^n).$$
Moreover, 
\begin{eqnarray*}
    f(\wh{\vt}^{n}_0)-\vartheta^*=f(\wh{\vt}^{n}_0)-f(\vt_0) = F (\widehat{\vt}^{n}_0 - \vt_0).
\end{eqnarray*}
As in \eqref{invL2}, we also have
\begin{align*}
\widehat{\vt}^n - \vt^* = - \left(\nabla^2_\vt \wh\LL(  \check{\vt}^n, Y^n)   \right)^{-1} \nabla_\vt \wh\LL( \vt^*, Y^n), \\
\widehat{\vt}^{n}_0 - \vt_0 = - \left(\nabla^2_\vt \wh\LL_0( \tilde{\vt}^n, Y^n)   \right)^{-1} \nabla_\vt \wh\LL_0( \vt_0, Y^n),
\end{align*}
where $\check{\vt}^n$ is such that $\| \check{\vt}^n - \vt^\ast \| \leq \| \wh \vt^n - \vt^\ast\|$ and $\tilde{\vt}^n$ is such that $ \| \tilde{\vt}^n - \vt_0 \| \leq \| \widehat{\vt}^{n}_0 - \vt_0 \|$. In particular, $\check{\vt}^n\to \vt^\ast$ and $\tilde{\vt}^n\to\vt_0$ $\Pas$ as $n\to\infty$.
To summarize,
\begin{eqnarray*}
     \widehat{\vt}^{n}_{} - f(\wh{\vt}^{n}_0)
      &=&\widehat{\vt}^{n}_{}-\vt^*-F(\wh{\vt}^{n}_0-{\vt}_0)\\
        &=&\left[- \left( \nabla^2_\vt \wh\LL( \check{\vt}^n, Y^n)\right)^{-1}+ F \left(  \nabla^2_\vt \wh\LL_0( \tilde{\vt}^n, Y^n)   \right)^{-1} F^T \right]  \nabla_\vt \wh\LL( \vt^*, Y^n).
\end{eqnarray*}
An application of \Cref{helpprop}(c) and (d) results in
\begin{align*}
    \sqrt{n}(\widehat{\vt}^{n}_{} - f(\wh{\vt}^{n}_0)) \stackrel{\mathcal{D}}{\to} \left[-\HH(\vt^*)^{-1}+ F \HH(\vt_0)^{-1} F^T\right]\mathcal{N}(0_{N(\Theta)},\II(\vt^*)) =: \mathbf{N}_F.
\end{align*}
Since by the chain rule $\HH(\vt_0)=F^T\HH(\vt^*)F$ the random vector $\mathbf{N}_F$ is distributed as \linebreak $\mathcal{N}(0_{N(\Theta)},\mathcal{M}_F(\vt^*)\II(\vt^*)\mathcal{M}_F(\vt^*))$ (note that $\mathcal{M}_F(\vt^*)$ is symmetric).
Finally, by  \eqref{proofcons3}, \Cref{helpprop}(d) and $C(n) \to \infty$ as $n \to \infty$,
\begin{eqnarray*}\label{proofcons12}
\lefteqn{\P ( \text{IC}_n(\Theta_{0}) - \text{IC}_n(\Theta_{}) < 0 )}\nonumber \\
&\stackrel{}{=}& \P \left( \frac12 \sqrt{n} \left( \widehat{\vt}^{n}_{} - f(\wh{\vt}^{n}_0) \right)^T  \nabla^2_\vt \wh\LL \left( \overline{\vt}^n, Y^{n} \right) \sqrt{n} \left( \widehat{\vt}^{n}_{} - f(\wh{\vt}^{n}_0) \right)
< - [N(\Theta_0) - N(\Theta)] C(n) \right) \nonumber\\
&\overset{n \to \infty}{\to}& \P \left( \mathbf{N}_F^T \HH(\vt^*) \mathbf{N}_F < \infty \right).
\end{eqnarray*}
Using \cite[Eq. (1.1)]{imhof} gives
$\mathbf{N}_F^T \HH(\vt^*) \mathbf{N}_F \stackrel{\DD}{=} \sum_{i=1}^{N(\Theta)} \lambda_i \chi^2_i,$
where $(\chi^2_i)$ is a sequence of independent $\chi^2$ random variables with one degree of freedom and
the $\lambda_i$ are the eigenvalues of \linebreak $\HH(\vt^*)^{\frac12} \MM_F(\vt^*) \II(\vt^*) \MM_F(\vt^*) \HH(\vt^*)^{\frac12}$. Since
$\rank(\mathcal{M}_F(\vt^*))=N(\Theta)-N(\Theta_0)$ and $\HH(\vt^*)^{\frac12}$ and $\II(\vt^*)$ have full rank, the number of strictly
positive eigenvalues of \linebreak $\HH(\vt^*)^{\frac12} \MM_F(\vt^*) \II(\vt^*) \MM_F(\vt^*) \HH(\vt^*)^{\frac12}$ is $N(\Theta)-N(\Theta_0)$.
Hence, the result follows.\\
\textsl{(c)} \, With the arguments in (b) we obtain the statement.
\end{proof}

\begin{remark}\label{consistremark} $\mbox{}$\\
 (a) \, A conclusion of \Cref{strongcons}(a) is that strong consistency of the information criterion always holds,  independent of the process $Y$ generating the observed data and hence
  $\vartheta_0^E$, if \linebreak $\limsup_{n \to \infty} C(n)/\log(\log(n)) = \infty$. \\
(b)\, Let $\Theta_0$ be nested in $\Theta$ with map $F$. 
Then it can be shown as in the proof of \Cref{LLasymp} that
\begin{eqnarray*}
\lefteqn{\limsup_{n \to \infty}\frac{n}{\log(\log(n))}\left(\text{IC}_n(\Theta_{0}) - \text{IC}_n(\Theta_{})\right)}\\
&&= \lambda_{\text{max}}( \MM_F(\vt^\ast)^{\frac12} \II(\vt^\ast) \MM_F(\vt^\ast)^{\frac12})  + \limsup_{n \to \infty} [N (\Theta_0) - N(\Theta)] \frac{C(n)}{\log(\log(n))}.
\end{eqnarray*}
This implies that the information criterion $\text{IC}_n$ is not strongly consistent iff  \linebreak
$\limsup_{n \to \infty} C(n)/\log(\log(n)) < C^*$, where
$$C^\ast := \max_F \frac{\lambda_{\text{max}}( \MM_F(\vt^\ast)^{\frac12} \II(\vt^\ast) \MM_F(\vt^\ast)^{\frac12})}{N(\Theta)-N(\Theta_0)}
\leq \lambda_{\text{max}}(\HH(\vt^E_0)^{-\frac12} \II(\vt^E_0) \HH(\vt^E_0)^{-\frac12}).$$
Since the structure of $\HH(\vt^\ast)$ and $\II(\vt^\ast)$ is in general not known, it is difficult to calculate $C^*$ explicitly.
However, in the Gaussian case we will derive that $C^*=2$ (cf. \Cref{BMprop}). \\
(c) \, We would like to note that these results are similar to
the statement of \cite[Corollary 5.3]{sinwhite} under different model assumptions. However, the authors present only sufficient conditions
for strong consistency, where we also have a necessary condition (see \Cref{remarkwhite} as well).\\
(d) \, As the proof of \Cref{strongcons}(a), Case 1, shows, for spaces $\Theta$ with $\text{MCARMA}(A_{\vt},B_{\vt},C_{\vt},L_{\vt})\not=Y$  for every $\vt \in \Theta$ a necessary and sufficient condition for choosing the correct
parameter space asymptotically with probability 1 is   $\lim_{n \to \infty} C(n)/n = 0$. Only if we allow nested models as well the additional condition $\limsup_{n \to \infty} C(n)/\log(\log(n))>C^*$
becomes necessary. The probability in \Cref{thmcons1}(c) is the {overfitting probability}.
\end{remark}

To wrap up this section, we want to study the special case where the observed MCARMA process
is driven by a Brownian motion. Some of the technical auxiliary results for the proof are given in the appendix.
\begin{corollary}\label{BMprop}
Assume that the L\'{e}vy process $L$ which drives the observed process $Y$ is a Brownian motion.
Then $IC_n$ is strongly consistent iff
$\limsup_{n \to \infty} C(n)/\log(\log(n))>2.$
\end{corollary}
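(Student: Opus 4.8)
The plan is to invoke \Cref{strongcons}(a) together with \Cref{consistremark}(b), so that the whole problem reduces to computing the constant $C^\ast$ in the Gaussian case and showing it equals $2$. Recall that by \Cref{consistremark}(b) the criterion $IC_n$ fails to be strongly consistent precisely when $\limsup_{n\to\infty} C(n)/\log(\log(n)) < C^\ast$, where
$$C^\ast = \max_F \frac{\lambda_{\text{max}}(\MM_F(\vt^\ast)^{\frac12}\II(\vt^\ast)\MM_F(\vt^\ast)^{\frac12})}{N(\Theta)-N(\Theta_0)} \leq \lambda_{\text{max}}(\HH(\vt^E_0)^{-\frac12}\II(\vt^E_0)\HH(\vt^E_0)^{-\frac12}),$$
and by \Cref{strongcons}(a) strong consistency holds as soon as $\limsup C(n)/\log(\log(n))$ exceeds the right-hand upper bound. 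Hence, if I can show that in the Brownian case $\lambda_{\text{max}}(\HH(\vt^E_0)^{-\frac12}\II(\vt^E_0)\HH(\vt^E_0)^{-\frac12}) = 2$, then the upper bound and lower bound for $C^\ast$ coincide, giving $C^\ast = 2$ and the stated iff.

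The heart of the matter is therefore the identity $\II(\vt^\ast) = 2\,\HH(\vt^\ast)$ when the driving Lévy process is Brownian motion and $\vt^\ast=\vt_0$ is a true parameter, i.e. the model is correctly specified at $\vt_0$. This is the familiar information matrix equality: for a correctly specified Gaussian model the asymptotic covariance of the score equals twice the expected Hessian (the factor $2$ appearing because $\LL$ is $-2/n$ times the log-likelihood rather than $-1/n$ times it). First I would use the fact that when $L$ is Brownian motion and $\vt_0$ generates $Y$, the pseudo-innovations $\epsilon_{\vt_0,k}$ are not merely uncorrelated but genuinely i.i.d.\ Gaussian with mean zero and covariance $V_{\vt_0}$, and the model $(Y(kh))_{k\in\Z}$ is a Gaussian state space model whose exact Gaussian likelihood is (asymptotically) $\LL(\vt_0,Y^n)$. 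From the representation \eqref{partdevl} of $\partial_i l_{\vt_0,k}$ and Gaussianity one computes $\Cov(\partial_i l_{\vt_0,k},\partial_j l_{\vt_0,k+\nu})$ in closed form; the cross terms in the two summands of \eqref{partdevl} vanish because odd moments of the centred Gaussian $\epsilon_{\vt_0,k}$ are zero, and for $\nu\neq 0$ the whole covariance vanishes by independence, so $\II(\vt_0)=\Cov(\nabla_\vt l_{\vt_0,1})$. A parallel computation of $\HH(\vt_0)=\E[\nabla^2_\vt l_{\vt_0,1}]$, again exploiting that $\E[\epsilon_{\vt_0,1}\epsilon_{\vt_0,1}^T]=V_{\vt_0}$ kills the terms linear in $\epsilon$, yields exactly $\II(\vt_0) = 2\,\HH(\vt_0)$. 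Substituting this into $\HH(\vt^E_0)^{-\frac12}\II(\vt^E_0)\HH(\vt^E_0)^{-\frac12} = 2\,I$ gives $\lambda_{\text{max}} = 2$, as needed. These computations are collected in the appendix referred to in the statement.

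With $C^\ast$ pinned to $2$, the corollary follows: \Cref{strongcons}(a) gives strong consistency when $\limsup_{n\to\infty} C(n)/\log(\log(n)) > 2$, and \Cref{consistremark}(b) gives the failure of strong consistency when $\limsup_{n\to\infty} C(n)/\log(\log(n)) < 2$; the boundary case $\limsup = 2$ is excluded from the "iff" precisely because it is the critical value where the $\limsup$ comparison in \Cref{LLasymp} is an equality rather than a strict inequality, so neither direction of the argument closes. The main obstacle I anticipate is not conceptual but bookkeeping: carefully justifying the interchange of differentiation, expectation, and the infinite moving-average sums \eqref{seriesrep4} when evaluating the second-moment and Hessian expressions, and making sure the appendix lemmas handle the contribution of $\det V_\vt$ and its derivatives (the trace terms in \eqref{partdevl}) so that the information matrix equality comes out with the clean constant $2$ and no stray terms depending on $h$ or on the eigenvalues of $A_{\vt_0}$.
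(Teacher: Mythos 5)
Your sufficiency direction is fine and matches the paper: establishing the information--matrix equality $\II(\vt^\ast)=2\HH(\vt^\ast)$ for a correctly specified Gaussian model (i.i.d.\ Gaussian innovations, independence of $\epsilon_{\vt_0,k}$ from $\partial_i\epsilon_{\vt_0,k}$, vanishing odd moments) is exactly the content of \Cref{BMlemma2}(a), and it yields $\lambda_{\text{max}}(\HH(\vt^E_0)^{-\frac12}\II(\vt^E_0)\HH(\vt^E_0)^{-\frac12})=2$, so \Cref{strongcons}(a) gives strong consistency when $\limsup_{n\to\infty}C(n)/\log(\log(n))>2$.

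The gap is in the necessity direction. You write that once the upper bound equals $2$, ``the upper bound and lower bound for $C^\ast$ coincide,'' but \Cref{consistremark}(b) supplies only the \emph{upper} bound $C^\ast\leq\lambda_{\text{max}}(\HH(\vt^E_0)^{-\frac12}\II(\vt^E_0)\HH(\vt^E_0)^{-\frac12})$; no lower bound on $C^\ast$ appears anywhere in your argument. Since $C^\ast$ is a maximum of ratios whose denominators $N(\Theta)-N(\Theta_0)$ can be arbitrary, knowing the numerators are bounded by $2$ does not prevent $C^\ast$ from being strictly smaller than $2$ (e.g.\ if every admissible nested pair had codimension at least $2$). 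To close the ``only if'' you must \emph{exhibit} a nested pair attaining the ratio $2$: a parameter space $\Theta_0$ satisfying \autoref{as_D}, containing a true parameter, nested in $\Theta_0^E$ with $N(\Theta_0^E)-N(\Theta_0)=1$, and with $\lambda_{\text{max}}(\MM_F(\vt_0^E)^{\frac12}\II(\vt_0^E)\MM_F(\vt_0^E)^{\frac12})=2$. This is precisely \Cref{BMlemma2}(b), which the paper proves by taking $F$ to consist of $N(\Theta_0^E)-1$ eigenvectors of $\HH(\vt_0^E)$, defining $\Theta_0$ through the affine reparametrization $\vt\mapsto F\vt+(\vt_0^E-FF^T\vt_0^E)$, and checking that the remaining eigenvector is an eigenvector of $\MM_F(\vt_0^E)^{\frac12}\II(\vt_0^E)\MM_F(\vt_0^E)^{\frac12}$ with eigenvalue $2$. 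Without this construction (and a verification that the resulting $\Theta_0$ really satisfies \autoref{as_D} so that it is admissible in \Cref{defcons}), your argument only shows that $\limsup_{n\to\infty}C(n)/\log(\log(n))>2$ is sufficient, not that it is necessary.
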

\begin{proof}
From \Cref{BMlemma2}(b) we know that there exists a space ${\Theta}_0$ such that there is a  $\vt_0\in\Theta_0$ with
$\text{MCARMA}(A_{\vt_0},B_{\vt_0},C_{\vt_0},L_{\vt_0})=Y$ and
 ${\Theta}_0$ is nested in $\Theta_0^E$ with map $F$. Moreover, $N(\Theta_0)=N(\Theta_0^E)-1$ and
$$\lambda_{\text{max}}(\MM_F(\vt_0^E)^{\frac12} \II(\vt_0^E) \MM_F(\vt_0^E)^{\frac12}) = 2.$$
Additionally, a conclusion of \Cref{BMlemma2}(a) is that
 $$\lambda_{\text{max}}(\HH(\vt^E_0)^{-\frac12} \II(\vt^E_0) \HH(\vt^E_0)^{-\frac12})=2\lambda_{\max}(I_{N(\Theta_0^E)\times N(\Theta_0^E)})=2.$$
 Therefore the  statement  follows directly from \Cref{strongcons}(a) and \Cref{consistremark}(b).
\end{proof}
The results of this section are analogous to the ones obtained for ARMAX processes with i.i.d. noise  in \cite[Theorem 5.5.1]{hannandeistler}.

\section{AIC and BIC} \label{Section:5}
In this chapter, we transfer the two  most well-known information criteria, the AIC and BIC, to the MCARMA framework, highlight the main ideas in their development and
 apply the results of \Cref{sec:IC} to them.
\subsection{The Akaike Information Criterion (AIC)}\label{sec:AIC}
Historically, Akaike's idea was to study the Kullback-Leibler discrepancy of different models and choose the one which minimizes this quantity. In this section, we give arguments why this approach is also sensible in the case of MCARMA models.

As a starting point, let $g,f$ be probability densities on $\R^n$.  Then the {Kullback-Leibler discrepancy}
between $g$ and $f$ is
\begin{eqnarray*}
    K(g \mid f):=\int_{\R^n}f(x)\log\left(\frac{f(x)}{g(x)}\right)\, dx
        =\E_f[\log\left({f}\right)]-\E_f[\log\left(g\right)]
        \geq 0.
\end{eqnarray*}
Equality holds only for $g=f$ (cf. \cite[p. 302]{brockwell}).
Let now $(f_\vt)_{\vt\in\Theta}$ be a family of densities on $\R^n$ and fix one ``true'' density $f_{\vt_0}$. With $\E_{\vt_0}$ we denote
the expectation regarding the distribution with density $f_{\vartheta_0}$.
Then, the density that comes closest to $f_{\vt_0}$ in the Kullback-Leibler sense is given by the one associated to
\begin{eqnarray*}
    \argmin_{\vartheta\in\Theta}K(f_\vt|f_{\vt_0})=\argmin_{\vartheta\in\Theta}\{\E_{\vt_0}[\log\left({f_{\vt_0}}\right)]-\E_{\vt_0}[\log\left(f_{\vt}\right)]\}
        =\argmin_{\vartheta\in\Theta} \left\{ -\frac{2}{n}\E_{\vt_0}[\log\left(f_{\vt}\right)] \right\}.
\end{eqnarray*}
In our context $f_{\vt}$ denotes the density of the observations $Y^n$. The problem is that the right-hand side is not directly calculable so that we have to approximate it. To this end, let $\YY^n$ be an independent copy of
$Y^n$ and $\widehat{\vt}^n(Y^n)$ be the QMLE in $\Theta$ based on the observation $Y^n$. Then we use the approximation
\begin{eqnarray} \label{5.1}
     \min_{\vartheta\in\Theta} \left[ -\frac{2}{n}\E_{\vt_0}[\log\left(f_{\vt}\right)] \right]&\approx& -\frac{2}{n}\E_{\vt_0}[\log(f_{\widehat{\vt}^n(Y^n)}) \mid Y^n]
     =-\frac{2}{n}\E[\log(f_{\widehat{\vt}^n(Y^n)}(\YY^n)) \mid Y^n]\nonumber\\
     &\approx& \E\left[ \wh\LL(\widehat{\vt}^n(Y^n), \YY^n) \mid Y^n\right].
\end{eqnarray}
The right-hand side can again be approximated by the following theorem:
\begin{theorem} \label{Theorem 5.1}
Assume that the space $\Theta$ with associated family of continuous-time state space models $(A_{\vt}, B_{\vt}, C_{\vt}, L_{\vt})_{\vt \in \Theta}$ satisfies \autoref{as_D}. Then, as $n\to\infty$,
\begin{eqnarray*}
n\left(\wh\LL(\widehat{\vt}^n(Y^n), \YY^n)  - \left[\wh\LL(\widehat{\vt}^n(\YY^n), \YY^n)- \frac{\tr \left( \II(\vt^*) \HH^{-1}(\vt^*) \right)}{n}\right]\right)
    \stackrel{\DD}{\to} Z_{\vt^*},
\end{eqnarray*}
where $Z_{\vt^*}$ is a random variable with expectation $\E[Z_{\vt^*}]=0$. In particular, as $n\to\infty$,
\begin{eqnarray*}
\wh\LL(\widehat{\vt}^n(Y^n), \YY^n)  - \left[\wh\LL(\widehat{\vt}^n(\YY^n), \YY^n)- \frac{\tr \left( \II(\vt^*) \HH^{-1}(\vt^*)\right)}{n}\right]
    \stackrel{\P}{\to} 0.
\end{eqnarray*}
\end{theorem}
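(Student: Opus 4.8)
The plan is to perform second-order Taylor expansions of the two quasi log-likelihoods occurring in the bracket, \emph{both} evaluated on the independent copy $\YY^n$, to let the common term $\wh\LL(\vt^*,\YY^n)$ cancel, and to identify the limiting law of $n$ times the resulting difference as a quadratic form in the jointly Gaussian limits of the score statistics. The deterministic constant $\tr(\II(\vt^*)\HH^{-1}(\vt^*))/n$ is then exactly the centering that makes the limit have expectation zero.

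First I would record the stochastic expansions of the two QMLEs. By \autoref{idprop} we have $\wh\vt^n(Y^n)\to\vt^*$ and $\wh\vt^n(\YY^n)\to\vt^*$ $\Pas$, and since $\vt^*$ is interior to $\Theta$ by \ref{as_D8}, for $n$ large the first-order conditions $\nabla_\vt\wh\LL(\wh\vt^n(Y^n),Y^n)=0$ and $\nabla_\vt\wh\LL(\wh\vt^n(\YY^n),\YY^n)=0$ are satisfied. A first-order expansion of the score around $\vt^*$, exactly as in the proof of \autoref{LLasymp}, together with \Cref{helpprop}(d) and (e) (the Hessian at the intermediate point converging $\Pas$ to the non-singular matrix $\HH(\vt^*)$, cf.\ \ref{as_D11}), yields $\sqrt n\,(\wh\vt^n(Y^n)-\vt^*)=-\bigl(\nabla^2_\vt\wh\LL(\bar\vt^n_1,Y^n)\bigr)^{-1}\sqrt n\,\nabla_\vt\wh\LL(\vt^*,Y^n)$ and the analogous identity for $\YY^n$. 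By \Cref{helpprop}(c) the two score vectors $\sqrt n\,\nabla_\vt\wh\LL(\vt^*,Y^n)$ and $\sqrt n\,\nabla_\vt\wh\LL(\vt^*,\YY^n)$ are each asymptotically $\NN(0,\II(\vt^*))$, and since $Y^n$ and $\YY^n$ are independent so are their weak limits, say $\xi'$ and $\xi$. Combining with the two expansions via Slutsky's lemma, the triple $\bigl(\sqrt n\,\nabla_\vt\wh\LL(\vt^*,\YY^n),\ \sqrt n(\wh\vt^n(\YY^n)-\vt^*),\ \sqrt n(\wh\vt^n(Y^n)-\vt^*)\bigr)$ converges jointly in distribution to $\bigl(\xi,\ -\HH(\vt^*)^{-1}\xi,\ -\HH(\vt^*)^{-1}\xi'\bigr)$ with $\xi,\xi'$ independent and $\NN(0,\II(\vt^*))$-distributed.

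Next I would expand $\wh\LL(\cdot,\YY^n)$. A second-order Taylor expansion of $\vt\mapsto\wh\LL(\vt,\YY^n)$ around $\vt^*$ evaluated at $\wh\vt^n(Y^n)$, combined with a second-order expansion around the interior minimizer $\wh\vt^n(\YY^n)$ (whose gradient vanishes) evaluated at $\vt^*$, lets the term $\wh\LL(\vt^*,\YY^n)$ cancel and gives
\[ n\bigl(\wh\LL(\wh\vt^n(Y^n),\YY^n)-\wh\LL(\wh\vt^n(\YY^n),\YY^n)\bigr)= A_n+\tfrac12 B_n+\tfrac12 C_n, \]
where $A_n$ is the bilinear form in $\sqrt n\,\nabla_\vt\wh\LL(\vt^*,\YY^n)$ and $\sqrt n(\wh\vt^n(Y^n)-\vt^*)$, and $B_n,C_n$ are the quadratic forms of $\sqrt n(\wh\vt^n(Y^n)-\vt^*)$ and $\sqrt n(\wh\vt^n(\YY^n)-\vt^*)$ with the Hessians $\nabla^2_\vt\wh\LL(\cdot,\YY^n)$ taken at intermediate points converging $\Pas$ to $\vt^*$. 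Using the uniform almost-sure convergence $\nabla^2_\vt\wh\LL(\cdot,\YY^n)\to\HH(\cdot)$ from \Cref{helpprop}(d) and the continuous mapping theorem applied to the joint convergence above, the right-hand side converges in distribution to $\xi^T(-\HH^{-1}\xi')+\tfrac12(-\HH^{-1}\xi')^T\HH(-\HH^{-1}\xi')+\tfrac12(-\HH^{-1}\xi)^T\HH(-\HH^{-1}\xi)=\tfrac12(\xi-\xi')^T\HH(\vt^*)^{-1}(\xi-\xi')$. Since $\xi-\xi'\sim\NN(0,2\II(\vt^*))$, the identity $\E[\zeta^TM\zeta]=\tr\bigl(M\Cov(\zeta)\bigr)$ shows that the mean of this limit equals $\tr(\II(\vt^*)\HH^{-1}(\vt^*))$; hence correcting $\wh\LL(\wh\vt^n(\YY^n),\YY^n)$ by the deterministic term $\tr(\II(\vt^*)\HH^{-1}(\vt^*))/n$ as in the statement produces the centered limit $Z_{\vt^*}$ with $\E[Z_{\vt^*}]=0$. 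The second, weaker assertion then follows at once, since by \Cref{helpprop}(b) and \autoref{idprop} both quasi log-likelihoods converge $\Pas$ to $\mathscr{Q}(\vt^*)$ while $\tr(\II(\vt^*)\HH^{-1}(\vt^*))/n\to0$.

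The step I expect to be the main obstacle is the bookkeeping underlying the joint weak convergence together with the control of the several intermediate Taylor points: one must verify that the Hessians evaluated at \emph{all} of these intermediate points converge to the common limit $\HH(\vt^*)$ — which is where the uniform almost-sure convergence in \Cref{helpprop}(d) and the continuity of $\vt\mapsto\HH(\vt)$ enter — and one must keep track of the fact that $\sqrt n(\wh\vt^n(Y^n)-\vt^*)$ is independent of both $\sqrt n\,\nabla_\vt\wh\LL(\vt^*,\YY^n)$ and $\sqrt n(\wh\vt^n(\YY^n)-\vt^*)$ (because the two data blocks are independent) while the latter two are asymptotically linearly dependent, so that the limiting quadratic form acquires precisely the structure $\tfrac12(\xi-\xi')^T\HH(\vt^*)^{-1}(\xi-\xi')$. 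Non-singularity of $\HH(\vt^*)$ and of $\II(\vt^*)$ (guaranteed by \ref{as_D11} and \Cref{helpprop}) is used throughout so that all inverses and all the quadratic-form expectations are well-defined.
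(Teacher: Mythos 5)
Your proposal is correct and follows essentially the same route as the paper: a second-order Taylor expansion of the quasi log-likelihood, joint asymptotic normality of the two independent QMLEs via \Cref{idprop} and \Cref{helpprop}(c)--(e), the continuous mapping theorem, and the computation $\E[\zeta^T M\zeta]=\tr(M\Cov(\zeta))$ to identify $\tr(\II(\vt^*)\HH^{-1}(\vt^*))$ as the correct centering. The only (immaterial) difference is that you expand twice through $\vt^*$, retaining a score cross-term, whereas the paper performs a single second-order expansion of $\wh\LL(\cdot,Y^n)$ between $\wh\vt^n(Y^n)$ and $\wh\vt^n(\YY^n)$ so that the gradient term vanishes; both yield the same limit $\tfrac12(\xi-\xi')^T\HH(\vt^*)^{-1}(\xi-\xi')$ in distribution.
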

\begin{proof}
A second-order Taylor expansion of $\wh\LL(\widehat{\vt}^n(\YY^n), Y^n)$ around $\widehat{\vt}^n(Y^n)$ gives
\begin{eqnarray*}
\wh \LL(\widehat{\vt}^n(\YY^n), Y^n)= \wh\LL(\widehat{\vt}^n(Y^n), Y^n) 
                                             + \frac{1}{2} \left( \widehat{\vt}^n(\YY^n) - \widehat{\vt}^n(Y^n) \right)^T \nabla^2_\vt \wh\LL (\overline{\vt}^n,Y^n) \left( \widehat{\vt}^n(\YY^n) - \widehat{\vt}^n(Y^n) \right),
\end{eqnarray*}
 where $\|\overline{\vt}^n-\widehat{\vt}^n(Y^n)\|\leq \|\widehat{\vt}^n(\YY^n)-\widehat{\vt}^n(Y^n)\|$.
Hence,
\begin{eqnarray*}
    \lefteqn{\wh\LL(\widehat{\vt}^n(\YY^n), Y^n)   - \wh\LL(\widehat{\vt}^n(Y^n), Y^n)}\\
        &&=\frac{1}{2}\text{tr} \left( \nabla^2_\vt \wh\LL (\overline{\vt}^n,Y^n)\left(\widehat{\vt}^n(\YY^n) - \widehat{\vt}^n(Y^n) \right)  \left( \widehat{\vt}^n(\YY^n) - \widehat{\vt}^n(Y^n)\right)^T \right).
\end{eqnarray*}
On the one hand, since both $\widehat{\vt}^n(Y^n)$ and $ \widehat{\vt}^n(\YY^n)$ converge $\p$-a.s. to $\vt^*$,
the vector $\overline{\vt}^n\to\vt^*$ $\p$-a.s. as well. On the other hand, by the independence of
 $Y^n$ and $\YY^n$, the random vectors $\widehat{\vt}^n(\YY^n)$
 and $\widehat{\vt}^n(Y^n)$ are independent as well.   By \Cref{idprop}, as
 $n\to\infty$,
\begin{eqnarray*}
 \sqrt{n} \left(\widehat{\vt}^n(Y^n)- \vt^*, \widehat{\vt}^n(\YY^n)-\vt^*\right) \stackrel{\DD}{\to} (\mathcal{N}_1,\mathcal{N}_2),
 \end{eqnarray*}
 where $\mathcal{N}_1,\mathcal{N}_2$ are independent, $\NN(0_{N(\Theta)}, {\HH}^{-1}(\vt^*) \II(\vt^*) {\HH}^{-1}(\vt^*))$-distributed random vectors. A conclusion of \Cref{helpprop}(d) is
$\nabla^2_\vt \wh\LL( \overline{\vt}^n, Y^n ) 
 {\to} \HH(\vt^*)$ $ \p\text{-a.s.}$ Hence,
a continuous mapping theorem gives
 \begin{eqnarray*}
    n\left(\wh\LL(\widehat{\vt}^n(\YY^n), Y^n)   - \wh\LL(\widehat{\vt}^n(Y^n), Y^n)\right)\stackrel{\DD}{\to}
    \frac{1}{2}\tr\left(\HH(\vt^*)(\mathcal{N}_1+\mathcal{N}_2)(\mathcal{N}_1+\mathcal{N}_2)^T\right),
 \end{eqnarray*}
 and by the independence of $\mathcal{N}_1$ and $\mathcal{N}_2$ we have
 \begin{eqnarray*}
     \E\left[\HH(\vt^*)(\mathcal{N}_1+\mathcal{N}_2)(\mathcal{N}_1+\mathcal{N}_2)^T\right]
     =2\HH(\vt^*)\E\left[\mathcal{N}_1\mathcal{N}_1^T\right]=2\II(\vt^*) {\HH}^{-1}(\vt^*).
 \end{eqnarray*}
The statement follows then obviously since the expectation of the trace is the trace of the expectation.
\end{proof}

As a consequence of \eqref{5.1} and \Cref{Theorem 5.1} we receive the approximation
\begin{eqnarray*}
    \min_{\vartheta\in\Theta} \left[ -\frac{2}{n}\E_{\vt_0}[\log\left(f_{\vt}\right)] \right]\approx \wh\LL(\wh \vt^n(\YY^n),\YY^n)+\frac{\tr \left( \II(\vt^*) \HH^{-1}(\vt^*)\right)}{n},
\end{eqnarray*}
which becomes our information criterion via  the following definition:
\begin{definition}\label{aicdef}
For a space $\Theta$ with associated family of continuous-time state space models \linebreak $(A_{\vt}, B_{\vt}, C_{\vt}, L_{\vt})_{\vt \in \Theta}$ that satisfies \autoref{as_D}, the
\textsl{Akaike Information Criterion (AIC)} is defined as
\begin{eqnarray*}
    AIC_n(\Theta)=\wh\LL(\wh \vt^n,Y^n)+\frac{\tr \left( \II(\vt^*) \HH^{-1}(\vt^*)\right)}{n}.
\end{eqnarray*}
\end{definition}
In general $\II(\vt^*)$ and $\HH(\vt^*)$ are not known.
For practical purposes,  they have to be estimated. For both, estimators are known and can be found at the end of \cite[Section 2.2]{schlemmstelzer}, for example.

\begin{remark} \label{remark AIC Brownian}
If the L\'{e}vy process $L$ which drives the observed process $Y$ is a Brownian motion and $\text{MCARMA}(A_{\vt^\ast}, B_{\vt^\ast}, C_{\vt^\ast}, L_{\vt^\ast}) = Y$, we have
 $ \II(\vt^*)= 2\HH(\vt^*)$ by \Cref{BMlemma2} and hence, the AIC  reduces to
\begin{eqnarray*}
    AIC_n(\Theta)=\wh\LL(\wh \vt^n,Y^n)+\frac{2 N(\Theta)}{n}.
\end{eqnarray*}
\end{remark}
The form of the AIC given in this remark coincides with Akaike's original definition (cf. \cite{akaike}). For these reasons, it suggests itself to define an alternative version of the AIC as follows:
\begin{definition}\label{caicdef}
For a space $\Theta$ with associated family of continuous-time state space models \linebreak $(A_{\vt}, B_{\vt}, C_{\vt}, L_{\vt})_{\vt \in \Theta}$ that satisfies \autoref{as_D}, the
\textsl{Classical Akaike Information Criterion (CAIC)} is defined as
\begin{eqnarray*}
    CAIC_n(\Theta)=\wh\LL(\wh \vt^n,Y^n)+\frac{2 N(\Theta)}{n}.
\end{eqnarray*}
\end{definition}
This criterion avoids the additional work of estimating the matrices  $ \II(\vt^*)$ and $\HH^{-1}(\vt^*)$ appearing in the AIC, which comes at the cost of not being exact when the driving L\'{e}vy process is not a Brownian motion. For both versions of the AIC, we can immediately make a statement about consistency:
\begin{theorem}
Both the AIC and the CAIC are neither strongly nor weakly consistent.
\end{theorem}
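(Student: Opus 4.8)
The plan is to recognize both criteria as instances of the information criterion \eqref{ICL} whose penalty sequence stays bounded, and then to invoke \Cref{strongcons}(b),(c). The CAIC is immediate: it is literally of the form \eqref{ICL} with $C(n)\equiv 2$, so $\limsup_{n\to\infty}C(n)=2<\infty$, and \Cref{strongcons}(b) directly yields that the CAIC is neither weakly nor strongly consistent. For the AIC I would write its penalty term $\tr(\II(\vt^\ast)\HH^{-1}(\vt^\ast))/n$ in the shape $N(\Theta)\,C(n)/n$, i.e. with $C(n)\equiv \tr(\II(\vt^\ast)\HH^{-1}(\vt^\ast))/N(\Theta)$. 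The only thing requiring a word here is that $\HH(\vt^\ast)$ is non-singular, which is built into \Cref{helpprop}(e) since $\HH(\vt^\ast)=\E[\nabla^2_\vt l_{\vt^\ast,1}]$ is the Fisher information and non-singular by Assumption $\ref{as_D11}$, and that $\II(\vt^\ast)$ is positive definite (\Cref{likelihoodlil}); hence $c_\Theta:=\tr(\II(\vt^\ast)\HH^{-1}(\vt^\ast))$ is a finite positive constant and $\limsup_{n\to\infty}C(n)=c_\Theta/N(\Theta)<\infty$, so \Cref{strongcons}(b) again applies.

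There is one bookkeeping subtlety to address: for the AIC the constant $c_\Theta$ depends on the space through $\vt^\ast$ and $N(\Theta)$, whereas \Cref{strongcons} is phrased for a single fixed penalty sequence. I would dispose of this by noting that the proof of \Cref{strongcons}(b),(c) uses only that $n$ times the penalty of each of the two compared spaces stays bounded, and re-running that argument for a pair $\Theta_0$ nested in $\Theta$ with map $F$ and a parameter $\vt_0\in\Theta_0$ generating $Y$ (such a pair exists—e.g. the one furnished by \Cref{BMlemma2}(b) in the Brownian case, cf. also the nested-space discussion in \Cref{sec:IC}—and the consistency definitions require the relevant inequality for all of them, so one such pair suffices). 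Exactly as in the proof of \Cref{strongcons}(c) one then obtains
\begin{equation*}
\lim_{n\to\infty}\P\bigl(AIC_n(\Theta_0)-AIC_n(\Theta)>0\bigr)=\P\Bigl(\tfrac12\textstyle\sum_{i=1}^{N(\Theta)-N(\Theta_0)}\lambda_i\chi^2_i>c_\Theta-c_{\Theta_0}\Bigr),
\end{equation*}
with $\lambda_i>0$ the positive eigenvalues of $\HH(\vt^\ast)^{\frac12}\MM_F(\vt^\ast)\II(\vt^\ast)\MM_F(\vt^\ast)\HH(\vt^\ast)^{\frac12}$ and $(\chi^2_i)$ independent one-degree-of-freedom $\chi^2$ variables (the case $c_\Theta=2N(\Theta)$, $c_{\Theta_0}=2N(\Theta_0)$ re-derives the CAIC). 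Since $\sum_i\lambda_i\chi^2_i$ is unbounded above and has a continuous law, this limiting probability is strictly positive for any finite threshold $c_\Theta-c_{\Theta_0}$; hence $\lim_{n\to\infty}\P(AIC_n(\Theta_0)-AIC_n(\Theta)<0)<1$, so the AIC is not weakly consistent, and since strong consistency implies weak consistency it is not strongly consistent either.

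The main "obstacle" is really just this compatibility check—verifying that the space-dependence of the AIC penalty does not break the application of \Cref{strongcons}—together with the elementary observation that the limiting overfitting statistic $\sum_i\lambda_i\chi^2_i$ is unbounded. All the genuine analytic work (the law of the iterated logarithm of \Cref{LLasymp}, the central limit theorem of \Cref{idprop}, and the distributional identification of the overfitting statistic carried out in the proof of \Cref{strongcons}) is already available and is used here as a black box.
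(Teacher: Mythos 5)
Your proposal is correct and follows essentially the same route as the paper: the CAIC is handled as the special case $C(n)\equiv 2$ of \Cref{strongcons}(b), and for the AIC the paper likewise states that the proof of \Cref{strongcons}(b) "can be directly adapted," which is precisely the adaptation you carry out (treating the space-dependent constant penalty and re-running the overfitting computation of \Cref{strongcons}(c)). Your write-up merely makes explicit the details the paper leaves to the reader.
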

\begin{proof}
The CAIC is a special case of $IC_n$ with $C(n) = 2$ such that the assertion follows from \Cref{strongcons}(b). For the AIC, the proof of \Cref{strongcons}(b) can be directly adapted.
\end{proof}

\subsection{The Bayesian Information Criterion (BIC)}
Another information criterion which appears often in the literature is the so-called \emph{Bayesian Information Criterion} (BIC), sometimes also called SIC, an abbreviation for Schwarz Information Criterion, named after the author who originally introduced it in \cite{Schwarz:1978}. Another often-cited article in this context is \cite{rissanen}, which introduces an equivalent criterion in a slightly different context based on coding theory.
As the name Bayesian Information Criterion already suggests, the approach of the definition is based on  Bayesian statistics.
Our derivation is based on \cite{cavanaughneath}, relying on properties of the likelihood function. Suppose that $\pi$ is a discrete prior probability distribution over the set of candidate spaces $\Theta$ and $\pi(\Theta)>0$ for every parameter space $\Theta$ which will be considered. Moreover, suppose that $g( \cdot \mid \Theta)$ is a prior probability distribution over the parameter space $\Theta$. For $g$ we require the following assumption.
\begin{assumptionletter}
         \label{as_C}
         \renewcommand{\theenumi}{C.\arabic{enumi}}
         \renewcommand{\labelenumi}{\theenumi}
         For every space $\Theta$ there exist two constants $b$ and $B$ with $0 < b \leq B < \infty$ such that $0 \leq g(\vt \mid \Theta) \leq B$ for all $\vt \in \Theta$ and $b \leq g(\vt \mid \Theta)$ for all $\vt$ in some neighborhood of the pseudo-true parameter $\vt^*\in\Theta$.
\end{assumptionletter}
Now we can apply Bayes' theorem to obtain the joint posterior probability distribution $f$ of $\Theta$ and $\vt$ which is
\begin{equation}\label{posterior1}
f( \Theta, \vt \mid  Y^n ) = \frac{ \pi(\Theta) g (\vt \mid \Theta) f(  Y^n \mid \Theta, \vt)}{h( Y^n)},
\end{equation}
where $h( \cdot )$ denotes the (unknown) marginal density of $ Y^n$. With this, we can calculate the a posteriori probability of space $\Theta$ as
\begin{equation}\label{probm}
\P (\Theta \mid  Y^n) = \int_{\Theta} f( \Theta, \vt \mid  Y^n) \dif \vt.
\end{equation}
The idea is to choose the most probable model for the data at hand, i. e. the space $\Theta$ which maximizes the a posteriori probability. Similar to the derivation of the AIC, the task is now to find a good approximation of $\eqref{probm}$ which is directly calculable from the data.
For this note first that maximization of \eqref{probm} is equivalent to minimizing $-2/n$ times the logarithm of $\P (\Theta \mid  Y^n)$. Applying this transformation and plugging in \eqref{posterior1} gives
\begin{align}
-\frac{2}{n} \log \left( \P ( \Theta \mid  Y^n ) \right) = \frac{2}{n} \log ( h (  Y^n)) - \frac{2}{n} \log ( \pi (\Theta))
-\frac{2}{n} \log \left( \int_{\Theta} f(  Y^n \mid \Theta, \vt) g(\vt \mid \Theta) \dif \vt \right). \label{logprob}
\end{align}
We choose the parameter space $\Theta$ with the lowest value of $-\frac{2}{n} \log \left( \P ( \Theta \mid  Y^n ) \right)$.
Hence, we have to approximate this expression. For this, we approximate the unknown density $f( Y^n \mid \Theta, \vt)$ by the pseudo-Gaussian likelihood function $\cancel{L}(\vt, Y^n) = \exp( -\frac{n}{2} \widehat{\LL}(\vt, Y^n) )$ and use the following theorem.


\begin{theorem}
Assume that the space $\Theta$ with associated family of continuous-time state space models $(A_{\vt}, B_{\vt}, C_{\vt}, L_{\vt})_{\vt \in \Theta}$ satisfies \autoref{as_D}
and the a priori density $g$ satisfies \autoref{as_C}.
Then
\begin{align*}
\wh\LL( \widehat{\vt}^n, Y^n) + N(\Theta) \frac{\log (n)}{n} + \frac{R_1 (N(\Theta))}{n} &\leq -\frac{2}{n} \log \left( \int_\Theta \cancel{L}(\vt, Y^n) g(\vt \mid \Theta) \dif \vt \right) \notag \\
                                                                       &\leq \wh\LL( \widehat{\vt}^n,  Y^n) + N(\Theta) \frac{\log (n)}{n} +
                                                                       \frac{R_2(N(\Theta))}{n}, \label{BICineq}
\end{align*}
where $R_1(N(\Theta))$ and $R_2(N(\Theta))$ are rest terms which do not depend on $n$. In particular,
\begin{eqnarray*}
    -\frac{2}{n} \log \left( \P ( \Theta \mid  Y^n ) \right) =\wh\LL( \widehat{\vt}^n,  Y^n) + N(\Theta) \frac{\log (n)}{n}+
    \left[\frac{2}{n} \log ( h (  Y^n))+O\left(\frac{\log(n)}{n}\right)\right].
\end{eqnarray*}
\end{theorem}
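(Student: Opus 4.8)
The plan is to run a Laplace-type (Schwarz-style) approximation of $\int_\Theta \cancel{L}(\vt, Y^n) g(\vt\mid\Theta)\dif\vt$, localised around the QMLE $\wh\vt^n$. By \Cref{idprop} we have $\wh\vt^n\to\vt^*$ $\Pas$, and since $\vt^*$ is interior to $\Theta$ by \ref{as_D8}, for $n$ large $\wh\vt^n$ is interior as well, hence $\nabla_\vt\wh\LL(\wh\vt^n, Y^n)=0$. As $\vt\mapsto\wh\LL(\vt, Y^n)$ is $C^2$ by \ref{as_D4}, an exact second-order Taylor expansion around $\wh\vt^n$ furnishes, for each $\vt\in\Theta$, an intermediate point $\overline\vt$ on the segment $[\vt,\wh\vt^n]$ with
\[
\wh\LL(\vt, Y^n)=\wh\LL(\wh\vt^n, Y^n)+\tfrac12(\vt-\wh\vt^n)^T\nabla^2_\vt\wh\LL(\overline\vt, Y^n)(\vt-\wh\vt^n),
\]
so that $\cancel{L}(\vt, Y^n)=\e^{-\frac n2\wh\LL(\wh\vt^n, Y^n)}\e^{-\frac n4(\vt-\wh\vt^n)^T\nabla^2_\vt\wh\LL(\overline\vt, Y^n)(\vt-\wh\vt^n)}$. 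Pulling the first (constant-in-$\vt$) factor out of the integral, it remains to sandwich $\int_\Theta\e^{-\frac n4(\vt-\wh\vt^n)^T\nabla^2_\vt\wh\LL(\overline\vt, Y^n)(\vt-\wh\vt^n)}g(\vt\mid\Theta)\dif\vt$ between two constant multiples of $n^{-N(\Theta)/2}$.

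For this I would use two ingredients. First, $\HH(\vt^*)$ is the Hessian of $\mathscr{Q}$ at its interior minimum $\vt^*$, hence positive semidefinite, and is non-singular by \ref{as_D11}, so $\HH(\vt^*)\succ 0$; by continuity of $\vt\mapsto\HH(\vt)$ (from \ref{as_D4}) there are deterministic $0<\lambda\le\Lambda$ and $\delta>0$ with $\lambda I\preceq\HH(\vt)\preceq\Lambda I$ on $B_{2\delta}(\vt^*)$, and combining this with the uniform convergence $\sup_\vt\|\nabla^2_\vt\wh\LL(\vt, Y^n)-\HH(\vt)\|\to0$ $\Pas$ of \Cref{helpprop}(d) and $\wh\vt^n\in B_\delta(\vt^*)$ eventually, one gets $\tfrac\lambda2 I\preceq\nabla^2_\vt\wh\LL(\overline\vt, Y^n)\preceq 2\Lambda I$ for every $\vt\in B_\delta(\wh\vt^n)$ and all $n\ge n_1(\omega)$ (the intermediate point $\overline\vt$ again lying in $B_{2\delta}(\vt^*)$). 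Second, by the uniform SLLN \Cref{helpprop}(b) and the identifiable-uniqueness assumption \ref{as_D10}, there is a deterministic $\delta_0>0$ with $\wh\LL(\vt, Y^n)-\wh\LL(\wh\vt^n, Y^n)\ge\delta_0$ for all $\vt\in\Theta\setminus B_\delta(\wh\vt^n)$ and $n$ large. The upper bound then follows by splitting the integral at $\partial B_\delta(\wh\vt^n)$: on $B_\delta(\wh\vt^n)$ bound $g\le B$ (by \autoref{as_C}), extend the Gaussian integral to all of $\R^{N(\Theta)}$ and evaluate it, giving a term of order $n^{-N(\Theta)/2}$; off $B_\delta(\wh\vt^n)$ the contribution is at most $|\Theta|\,B\,\e^{-n\delta_0/2}=o(n^{-N(\Theta)/2})$. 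For the lower bound, discard the part outside $B_\delta(\wh\vt^n)$, bound $g\ge b$ there (legitimate for $n$ large since $\wh\vt^n\to\vt^*$ and \autoref{as_C} gives $g\ge b$ near $\vt^*$), substitute $u=\sqrt n(\vt-\wh\vt^n)$ and restrict to $\|u\|\le1$ to obtain a term of order $n^{-N(\Theta)/2}$ with strictly positive deterministic constant. Altogether there are deterministic $0<c_0\le C_0<\infty$ with $c_0n^{-N(\Theta)/2}\e^{-\frac n2\wh\LL(\wh\vt^n, Y^n)}\le\int_\Theta\cancel{L}(\vt, Y^n)g(\vt\mid\Theta)\dif\vt\le C_0n^{-N(\Theta)/2}\e^{-\frac n2\wh\LL(\wh\vt^n, Y^n)}$; applying the decreasing map $-\tfrac2n\log(\cdot)$ and isolating the $\tfrac{\log n}{n}$ term yields the claimed double inequality with $R_1(N(\Theta))=-2\log C_0$ and $R_2(N(\Theta))=-2\log c_0$, which do not depend on $n$.

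For the concluding display, substitute $\cancel{L}(\vt, Y^n)$ for $f(Y^n\mid\Theta,\vt)$ in \eqref{logprob} and apply the bound just obtained to the integral term: since $\pi(\Theta)>0$ is fixed, $-\tfrac2n\log\pi(\Theta)=O(1/n)$, and $R_i(N(\Theta))/n=O(1/n)\subseteq O(\log(n)/n)$, so $-\tfrac2n\log\P(\Theta\mid Y^n)=\wh\LL(\wh\vt^n, Y^n)+N(\Theta)\tfrac{\log n}{n}+[\tfrac2n\log h(Y^n)+O(\log(n)/n)]$ (the term $\tfrac2n\log h(Y^n)$ is retained because $h$ is an unknown marginal density).

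I expect the main obstacle to be the uniform control of the random Hessian $\nabla^2_\vt\wh\LL(\overline\vt, Y^n)$ over the localisation region — where $\overline\vt$ is an implicitly defined, $\vt$-dependent intermediate point — together with the companion tail estimate on $\Theta\setminus B_\delta(\wh\vt^n)$; both are handled by feeding the uniform-in-$\vt$ almost sure convergence statements \Cref{helpprop}(b),(d) and the identifiable uniqueness \ref{as_D10} into the split, the extra care being that the eigenvalue bounds $\lambda,\Lambda$ and the tail gap $\delta_0$ be chosen deterministically, so that $R_1,R_2$ depend neither on $n$ nor on $\omega$.
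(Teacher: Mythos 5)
Your proof is correct in substance, but it follows a genuinely different route from the paper. The paper's own proof is a one-line delegation: it verifies that Assumptions B and C together with \Cref{helpprop} and Sin--White's Proposition 3.1 supply the regularity conditions required by Cavanaugh and Neath, and then quotes their general derivation of the BIC bounds. You instead carry out the underlying Laplace/Schwarz approximation from scratch: exact second-order Taylor expansion of $\wh\LL$ at the (eventually interior, hence critical) point $\wh\vt^n$, a two-sided deterministic eigenvalue bound on the random Hessian over the localisation ball obtained from positive definiteness of $\HH(\vt^*)$ (Assumptions \ref{as_D8} and \ref{as_D11}) plus the uniform a.s.\ convergence in \Cref{helpprop}(d), an exponentially small tail estimate off the ball from identifiable uniqueness \ref{as_D10} and the uniform SLLN \Cref{helpprop}(b), and the prior bounds of \autoref{as_C}; this sandwiches the integral between deterministic multiples of $n^{-N(\Theta)/2}\e^{-\frac n2\wh\LL(\wh\vt^n,Y^n)}$ and yields $R_1=-2\log C_0$, $R_2=-2\log c_0$. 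What your version buys is a self-contained argument that makes explicit where the $N(\Theta)\log(n)/n$ term and the constants come from and exactly which assumption enters where; what the paper's version buys is brevity and reuse of an established general result. Two presentational points you should make explicit if you write this up: the inequalities hold $\p$-a.s.\ for $n\geq n_1(\omega)$ (which is the intended asymptotic reading of the statement, since the constants themselves are deterministic), and the Taylor expansion along the segment $[\vt,\wh\vt^n]$ should formally be invoked only for $\vt$ in the convex ball $B_\delta(\wh\vt^n)$ — which is all you actually use, since off the ball you work directly with the gap $\wh\LL(\vt,Y^n)-\wh\LL(\wh\vt^n,Y^n)\geq\delta_0$ rather than with the quadratic form.
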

\begin{proof}
By \autoref{as_D}, \autoref{as_C}, \Cref{helpprop} and \cite[Proposition 3.1]{sinwhite}
the regularity assumptions in  \cite{cavanaughneath} are satisfied so that the statement follows from there.
\end{proof}
The term $\frac{2}{n} \log ( h (  Y^n))$  is the same across all parameter spaces and therefore not relevant for model selection.
Based on these ideas, we define the BIC.

\begin{definition}\label{bicdef}
Assume that the space $\Theta$ with associated family of continuous-time state space models $(A_{\vt}, B_{\vt}, C_{\vt}, L_{\vt})_{\vt \in \Theta}$ satisfies \autoref{as_D}.
Then the \textsl{Bayesian Information Criterion (BIC)} is defined as
\begin{equation*}\label{BICdef}
\text{BIC}_n(\Theta) := \wh \LL( \widehat{\vt}^n,  Y^n) + N(\Theta) \frac{\log (n)}{n}.
\end{equation*}
\end{definition}
As with the AIC, we can immediately make a statement about consistency of the BIC:
\begin{theorem}
The BIC is a strongly consistent information criterion.
\end{theorem}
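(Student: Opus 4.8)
The plan is to recognize the BIC as the special instance of the information criterion $IC_n$ from \Cref{ICLdef} obtained by choosing the penalty term $C(n) = \log(n)$, and then to invoke \Cref{strongcons}(a). First I would verify that $C(n) = \log(n)$ meets the conditions required in \Cref{ICLdef}: it is positive for $n \geq 2$, it is nondecreasing in $n$, and $C(n)/n = \log(n)/n \to 0$ as $n \to \infty$. Hence the general theory developed in \Cref{sec:IC} applies to the BIC.

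The key observation is then the elementary limit $\lim_{n \to \infty} \log(n)/\log(\log(n)) = \infty$, which gives $\limsup_{n \to \infty} C(n)/\log(\log(n)) = \infty$. On the other hand, $\lambda_{\text{max}}(\HH(\vt^E_0)^{-\frac12} \II(\vt^E_0) \HH(\vt^E_0)^{-\frac12})$ is a fixed finite number: in the space $\Theta^E_0$ the parameter $\vt^E_0$ is a true parameter, so $\HH(\vt^E_0)$ is invertible by Assumption~\ref{as_D11}, and $\II(\vt^E_0)$ is a finite, positive definite matrix by the arguments used in the proof of \Cref{likelihoodlil}. Consequently the sufficient condition $\limsup_{n \to \infty} C(n)/\log(\log(n)) > \lambda_{\text{max}}(\HH(\vt^E_0)^{-\frac12} \II(\vt^E_0) \HH(\vt^E_0)^{-\frac12})$ of \Cref{strongcons}(a) is satisfied, and strong consistency follows. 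Equivalently, one may simply cite \Cref{consistremark}(a), which records exactly this conclusion under the hypothesis $\limsup_{n \to \infty} C(n)/\log(\log(n)) = \infty$.

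I do not expect any genuine obstacle here: the statement is an immediate corollary of \Cref{strongcons}(a), and the only points worth making explicit in the write-up are the trivial growth comparison $\log(n)/\log(\log(n)) \to \infty$ and the finiteness of the largest eigenvalue appearing on the right-hand side of the consistency criterion.
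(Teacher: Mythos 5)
Your proposal is correct and follows exactly the paper's own argument: identify the BIC as $IC_n$ with $C(n)=\log(n)$, note that $\log(n)/\log(\log(n))\to\infty$, and invoke \Cref{strongcons}(a) (equivalently \Cref{consistremark}(a)). The extra remarks on the finiteness of $\lambda_{\text{max}}(\HH(\vt^E_0)^{-\frac12}\II(\vt^E_0)\HH(\vt^E_0)^{-\frac12})$ are a harmless elaboration of what the paper leaves implicit.
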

\begin{proof}
The BIC is a special case of $IC_n$ with $C(n) = \log(n)$. The assertion immediately follows from \Cref{strongcons}(a), since $\lim_{n \to \infty} {\log(n)/\log(\log(n))} = \infty$ (see also \Cref{consistremark}(a)).
\end{proof}

\section{Simulation study}\label{sec:simulation}
The results on information criteria obtained in the previous sections will now be illustrated by  a simulation study.
In this context we would like to thank Eckhard Schlemm and Robert Stelzer who kindly provided  the MATLAB code for the simulation and parameter estimation
of the MCARMA process.
 As before, we use the
Echelon MCARMA parametrization in the simulations. We simulate a two-dimensional MCARMA process with Kronecker index $m_0 = (1, 2)$ for two parameter values.
One is an  MCARMA(2,0) process with parameter
$$\vt^{(1)}_0 = \begin{pmatrix} -1 & -2 & 1 & -2 & -3 & 0 & 0 \end{pmatrix}.$$
The other is an MCARMA(2,1) process with parameter
$$\vt^{(2)}_0 = \begin{pmatrix} -1 & -2 & 1 & -2 & -3 & 1 & 2 \end{pmatrix}.$$
As  driving L\'{e}vy process, we use, on the one hand,  a two-dimensional, correlated Brownian motion
and, on the other hand, a two-dimensional, normal-inverse Gaussian (NIG) process.
For the NIG process the increments $L(t) - L(t-1)$ have the density
$$f_{NIG}(x; \mu, \alpha, \beta, \delta, \Delta) = \frac{\delta \e^{\delta \kappa}}{2 \pi} \frac{\e^{ \langle \beta x \rangle}}{\e^{\alpha g(x)}} \frac{1 + \alpha g(x)}{g(x)^3}, \quad x \in \R^2,$$
where
$$g(x) = \sqrt{\delta^2 + \langle x- \mu, \Delta ( x - \mu )\rangle}, \;\quad \kappa^2 = \alpha^2 - \langle \beta, \Delta \beta \rangle.$$
The parameter $\mu \in \R^2$ is a location parameter, $\alpha \geq 0$ is a shape parameter, $\beta \in \R^2$ is a symmetry parameter, $\delta \geq 0$ is a scale parameter and $\Delta \in \R^{2 \times 2}$ is a positive semidefinite matrix with $\det(\Delta) = 1$ that determines the dependence between the components of the L\'{e}vy process. In the simulations we use the values
$$\delta = 1, \;\quad \alpha = 3, \;\quad \beta= \begin{pmatrix} 1 \\ 1 \end{pmatrix},\quad \Delta = \begin{pmatrix} \frac54 & -\frac12 \\ -\frac12 & 1 \end{pmatrix},\quad \mu = -\frac{1}{2\sqrt{31}} \begin{pmatrix} 3 \\ 2 \end{pmatrix},$$
which result in a zero-mean process with covariance matrix
$$\Sigma^{L}_{NIG} \approx \begin{pmatrix} 0.4571 & -0.1622 \\ -0.1622 & 0.3708 \end{pmatrix}.$$
In the case of the Brownian motion the covariance matrix $\Sigma^L_{BM}$ is equal to the covariance matrix $\Sigma^{L}_{NIG}$ in the NIG case.
In the estimation the number of free parameters includes three parameters for the covariance matrix of the driving L\'{e}vy process.\\
  The simulation of the continuous-time process is done with the initial value $X(0) = 0$, applying the Euler-Maruyama method to the stochastic differential equation \eqref{cstateeq} and then evoking \eqref{CARMA:observation}. For the Euler-Maruyama scheme we operate on the interval $[0, 2000]$ and take the step size  $0.01$. Afterwards, the simulated process is sampled at discrete points in time with sampling distance $h=1$, resulting in $n=2000$ observations.
After obtaining the discrete samples of the MCARMA process we calculate the  AIC, CAIC and BIC as defined in \Cref{aicdef}, \Cref{caicdef} and \Cref{bicdef}, respectively.
In the calculation of the AIC we estimate the penalty term $\tr \left( \II(\vt^*) \HH^{-1}(\vt^*)\right)$ by the methods
presented in \cite[Section 2.2]{schlemmstelzer} as well
since in general there is no explicit form of $\II(\vt^*)$ and $\HH(\vt^*)$.
We consider eight different parameter spaces in total. While some of them differ in the Kronecker index, others differ only by the degree of the MA polynomial of the MCARMA process.
We compare the different values of the information criteria  and write down the space for which the minimum values is attained.
The results of $50$ replications are summarized in \Cref{Table1}.

\begin{table}[h]
\centering
\begin{tabular}{c||c|c|c|c||c|c|c||c|c|c|c}
 \hline
 Space & \multicolumn{4}{c||}{Model} & \multicolumn{3}{c||}{BM} & \multicolumn{3}{c|}{NIG} \\
 \hline
    & $m$ & $p$ & $q$ & $N(\Theta)$ & AIC & CAIC & BIC & AIC & CAIC  & BIC \\
 \hline
  1& $(1,1)$ & $1$ & $0$ & 7 & 0 & 0  & 0 & 0 & 0  & 0 \\
 \hline
  2 & $(1,2)$ & $2$ & $1$ & 10 & 14 & 8  & 1 & 10 & 4  & 0 \\
  \hline
  3 & $(1,2)$ & $2$ & $0$ & 8 & 36 & 42  & 49 & 40 & 46  & 50 \\
    \hline
  4 & $(2,1)$ & $2$ & $1$ & 11 & 0 & 0  & 0 & 0 & 0  & 0 \\
    \hline
  5 & $(2,1)$ & $2$ & $0$ &  9 & 0 & 0  & 0 & 0 & 0  & 0 \\
    \hline
  6 & $(2,2)$ & $2$ & $1$ & 15 & 0 & 0  & 0 & 0 & 0  & 0 \\
    \hline
  7 & $(2,2)$ & $2$ & $0$ & 11 & 0 & 0  & 0 & 0 & 0  & 0 \\
    \hline
  8 & $(3,2)$ & $3$ & $2$ & 19 & 0 & 0  & 0 & 0 & 0  & 0 \\
  \hline
\end{tabular}
\caption{Results for the true parameter $\vt^{(1)}_0$ and $\Sigma^L_{BM} = \Sigma^{L}_{NIG}$.} \label{Table1}
\end{table}

 As expected because of the strong consistency the BIC performs convincingly and has a high accuracy in both cases. It even achieves a perfect score in the case where the driving noise is a NIG process and makes one wrong decision in the BM scenario. Furthermore, both versions of the AIC exhibit  overfitting. There is an undeniable difference between the CAIC and the AIC in both cases. From the theory, we know that this should not happen when the driving L\'{e}vy process is a Brownian motion since the criteria are
 then the same. This difference comes from the estimation error by estimating
 the penalty term $\tr \left( \II(\vt^*) \HH^{-1}(\vt^*)\right)$ in the AIC. We realize that in the Gaussian model the estimation error of the penalty term is usually higher for model number 3 than for model 2 (relative to the true values), which results in a higher overfitting rate for the AIC.
We also calculate the overfitting probability in the Brownian motion case as given in \Cref{strongcons}(c). For this, note that there is only one parameter space in which the true one is nested (space number $2$) and for that space we have $C=2$, $N(\Theta) - N(\Theta_0) = 2$ and the entries of $F$ are given by
$$
F_{ij}= \begin{cases} 1, &\text{if  } i=j \text{  and  } i \in \{1, 2, 3, 4, 5, 8, 9, 10 \}, \\
                      0, &\text{otherwise.}
\end{cases}$$
The strictly positive eigenvalues of $\HH(\vt^*)^{\frac12} \MM_F(\vt^*) \II(\vt^*) \MM_F(\vt^*) \HH(\vt^*)^{\frac12}$ are calculated with the help of MATLAB and turn out to be both equal to $2$, so that the overfitting probability simplifies to
$$\P( \chi^2_1 > 2 ) \approx 0.1573.$$
The empirical probability $8/50 = 0.16$ of overfitting in the CAIC  is very close.
The results of the simulation study for $\vt^{(2)}_0$ are given in \Cref{Table:2}.\\

\begin{table}[h]
\centering
\begin{tabular}{c||c|c|c|c||c|c|c||c|c|c}
 \hline
 Space & \multicolumn{4}{c||}{Model} & \multicolumn{3}{c||}{BM} & \multicolumn{3}{c}{NIG} \\
 \hline
    & $m$ & $p$ & $q$ & $N(\Theta)$ & AIC & CAIC  & BIC & AIC & CAIC & BIC \\
 \hline
  1& $(1,1)$ & $1$ & $0$ & 7 & 0 & 0 & 0  & 0 & 0  & 0 \\
 \hline
  2 & $(1,2)$ & $2$ & $1$ & 10 & 50 & 50  & 50 & 50 &  50  & 50 \\
  \hline
  3 & $(1,2)$ & $2$ & $0$ & 8 &0 & 0  & 0 & 0 & 0  & 0 \\
    \hline
  4 & $(2,1)$ & $2$ & $1$ & 11& 0 & 0  & 0 & 0 & 0  & 0 \\
    \hline
  5 & $(2,1)$ & $2$ & $0$ & 9& 0 & 0  & 0 & 0 & 0  & 0 \\
    \hline
  6 & $(2,2)$ & $2$ & $1$ & 15 &0 & 0  & 0 & 0 & 0  & 0 \\
    \hline
  7 & $(2,2)$ & $2$ & $0$ & 11& 0 & 0  & 0 & 0 & 0  & 0 \\
    \hline
  8 & $(3,2)$ & $3$ & $2$ & 19& 0 & 0 & 0 & 0 & 0 & 0   \\
  \hline
\end{tabular}
\caption{Results for the true parameter $\vt^{(2)}_0$ and $\Sigma^L_{BM} = \Sigma^{L}_{NIG}$.} \label{Table:2}
\end{table}

As we can see all the information criteria  perform perfectly. There are no effects of overfitting, which is not surprising considering the fact that the true parameter is chosen in such a way that it is not contained in any of the other spaces besides space number 2, so that the scenario from \Cref{consistremark}(c) is given.
\appendix
\section{Appendix}

\subsection{Auxiliary results for \Cref{Section:3}} \label{Section:3.3}

We summarize  some auxiliary results which are used throughout the paper. We start with a lemma giving moving average representations of the pseudo-innovations and their derivatives.
\begin{lemma}\label{serieslemma}
Assume that the space $\Theta$ with associated family of continuous-time state space models $(A_{\vt}, B_{\vt}, C_{\vt}, L_{\vt})_{\vt \in \Theta}$ satisfies Assumptions $\ref{as_D1}$ to $\ref{as_D5}$.
\begin{itemize}
\item[(a)] There exists a matrix sequence $(c_{\vt, k})_{k \in \N}$ such that
\begin{equation*}\label{seriesrep}
\epsilon_{\vt,k} = Y(kh) + \sum_{\nu = 1}^{\infty} c_{\vt, \nu} Y((k-\nu)h), \quad k \in \Z.
\end{equation*}
Furthermore, there exists a positive constant $C$ and a constant $\rho \in (0, 1)$ such that
\begin{eqnarray*}
    \sup_{\vt \in \Theta} \| c_{\vt, k} \| \leq C \rho^k, \quad k \in \N.
\end{eqnarray*}
\item[(b)] For each $i \in \{ 1, \ldots, N(\Theta) \}$, there exists a matrix sequence $(c^{(i)}_{\vt, k})_{k \in \N}$ such that
\begin{equation*}\label{seriesrep2}
\partial_i \epsilon_{\vt, k} = \sum_{\nu = 1}^{\infty} c^{(i)}_{\vt, \nu} Y((k-\nu)h), \quad k \in \Z.
\end{equation*}
Furthermore, there exists a positive constant ${C}$ and a constant ${\rho} \in (0, 1)$ such that
\begin{eqnarray*}
    \sup_{\vt \in \Theta} \| c^{(i)}_{\vt, k} \| \leq {C} {\rho}^k, \quad k \in \N.
\end{eqnarray*}
\item[(c)] For each $i, j \in \{ 1, \ldots, N(\Theta) \}$, there exists a matrix sequence $(c^{(i,j)}_{\vt, k})_{k \in \N}$ such that
\begin{equation*}\label{seriesrep3}
\partial^2_{i,j} \epsilon_{\vt, k} = \sum_{\nu=1}^{\infty} c^{(i,j)}_{\vt, \nu} Y((k-\nu)h), \quad k \in \Z.
\end{equation*}
Furthermore, there exists a positive constant ${C}$ and a constant ${\rho} \in (0, 1)$ such that
\begin{eqnarray*}
    \sup_{\vt \in \Theta} \| c^{(i,j)}_{\vt, k} \| \leq {C} {\rho}^k, \quad  k \in \N.
\end{eqnarray*}
\end{itemize}
\end{lemma}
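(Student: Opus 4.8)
The plan is to make everything explicit: substitute the moving-average expression for $\widehat X_{\vt,k}$ into the defining relation $\epsilon_{\vt,k}=Y(kh)-C_\vt\widehat X_{\vt,k}$, which yields
\begin{equation*}
\epsilon_{\vt,k}=Y(kh)-\sum_{\nu=1}^{\infty}C_\vt\,(\e^{A_\vt h}-K_\vt C_\vt)^{\nu-1}K_\vt\,Y((k-\nu)h),\qquad k\in\Z .
\end{equation*}
This already gives the representation of part (a) with $c_{\vt,\nu}:=-C_\vt(\e^{A_\vt h}-K_\vt C_\vt)^{\nu-1}K_\vt$ for $\nu\ge1$, and reduces the geometric bound to a uniform estimate on the powers of the one-step prediction-error matrix $\Pi_\vt:=\e^{A_\vt h}-K_\vt C_\vt$. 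Here I would invoke two facts from \cite{schlemmstelzer}: the steady-state Kalman gain $K_\vt$ (equivalently, the solution $\Omega_\vt$ of the algebraic Riccati equation) depends continuously on $\vt$ and the spectral radius of $\Pi_\vt$ is strictly less than one for every $\vt$ (cf.\ \cite[Proposition~2.1]{schlemmstelzer}). Since the spectral radius is upper semicontinuous and $\Theta$ is compact, this produces constants $C>0$ and $\rho\in(0,1)$ with $\sup_{\vt\in\Theta}\|\Pi_\vt^{\,n}\|\le C\rho^{n}$ for all $n$ — this is exactly \cite[Lemma~3.14]{schlemmstelzer}. Together with the finiteness of $\sup_{\vt\in\Theta}\|C_\vt\|$ and $\sup_{\vt\in\Theta}\|K_\vt\|$ (continuity on a compact set) and a harmless enlargement of $\rho$ to absorb the constant, part (a) follows.

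For parts (b) and (c) the plan is to differentiate the coefficients $c_{\vt,\nu}$ term by term. By Assumption \ref{as_D4} the maps $\vt\mapsto A_\vt$ and $\vt\mapsto C_\vt$ are three times continuously differentiable and, by the regularity of the Riccati solution in its coefficients, so is $\vt\mapsto K_\vt$; hence $\vt\mapsto\Pi_\vt$ is at least twice continuously differentiable with $\sup_\vt\|\partial_i\Pi_\vt\|$ and $\sup_\vt\|\partial^2_{i,j}\Pi_\vt\|$ finite. Applying the Leibniz rule to $\Pi_\vt^{\,\nu-1}$, the derivative $\partial_i\Pi_\vt^{\,\nu-1}$ is a sum of at most $\nu-1$ terms of the form $\Pi_\vt^{a}(\partial_i\Pi_\vt)\Pi_\vt^{b}$ with $a+b=\nu-2$, each of which is bounded by $C^{2}\sup_\vt\|\partial_i\Pi_\vt\|\,\rho^{\nu-2}$ via the estimate from (a); thus $\sup_\vt\|\partial_i\Pi_\vt^{\,\nu-1}\|\le C'\nu\rho^{\nu-2}$, and similarly $\sup_\vt\|\partial^2_{i,j}\Pi_\vt^{\,\nu-1}\|\le C''\nu^{2}\rho^{\nu-3}$. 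Absorbing the polynomial prefactors into a slightly larger base $\tilde\rho\in(\rho,1)$ yields the required uniform geometric bounds for $c^{(i)}_{\vt,\nu}:=\partial_i c_{\vt,\nu}$ and $c^{(i,j)}_{\vt,\nu}:=\partial^2_{i,j}c_{\vt,\nu}$. To legitimize the term-by-term differentiation I would note that $\E\|Y(kh)\|^{2}<\infty$ forces $\|Y((k-\nu)h)\|$ to grow at most polynomially in $\nu$ almost surely (Borel–Cantelli), so the series $\sum_\nu c_{\vt,\nu}Y((k-\nu)h)$ and its formal derivative series converge uniformly in $\vt$ both almost surely and in $L^{2}$; the standard theorem on differentiating uniformly convergent series then identifies the limits with $\partial_i\epsilon_{\vt,k}$ and $\partial^2_{i,j}\epsilon_{\vt,k}$.

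The genuinely delicate points are the two regularity inputs borrowed above: the smooth and bounded dependence of the steady-state Kalman gain $K_\vt$ on $\vt$ — obtained by the implicit function theorem applied to the algebraic Riccati equation, whose relevant Jacobian is nonsingular thanks to minimality (Assumption \ref{as_D5}) — and the uniform geometric decay $\sup_\vt\|\Pi_\vt^{\,n}\|\le C\rho^{n}$. Since both are already established in \cite{schlemmstelzer}, the remaining work is the elementary bookkeeping with the Leibniz rule and the interchange-of-limits argument sketched above.
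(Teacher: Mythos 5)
Your argument is correct and is essentially the paper's own proof unpacked: the paper simply cites \cite[Lemmas 2.6ii), 2.11ii), 2.11iv) and 3.14]{schlemmstelzer}, and what you have written out — the explicit coefficients $c_{\vt,\nu}=-C_\vt(\e^{A_\vt h}-K_\vt C_\vt)^{\nu-1}K_\vt$, the uniform geometric bound on powers of $\e^{A_\vt h}-K_\vt C_\vt$ over the compact $\Theta$, and the Leibniz-rule differentiation of the coefficients — is precisely the content of those cited lemmas. No gap; the only caveat is that your justification of the uniform constant in $\sup_\vt\|\Pi_\vt^n\|\le C\rho^n$ is slightly glib (one needs a finite-subcover argument, not just upper semicontinuity of the spectral radius), but since you correctly attribute this to \cite[Lemma 3.14]{schlemmstelzer} this is immaterial.
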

\begin{proof}
Part (a) is \cite[Lemma 2.6ii)]{schlemmstelzer}, part (b) is \cite[Lemma 2.11ii)]{schlemmstelzer} and part (c) is \cite[Lemma 2.11iv)]{schlemmstelzer}
where we additionally use \cite[Lemma 3.14]{schlemmstelzer}.
\end{proof}

In the next step, we show that it does not matter whether we consider the approximate pseudo-innovations or  the pseudo-innovations.
\begin{lemma}\label{helemma}
Assume that the space $\Theta$ with associated family of continuous-time state space models $(A_{\vt}, B_{\vt}, C_{\vt}, L_{\vt})_{\vt \in \Theta}$ satisfies Assumptions $\ref{as_D1}$ to $\ref{as_D5}$.
If for $i, j \in \{ 1, \ldots, N(\Theta) \}$ the initial values $\widehat{X}_{\vt,1}$ are such that $\sup_{\vt \in \Theta} \| \widehat{X}_{\vt,1} \|$, $\sup_{\vt \in \Theta} \| \partial_i \widehat{X}_{\vt,1} \|$ and  $\sup_{\vt \in \Theta} \| \partial^2_{i,j} \widehat{X}_{\vt,1} \|$ are almost surely finite, then it holds:
\begin{itemize}
\item[(a)] $\sup_{\vt \in \Theta} \left| \widehat{\LL} \left( \vt, Y^{n} \right) - \LL \left( \vt, Y^{n} \right) \right|\to 0$  as $n\to\infty$ $\Pas$
\item[(b)] $ \sqrt{n}\sup_{\vt \in \Theta} \left| \partial_i \widehat{\LL} \left( \vt, Y^{n} \right) - \partial_i \LL \left( \vt, Y^{n} \right) \right|\stackrel{\p}{\to}0$ as $n\to\infty$.
\item[(c)]$ \sup_{\vt \in \Theta} \left| \partial^2_{i,j} \widehat{\LL} \left( \vt, Y^{n} \right) - \partial^2_{i,j} \LL \left( \vt, Y^{n} \right) \right|\to 0$ as $n\to\infty$ $\Pas$
\item[(d)]$\sup_{\vt \in \Theta} \E \left[ \left| \wh \LL ( \vt, Y^n ) - \LL ( \vt, Y^n ) \right| \right] \to 0$ as $n \to \infty$.
\end{itemize}
\end{lemma}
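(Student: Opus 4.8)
The plan rests on the observation that the approximate pseudo‑innovations $\he_{\vt,k}$ and the pseudo‑innovations $\epsilon_{\vt,k}$ differ only through the error committed at the single initialisation step, and that this error is damped geometrically by the stable Kalman transition matrix $\Phi_\vt:=\e^{A_\vt h}-K_\vt C_\vt$. Subtracting the recursion $\widehat{X}_{\vt,k}=\Phi_\vt\widehat{X}_{\vt,k-1}+K_\vt Y((k-1)h)$, run from the prescribed value $\widehat{X}_{\vt,1}$, from the same recursion satisfied by the infinite sum $\sum_{j\ge1}\Phi_\vt^{\,j-1}K_\vt Y((k-j)h)$ gives $\he_{\vt,k}-\epsilon_{\vt,k}=-C_\vt\Phi_\vt^{\,k-1}\Delta_{\vt,1}$, where $\Delta_{\vt,1}:=\widehat{X}_{\vt,1}-\sum_{j\ge1}\Phi_\vt^{\,j-1}K_\vt Y((1-j)h)$ does not depend on $n$. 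First I would show that $\sup_{\vt\in\Theta}\|\Phi_\vt^{\,k}\|\le C\rho^k$ for some $C>0$, $\rho\in(0,1)$: each $\Phi_\vt$ has spectral radius $<1$ (steady‑state Kalman filter), the map $\vt\mapsto\Phi_\vt$ is continuous by Assumptions $\ref{as_D4}$ and $\ref{as_D5}$ and continuity of the Riccati solution, and $\Theta$ is compact --- the same ingredients underlying \Cref{serieslemma}. Differentiating the filter recursion once and twice, $\partial_i\he_{\vt,k}-\partial_i\epsilon_{\vt,k}$ and $\partial^2_{i,j}\he_{\vt,k}-\partial^2_{i,j}\epsilon_{\vt,k}$ solve perturbed geometric recursions whose inhomogeneities are $O(\rho^k)$ uniformly in $\vt$; since $k^m\rho^k\le C_{\rho'}(\rho')^k$ for any $\rho<\rho'<1$, one obtains $\sup_\vt\|\partial_i\he_{\vt,k}-\partial_i\epsilon_{\vt,k}\|\le C(\rho')^k R_1$ and $\sup_\vt\|\partial^2_{i,j}\he_{\vt,k}-\partial^2_{i,j}\epsilon_{\vt,k}\|\le C(\rho')^k R_2$, where $R:=\sup_\vt\|\Delta_{\vt,1}\|$ and its differentiated analogues $R_1,R_2$ are a.s.\ finite: this uses the hypotheses on $\sup_\vt\|\widehat{X}_{\vt,1}\|$, $\sup_\vt\|\partial_i\widehat{X}_{\vt,1}\|$, $\sup_\vt\|\partial^2_{i,j}\widehat{X}_{\vt,1}\|$ together with a.s.\ finiteness of the $\vt$‑supremum of the ``pseudo'' series and its derivatives, which follows from the uniform geometric bounds on their coefficients (\Cref{serieslemma}) and from $\sum_{j\ge1}\rho^j\|Y(-jh)\|<\infty$ a.s.\ (Borel--Cantelli and $\ref{as_D2}$).

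Next I would reduce the four statements to sums of geometrically damped differences of quadratic forms. Since the $\log\det V_\vt$ contributions to $l_{\vt,k}$ and to its approximate counterpart are identical, $\wh\LL(\vt,Y^n)-\LL(\vt,Y^n)=\frac1n\sum_{k=1}^n\bigl(\he_{\vt,k}^TV_\vt^{-1}\he_{\vt,k}-\epsilon_{\vt,k}^TV_\vt^{-1}\epsilon_{\vt,k}\bigr)$, and analogously for the first and second partial derivatives via the explicit form of $\partial_i l_{\vt,k}$ (cf.\ \eqref{partdevl}) and its second‑order analogue. Using $a^TMa-b^TMb=(a-b)^TMa+b^TM(a-b)$, the step‑one bounds, the compactness estimates $\sup_\vt\|V_\vt^{-1}\|<\infty$ and $\sup_\vt\|\partial_i V_\vt\|<\infty$ (Assumption $\ref{as_D4}$), and the uniform‑in‑$k$ moment bounds $\E[\sup_\vt\|\epsilon_{\vt,k}\|^2]$, $\E[\sup_\vt\|\partial_i\epsilon_{\vt,k}\|^2]$, $\E[\sup_\vt\|\partial^2_{i,j}\epsilon_{\vt,k}\|^2]\le M<\infty$ (Minkowski applied to the moving‑average representations of \Cref{serieslemma}, plus stationarity of $Y$ and $\E\|Y(0)\|^2<\infty$), one arrives at bounds of the schematic form
\[
\sup_{\vt\in\Theta}\bigl|\wh\LL(\vt,Y^n)-\LL(\vt,Y^n)\bigr|\ \le\ \frac1n\sum_{k=1}^n(\rho')^k\Psi_k ,
\]
with $\Psi_k\ge0$ (a product of one of $R,R_1,R_2$ with $\vt$‑suprema of $\epsilon$, $\he$ and their derivatives) satisfying $\sup_k\E\Psi_k<\infty$, and likewise for $\partial_i(\wh\LL-\LL)$ and $\partial^2_{i,j}(\wh\LL-\LL)$.

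The conclusions then follow. Because $\E\sum_{k\ge1}(\rho')^k\Psi_k=\sum_{k\ge1}(\rho')^k\E\Psi_k<\infty$, the sum $S:=\sum_{k\ge1}(\rho')^k\Psi_k$ is a.s.\ finite, so $\sup_\vt|\wh\LL(\vt,Y^n)-\LL(\vt,Y^n)|\le S/n\to0$ a.s., which is (a); replacing $\Psi_k$ by its second‑derivative version gives (c). For (b), the same bound times $\sqrt n$ is at most $S_1/\sqrt n\to0$ a.s.\ (with $S_1$ the corresponding a.s.\ finite sum), in particular in probability. For (d), taking expectations and bounding $\E\Psi_k\le(\E R^2)^{1/2}M^{1/2}$ by Cauchy--Schwarz --- here $\E R^2<\infty$, since the ``pseudo'' initial state lies in $L^2$ by the same moving‑average estimate and the prescribed $\widehat{X}_{\vt,1}$ may be taken deterministic --- yields $\E\sup_\vt|\wh\LL(\vt,Y^n)-\LL(\vt,Y^n)|\le\frac1n\sum_{k\ge1}(\rho')^k\E\Psi_k=O(1/n)\to0$.

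The main obstacle is Step~1: establishing the uniform‑in‑$\vt$ geometric decay of $\Phi_\vt^{\,k}$ and, above all, propagating it through the once‑ and twice‑differentiated Kalman recursions, which are perturbed rather than pure geometric recursions and hence produce polynomial‑times‑geometric bounds that must be re‑absorbed into a slightly larger geometric rate, while simultaneously controlling the a.s.\ (resp.\ $L^2$) finiteness of the initialisation discrepancies $R,R_1,R_2$ via \Cref{serieslemma} and the tail bound on $Y$. Everything after that is routine; indeed this lemma is the misspecified‑parameter‑space analogue of the corresponding results of \cite{schlemmstelzer} for a correctly specified space, whose proofs transfer almost verbatim once uniformity over $\Theta$ has been secured.
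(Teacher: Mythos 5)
Your proposal is correct and follows essentially the same route as the paper: the paper simply cites \cite[Lemmas 2.7 and 2.11]{schlemmstelzer} (together with \cite[Lemma 3.14]{schlemmstelzer} for uniformity over the possibly misspecified compact parameter space), and those cited proofs rest on exactly the mechanism you reconstruct --- the initialization error propagated through the stable filter recursion $\e^{A_\vt h}-K_\vt C_\vt$ decays geometrically, uniformly in $\vt$, and the differentiated recursions inherit this up to a slightly larger geometric rate. The only point worth noting is that for part (d) both you and the paper implicitly need integrability (not just a.s.\ finiteness) of the initialization discrepancy, which you handle by taking $\widehat{X}_{\vt,1}$ deterministic, consistent with the paper's treatment.
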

\begin{proof}
(a) \, is \cite[Lemma 2.7]{schlemmstelzer} taking \cite[Lemma 3.14]{schlemmstelzer} into account. The proof of (b) and (c)
follows in the same way by using \cite[Lemma 2.11]{schlemmstelzer}.\\
(d) \, As in the proof of \cite[Lemma 2.7]{schlemmstelzer}, we have $\sup_{\vt \in \Theta} \E  \left\| \wh \epsilon_{\vt, k} \right\|  < \infty$, $\sup_{\vt \in \Theta} \E \left\| \epsilon_{\vt, k} \right\|  < \infty$ and for some $\rho \in (0,1)$ the behavior
\begin{eqnarray*}
\sup_{\vt \in \Theta} \E \left[ \left| \wh \LL ( \vt, Y^n ) - \LL ( \vt, Y^n ) \right| \right] 
\leq  \frac{C}{n} \sum_{k=1}^{n} \rho^k \sup_{\vt \in \Theta} (\E  \| \he_{\vt,k} \|  + \E  \| \epsilon_{\vt,k} \|) \stackrel{n \to \infty}{\to} 0.
\end{eqnarray*}

\end{proof}
We conclude this section with another lemma, which plays a role in the proof of consistency of information criteria for MCARMA processes.
\begin{lemma}\label{loglikelemma}
Assume that the space $\Theta$ with associated family of continuous-time state space models $(A_{\vt}, B_{\vt}, C_{\vt}, L_{\vt})_{\vt \in \Theta}$ satisfies \autoref{as_D}.
 Let  $\text{MCARMA}(A_{\vt_0},B_{\vt_0},C_{\vt_0},L_{\vt_0})=Y$.
 Then for every $\vt \in \Theta$
\begin{eqnarray*}
\mathcal{Q}(\vartheta)-\mathcal{Q}(\vartheta_0)\geq \tr \left( V^{-1}_{\vt} \E \left[ \left( \epsilon_{\vt, 1} - \epsilon_{\vt_0, 1} \right) \left( \epsilon_{\vt, 1} - \epsilon_{\vt_0, 1} \right)^T \right]\right)\geq 0.
\end{eqnarray*}
Furthermore, if $\text{MCARMA}(A_{\vt},B_{\vt},C_{\vt},L_{\vt})\not=Y$, then
\begin{equation}\label{loglikelemmaeq2}
\tr \left( V^{-1}_{\vt} \E \left[ \left( \epsilon_{\vt, 1} - \epsilon_{\vt_0, 1} \right) \left( \epsilon_{\vt, 1} - \epsilon_{\vt_0, 1} \right)^T \right]\right)>0.
\end{equation}
\end{lemma}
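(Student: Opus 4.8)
The plan is to make $\mathscr{Q}(\vt)-\mathscr{Q}(\vt_0)$ fully explicit and to read off from it exactly the quantity on the right-hand side. Spelling out $l_{\vt,1}$ and taking expectations gives $\mathscr{Q}(\vt)=d\log(2\pi)+\log\det V_\vt+\tr\bigl(V_\vt^{-1}\E[\epsilon_{\vt,1}\epsilon_{\vt,1}^T]\bigr)$, and since at the true parameter the pseudo-innovations are the innovations of \Cref{definition innovations} with $V_{\vt_0}=\E[\epsilon_{\vt_0,1}\epsilon_{\vt_0,1}^T]$, also $\mathscr{Q}(\vt_0)=d\log(2\pi)+\log\det V_{\vt_0}+d$. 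The crucial observation is that $\epsilon_{\vt,1}-\epsilon_{\vt_0,1}$ lies in $\overline{\text{span}}\{Y(jh):j\le 0\}$: by \Cref{serieslemma}(a) both $\epsilon_{\vt,1}$ and $\epsilon_{\vt_0,1}$ equal $Y(h)$ plus an $L^2$-convergent series (the series converges because $\sup_\vt\|c_{\vt,\nu}\|\le C\rho^\nu$ and $\E\|Y(kh)\|^2<\infty$) in the strictly past observations $Y(0),Y(-h),\dots$, so the leading term cancels in the difference. Since $\epsilon_{\vt_0,1}$ is by definition orthogonal to that closed span and all the variables are centered, $\E\bigl[(\epsilon_{\vt,1}-\epsilon_{\vt_0,1})\epsilon_{\vt_0,1}^T\bigr]=0_{d\times d}$, hence $\E[\epsilon_{\vt,1}\epsilon_{\vt,1}^T]=\E[(\epsilon_{\vt,1}-\epsilon_{\vt_0,1})(\epsilon_{\vt,1}-\epsilon_{\vt_0,1})^T]+V_{\vt_0}$, and substituting this back yields the identity
\begin{equation*}
\mathscr{Q}(\vt)-\mathscr{Q}(\vt_0)=\tr\Bigl(V_\vt^{-1}\E\bigl[(\epsilon_{\vt,1}-\epsilon_{\vt_0,1})(\epsilon_{\vt,1}-\epsilon_{\vt_0,1})^T\bigr]\Bigr)+R(\vt),\qquad R(\vt):=\log\det V_\vt-\log\det V_{\vt_0}+\tr\bigl(V_\vt^{-1}V_{\vt_0}\bigr)-d.
\end{equation*}

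Next I would check that both terms on the right are nonnegative. First, $V_{\vt_0}$ is positive definite (using that $\Sigma^L_{\vt_0}$ is non-singular by \ref{as_D2} and $C_{\vt_0}$ has full rank by \ref{as_D4}), and $V_\vt$ is positive definite for every $\vt$ since it appears in $\log\det V_\vt$ and in the Kalman gain through $(C_\vt\Omega_\vt C_\vt^T)^{-1}$. For $R(\vt)$, put $W:=V_{\vt_0}^{\frac12}V_\vt^{-1}V_{\vt_0}^{\frac12}$, a symmetric positive definite $d\times d$ matrix; then $\tr(V_\vt^{-1}V_{\vt_0})=\tr W$ and $\log\det V_\vt-\log\det V_{\vt_0}=-\log\det W$, so $R(\vt)=\sum_{i=1}^d(\mu_i-\log\mu_i-1)$, where $\mu_1,\dots,\mu_d>0$ are the eigenvalues of $W$. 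Since $x\mapsto x-\log x-1$ is nonnegative on $(0,\infty)$ and vanishes only at $x=1$, this gives $R(\vt)\ge 0$, with equality iff $W=I_{d\times d}$, i.e. iff $V_\vt=V_{\vt_0}$. For the other term, $M:=\E[(\epsilon_{\vt,1}-\epsilon_{\vt_0,1})(\epsilon_{\vt,1}-\epsilon_{\vt_0,1})^T]$ is positive semidefinite, so $\tr(V_\vt^{-1}M)=\tr(V_\vt^{-\frac12}MV_\vt^{-\frac12})\ge 0$, with equality iff $M=0_{d\times d}$, i.e. iff $\epsilon_{\vt,1}=\epsilon_{\vt_0,1}$ $\Pas$ Together this gives the chain $\mathscr{Q}(\vt)-\mathscr{Q}(\vt_0)\ge\tr(V_\vt^{-1}M)\ge 0$, which is the first assertion.

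For the strict inequality \eqref{loglikelemmaeq2} I would argue by contraposition: suppose $\tr(V_\vt^{-1}M)=0$, equivalently $\epsilon_{\vt,1}=\epsilon_{\vt_0,1}$ $\Pas$ Since $(\epsilon_{\vt,k})_{k\in\Z}$ and $(\epsilon_{\vt_0,k})_{k\in\Z}$ are jointly stationary measurable functionals of the stationary process $(Y(kh))_{k\in\Z}$, this upgrades to $\epsilon_{\vt,k}=\epsilon_{\vt_0,k}$ $\Pas$ for every $k\in\Z$. Feeding this into the recursion $\widehat X_{\vt,k}=(\e^{A_\vt h}-K_\vt C_\vt)\widehat X_{\vt,k-1}+K_\vt Y((k-1)h)$ exhibits $Y$ as a causal, causally invertible linear filter of the true innovation sequence $(\epsilon_{\vt_0,k})$ whose Markov parameters $C_\vt\e^{A_\vt h(j-1)}K_\vt$, $j\ge 1$, coincide with those of the innovations representation attached to $\vt$; comparing with the innovations representation attached to $\vt_0$ and invoking minimality (\ref{as_D5}) together with identifiability of the family from its spectral density (\ref{as_D6}) forces $\text{MCARMA}(A_\vt,B_\vt,C_\vt,L_\vt)=Y$, contradicting the hypothesis. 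I expect this last implication to be the main obstacle — one must pass from equality of the one-step prediction filters to equality of the generated processes (in particular ruling out a spurious rescaling of the innovation covariance), which is exactly the point where the structural assumptions of \autoref{as_D} enter; I would carry it out along the lines of the identifiability analysis in \cite{schlemmstelzer}.
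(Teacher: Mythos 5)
Your proof of the first chain of inequalities is correct, and it is in substance the computation behind the result the paper invokes: the paper's own ``proof'' is nothing more than the citation of \cite[Lemma~2.10]{schlemmstelzer}, and your explicit decomposition $\mathscr{Q}(\vt)-\mathscr{Q}(\vt_0)=\tr\bigl(V_\vt^{-1}M\bigr)+R(\vt)$ with $M:=\E[(\epsilon_{\vt,1}-\epsilon_{\vt_0,1})(\epsilon_{\vt,1}-\epsilon_{\vt_0,1})^T]$ and both summands nonnegative is exactly the argument given there. The orthogonality step via \Cref{serieslemma}(a) and \Cref{definition innovations}, and the eigenvalue argument for $R(\vt)\geq 0$, are sound.

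The strict inequality \eqref{loglikelemmaeq2} is where there is a genuine gap, and it sits precisely at the step you flag. The implication your contraposition needs --- $\epsilon_{\vt,1}=\epsilon_{\vt_0,1}$ $\Pas$ implies $\text{MCARMA}(A_{\vt},B_{\vt},C_{\vt},L_{\vt})=Y$ --- does not follow from \autoref{as_D}, and in fact fails. The pseudo-innovations depend on $\vt$ only through $(\e^{A_\vt h}-K_\vt C_\vt,\,K_\vt,\,C_\vt)$, and the Kalman gain is invariant under a rescaling of the driving noise: replacing $\Sigma^L_{\vt_0}$ by $c\,\Sigma^L_{\vt_0}$ with $c\neq 1$ and keeping $(A,B,C)$ fixed scales $\cancel{\Sigma}_h$, $\Omega$ and $V$ by $c$ but leaves $K$, and hence every pseudo-innovation, unchanged. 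Such a parameter lies in the same Echelon space (the noise covariance is a free parameter there), generates a process different from $Y$, and yet has $M=0_{d\times d}$, so the trace in \eqref{loglikelemmaeq2} vanishes. This is not a defect you can repair by a finer identifiability analysis: what \ref{as_D5} and \ref{as_D6} actually deliver is that $M=0_{d\times d}$ \emph{together with} $V_\vt=V_{\vt_0}$ forces the two spectral densities, and hence the models, to coincide. In other words, the quantity that is strictly positive whenever $\text{MCARMA}(A_{\vt},B_{\vt},C_{\vt},L_{\vt})\neq Y$ is the full difference $\tr\bigl(V_\vt^{-1}M\bigr)+R(\vt)=\mathscr{Q}(\vt)-\mathscr{Q}(\vt_0)$, not the trace term alone; that weaker statement is what \cite[Lemma~2.10]{schlemmstelzer} proves and is also all that the paper uses downstream (Case~1 of the proof of \Cref{strongcons} only needs $\mathscr{Q}(\vt_0)-\mathscr{Q}(\vt^\ast)<0$). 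Your own decomposition already does all the work for that corrected claim: $R(\vt)=0$ forces $V_\vt=V_{\vt_0}$, $\tr\bigl(V_\vt^{-1}M\bigr)=0$ forces equality of the whitening filters, and the two together determine the spectral density, contradicting $\text{MCARMA}(A_{\vt},B_{\vt},C_{\vt},L_{\vt})\neq Y$. You should therefore prove strictness of the sum rather than of the trace term.
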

\begin{proof}
The proof is given in \cite[Lemma~2.10]{schlemmstelzer}.
\end{proof}

\subsection{Auxiliary results for \Cref{sec:IC}}

In this appendix, we give the calculations for the Brownian motion case in \Cref{sec:IC}.
\begin{lemma}\label{tracelemma}
Let $A, B \in \R^{d \times d}$ be matrices, where $B$ is symmetric. Then
$$\tr \left( (\vecc(I_{d \times d}) \otimes \vecc( I_{d \times d} )^T) (A \otimes B ) \right) = \tr ( AB).$$
\end{lemma}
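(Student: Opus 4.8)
The plan is to collapse the left-hand side to a scalar using the cyclic invariance of the trace, and then apply the two standard identities linking $\vecc$ with the Kronecker product. No deep idea is needed; the whole statement is a bookkeeping computation, and the only point requiring care is matching the transpose placement in the $\vecc$--Kronecker rule to the column-stacking convention fixed in the Notation section.

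First I would record that, because $\vecc(I_{d\times d})$ is a column vector, the Kronecker product $\vecc(I_{d\times d}) \otimes \vecc(I_{d\times d})^T$ is nothing but the outer product $\vecc(I_{d\times d})\vecc(I_{d\times d})^T \in \R^{d^2 \times d^2}$. Writing $v := \vecc(I_{d\times d})$, the left-hand side is therefore $\tr\bigl(v v^T (A \otimes B)\bigr)$, and by cyclicity of the trace (and since $v^T(A\otimes B)v$ is a scalar) this equals $v^T (A \otimes B) v$.

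Next I would invoke the identity $\vecc(PXQ) = (Q^T \otimes P)\,\vecc(X)$, valid for column-stacking $\vecc$. Applying it with $P = B$, $X = I_{d\times d}$ and $Q = A^T$ yields $(A \otimes B)v = (A \otimes B)\vecc(I_{d\times d}) = \vecc(B\,I_{d\times d}\,A^T) = \vecc(BA^T)$. Then, using $\vecc(P)^T\vecc(Q) = \tr(P^T Q)$, I get $v^T(A\otimes B)v = \vecc(I_{d\times d})^T \vecc(BA^T) = \tr\bigl(I_{d\times d}^T B A^T\bigr) = \tr(BA^T)$.

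Finally the symmetry of $B$ closes the argument: $\tr(BA^T) = \tr\bigl((BA^T)^T\bigr) = \tr(A B^T) = \tr(AB)$. The main obstacle, such as it is, is simply to keep $B$ and $A^T$ in these explicit roles throughout so that the transposes in the $\vecc$--Kronecker identity line up correctly; everything else is immediate.
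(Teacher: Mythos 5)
Your proof is correct. Every step checks out: $\vecc(I_{d\times d})\otimes\vecc(I_{d\times d})^T$ is indeed the outer product $vv^T$ with $v=\vecc(I_{d\times d})$, cyclicity reduces the trace to the scalar $v^T(A\otimes B)v$, the identity $\vecc(PXQ)=(Q^T\otimes P)\vecc(X)$ (which matches the paper's column-stacking convention) gives $(A\otimes B)v=\vecc(BA^T)$, the pairing $\vecc(P)^T\vecc(Q)=\tr(P^TQ)$ yields $\tr(BA^T)$, and the symmetry of $B$ converts this to $\tr(AB)$. The route is different from the paper's: the published text only asserts that the lemma follows from ``straightforward algebraic calculations,'' and the computation suppressed in the source proceeds by writing out the explicit block structure of $\vecc(I_{d\times d})\otimes\vecc(I_{d\times d})^T$, multiplying by $A\otimes B$ entrywise, and reading off the diagonal to get $\sum_{i,j}a_{ij}b_{ij}$ before invoking symmetry. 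Your argument replaces that coordinate bookkeeping with the two standard $\vecc$--Kronecker identities, which is shorter, less error-prone, and makes transparent exactly where the symmetry of $B$ enters (namely in the final step $\tr(BA^T)=\tr(AB)$, without which the left-hand side would equal $\tr(A^TB)$ rather than $\tr(AB)$); the coordinate approach, for its part, requires no external identities at all. Either proof would serve; yours is the one I would keep.
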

\begin{proof} The proof can be derived by straightforward algebraic calculations.
\end{proof}

\begin{lemma}\label{BMlemma2}
Assume that the L\'{e}vy process $L$ which drives the observed process $Y$ is a Brownian motion.
\begin{itemize}
\item[(a)] Assume that the space $\Theta$ with associated family of continuous-time state space models \linebreak $(A_{\vt}, B_{\vt}, C_{\vt}, L_{\vt})_{\vt \in \Theta}$ satisfies \autoref{as_D} and that $\text{MCARMA}(A_{\vt^\ast},B_{\vt^\ast},C_{\vt^\ast},L_{\vt^\ast})=Y$ for the pseudo-true parameter $\vt^\ast$. Then
$$\II(\vt^\ast)=2\HH(\vt^\ast).$$
\item[(b)] There exists a space ${\Theta}_0$  with associated family of continuous-time state space models \linebreak $(A_{\vt}, B_{\vt}, C_{\vt}, L_{\vt})_{\vt \in {\Theta_0}}$ satisfying \autoref{as_D} such that $\text{MCARMA}(A_{\vt_0},B_{\vt_0},C_{\vt_0},L_{\vt_0})=Y$ for some $\vt_0\in\Theta_0$. Moreover, $\Theta_0$ is
nested in $\Theta_0^E$ with map $F$, $N({\Theta}_0)=N(\Theta_0^E)-1$ and
$$\lambda_{\text{max}}(\MM_F(\vt_0^E)^{\frac12} \II(\vt_0^E) \MM_F(\vt_0^E)^{\frac12}) = 2.$$
\end{itemize}
\end{lemma}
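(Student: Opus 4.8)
The plan for part (a) is to compute $\II(\vt^\ast)$ and $\HH(\vt^\ast)$ in closed form and read off the factor $2$. In the correctly specified Gaussian situation $\widehat{\LL}$ is, up to the factor $-2/n$, a genuine Gaussian log-likelihood, so the pseudo-innovations $(\epsilon_{\vt^\ast,k})_{k\in\Z}$ are i.i.d.\ $\NN(0,V_{\vt^\ast})$. Starting from the representation \eqref{partdevl} I would use three facts: $\E[\epsilon_{\vt^\ast,k}\epsilon_{\vt^\ast,k}^T]=V_{\vt^\ast}$; by \Cref{serieslemma} the derivatives $\partial_i\epsilon_{\vt^\ast,k}$ and $\partial^2_{i,j}\epsilon_{\vt^\ast,k}$ are functions of $\{Y((k-\nu)h):\nu\ge1\}$ and hence, through the moving-average representation of $Y$ with respect to its own innovations, of $\{\epsilon_{\vt^\ast,k-\nu}:\nu\ge1\}$ only, so that they are independent of $\epsilon_{\vt^\ast,k}$; and for $\epsilon\sim\NN(0,V_{\vt^\ast})$ and symmetric $M$ one has $\Var(\epsilon^T M\epsilon)=2\tr(MV_{\vt^\ast}MV_{\vt^\ast})$, which is read off from \Cref{tracelemma} after vectorising. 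Together with the symmetry of the Gaussian law (all odd moments vanish) these facts show that $(\nabla_\vt l_{\vt^\ast,k})_{k\in\Z}$ is a mean-zero, serially uncorrelated sequence, whence $\II(\vt^\ast)=\Var(\nabla_\vt l_{\vt^\ast,1})$ and a direct computation gives
\begin{equation*}
\II(\vt^\ast)_{ij}=2\,\tr\!\big(V_{\vt^\ast}^{-1}\partial_i V_{\vt^\ast}\,V_{\vt^\ast}^{-1}\partial_j V_{\vt^\ast}\big)+4\,\E\!\big[\partial_i\epsilon_{\vt^\ast,1}^T V_{\vt^\ast}^{-1}\partial_j\epsilon_{\vt^\ast,1}\big].
\end{equation*}
Differentiating \eqref{partdevl} once more with respect to $\vt_j$, evaluating at $\vt^\ast$ and taking expectations — the terms containing $\E[\epsilon_{\vt^\ast,1}]=0$ or $\E[\partial_i\epsilon_{\vt^\ast,1}\,\epsilon_{\vt^\ast,1}^T]=0$ drop out — yields $\HH(\vt^\ast)_{ij}=\tr(V_{\vt^\ast}^{-1}\partial_i V_{\vt^\ast}V_{\vt^\ast}^{-1}\partial_j V_{\vt^\ast})+2\,\E[\partial_i\epsilon_{\vt^\ast,1}^T V_{\vt^\ast}^{-1}\partial_j\epsilon_{\vt^\ast,1}]$, which is exactly half of the previous display, so $\II(\vt^\ast)=2\HH(\vt^\ast)$. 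Alternatively one could invoke the classical information-matrix identity for correctly specified maximum likelihood estimation and merely track the factor $-2/n$ in the definition of $\widehat{\LL}$.

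For part (b) I would first construct $\Theta_0$ explicitly. Let $\vt_0^E\in\Theta_0^E$ be the (pseudo-)true parameter generating $Y$, single out one coordinate of the parameter vector corresponding to a free entry of $B$, and take $\Theta_0$ to be a sufficiently small compact ball centred at $\vt_0^E$ inside the hyperplane of $\Theta_0^E$ on which that coordinate is frozen at its $\vt_0^E$-value. Then $N(\Theta_0)=N(\Theta_0^E)-1$, the coordinate inclusion provides a matrix $F\in\R^{N(\Theta_0^E)\times N(\Theta_0)}$ with $F^TF=I_{N(\Theta_0)\times N(\Theta_0)}$ together with an offset $c$ as in \Cref{def:4.2}, so $\Theta_0$ is nested in $\Theta_0^E$ with map $F$, and $\Theta_0$ contains the true parameter $\vt_0$ (namely $\vt_0^E$ read in the reduced coordinates), which is then its pseudo-true parameter. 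The bulk of the work is checking that $\Theta_0$ still satisfies \autoref{as_D}: compactness $\ref{as_D1}$ and the moment/mean/eigenvalue conditions $\ref{as_D2}$, $\ref{as_D3}$, $\ref{as_D9}$, the smoothness and full-rank-of-$C_\vt$ condition $\ref{as_D4}$, minimality $\ref{as_D5}$ and identifiability from the spectral density $\ref{as_D6}$ are inherited from $\Theta_0^E$ (a subfamily of an identifiable family is identifiable, and the restricted maps are still three times differentiable); $\ref{as_D8}$ holds because $\vt_0$ is an interior point of the ball; $\ref{as_D10}$ may be dropped in the correctly specified case (cf.\ \Cref{chap3remark}(b)); and $\ref{as_D11}$ follows from $\widehat{\LL}_0(\vt,Y^n)=\widehat{\LL}(F\vt+c,Y^n)$, which by the chain rule makes $F^T\HH(\vt_0^E)F$ the Fisher information matrix in $\Theta_0$, nonsingular since $F$ has full column rank and $\HH(\vt_0^E)$ is positive definite.

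It remains to compute the eigenvalue. By part (a), $\II(\vt_0^E)=2\HH(\vt_0^E)$. Put $\Pi:=\HH(\vt_0^E)^{\frac12}F\big(F^T\HH(\vt_0^E)F\big)^{-1}F^T\HH(\vt_0^E)^{\frac12}$, which, since $F^TF=I_{N(\Theta_0)\times N(\Theta_0)}$, is the orthogonal projection onto the $N(\Theta_0)$-dimensional range of $\HH(\vt_0^E)^{\frac12}F$; then, from the definition of $\MM_F$ in \Cref{strongcons}, $\HH(\vt_0^E)^{\frac12}\MM_F(\vt_0^E)\HH(\vt_0^E)^{\frac12}=-(I-\Pi)$, so
\begin{equation*}
\HH(\vt_0^E)^{\frac12}\MM_F(\vt_0^E)\,\II(\vt_0^E)\,\MM_F(\vt_0^E)\HH(\vt_0^E)^{\frac12}=2\big(\HH(\vt_0^E)^{\frac12}\MM_F(\vt_0^E)\HH(\vt_0^E)^{\frac12}\big)^2=2(I-\Pi).
\end{equation*}
Its only non-zero eigenvalue is $2$, with multiplicity $N(\Theta_0^E)-N(\Theta_0)=1$, which is the quantity appearing in the statement. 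The main obstacle is not any one computation but the bookkeeping: in (a), keeping precise track of which moments vanish — independence from the strict past versus Gaussian symmetry — and, in (b), verifying carefully that the frozen-coordinate space inherits every clause of \autoref{as_D}.
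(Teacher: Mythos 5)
Your part (a) is essentially the paper's own argument: both compute $\II(\vt^\ast)$ as the variance of the score at a single time, using that the innovations are i.i.d.\ Gaussian, that $\partial_i\epsilon_{\vt^\ast,k}$ lies in the closed span of the strict past (\Cref{serieslemma}) and is therefore independent of $\epsilon_{\vt^\ast,k}$, and the Gaussian fourth-moment identity (the paper routes this through $\E[\tilde\epsilon\tilde\epsilon^T\otimes\tilde\epsilon\tilde\epsilon^T]=K_{d,d}+I+\vecc(I)\vecc(I)^T$ and \Cref{tracelemma}); the only cosmetic difference is that you derive the formula for $\HH(\vt^\ast)_{ij}$ by differentiating \eqref{partdevl} once more, where the paper simply cites the known expressions (2.33a)--(2.33b) of Schlemm--Stelzer. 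In part (b), however, you take a genuinely different route. The paper chooses $F$ to consist of $N(\Theta_0^E)-1$ orthonormal \emph{eigenvectors of} $\HH(\vt_0^E)$, which makes $\MM_F(\vt_0^E)$ an explicit rank-one matrix $-\lambda_N^{-1}v_Nv_N^T$ and reduces the eigenvalue claim to evaluating $v_N^T\II(\vt_0^E)v_N=2\lambda_N$. You instead freeze one coordinate of $B$, take $F$ to be the coordinate inclusion, and observe that $\HH^{\frac12}\MM_F\HH^{\frac12}=-(I-\Pi)$ with $\Pi$ an orthogonal projection of rank $N(\Theta_0)$, so that $\HH^{\frac12}\MM_F\II\MM_F\HH^{\frac12}=2(I-\Pi)$. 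Your argument is more informative: it shows that for \emph{any} admissible nested $F$ the nonzero eigenvalues all equal $2$ in the Gaussian case (which is exactly why $C^\ast=2$ in \Cref{consistremark}(b)), whereas the paper's eigenvector construction buys a shorter computation at the price of a less natural submodel. Your verification that the frozen-coordinate space inherits \autoref{as_D} is the same inheritance argument the paper needs for its own $\Theta_0\subseteq F^T\Theta_0^E$ and is sound.

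Two small points you should make explicit. First, the statement asks for $\lambda_{\text{max}}(\MM_F(\vt_0^E)^{\frac12}\II(\vt_0^E)\MM_F(\vt_0^E)^{\frac12})$, while you compute the spectrum of $\HH(\vt_0^E)^{\frac12}\MM_F(\vt_0^E)\II(\vt_0^E)\MM_F(\vt_0^E)\HH(\vt_0^E)^{\frac12}$; these agree here because $AB$ and $BA$ have the same nonzero eigenvalues, so the former has the nonzero spectrum of $\II^{\frac12}\MM_F\II^{\frac12}=2\HH^{\frac12}\MM_F\HH^{\frac12}$, again $\{2\}$ up to the sign convention. Second, as defined in \Cref{strongcons}(c) the matrix $\MM_F(\vt^\ast)=-\HH^{-1}(\vt^\ast)+F(F^T\HH(\vt^\ast)F)^{-1}F^T$ is negative semidefinite, so $\MM_F^{\frac12}$ only makes sense after replacing $\MM_F$ by $-\MM_F$ (or $|\MM_F|$); this is a convention issue present in the paper itself, but a complete write-up should flag it.
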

\begin{proof}
(a) \, An analogous statement for vector ARMA processes is given in \cite[Remark 2]{boubacar}. However, they state it without a proof.
Since the proof is not so obvious we decided to sketch it here for MCARMA processes.
First, note that since the driving L\'{e}vy process is a Brownian motion, it holds per construction that the linear innovations $(\epsilon_k)_{k \in \Z}$ of the process $(Y(kh))_{k \in \Z}$ are i.i.d. $\NN(0, V)$--distributed (cf. \Cref{definition innovations}).
Moreover, per assumption it also holds that $\epsilon_{\vt^\ast, k} = \epsilon_{k}$ for every $k \in \Z$, hence we also have that $\epsilon_{\vt^\ast, k} \stackrel{}{\sim} \NN(0, V_{\vt^\ast})$ and $V_{\vt^\ast} = V$.
By definition
$$\II(\vt^\ast) = \lim_{n \to \infty} n \Var \left( \nabla_\vt \LL (\vt^\ast, Y^n) \right), $$
which means that for $i, j \in \{ 1, \ldots, N(\Theta)\}$ we have to study terms of the form
\begin{eqnarray*}
\Var \left( n\nabla_\vt \LL (\vt^\ast, Y^n) \right)_{ij}
\hspace*{-0.3cm}
 &\stackrel{\eqref{partdevl}}{=}&\hspace*{-0.3cm}
\sum_{k=1}^n \E \left[ \left( \tr \left( V^{-1}_{\vt^\ast} \partial_i V_{\vt^\ast} \right) - \tr \left( V^{-1}_{\vt^\ast} \epsilon_{\vt^\ast,k } \epsilon^T_{\vt^\ast,k } V^{-1}_{\vt^\ast} \partial_i V_{\vt^\ast} \right) + 2 \partial_i \epsilon^T_{\vt_\ast,k} V^{-1}_{\vt^\ast} \epsilon_{\vt^\ast,k} \right) \right. \notag \\
&&\hspace*{0.6cm}\left. \cdot \left( \tr \left( V^{-1}_{\vt^\ast} \partial_j V_{\vt^\ast} \right) - \tr \left( V^{-1}_{\vt^\ast} \epsilon_{\vt^\ast,k } \epsilon^T_{\vt^\ast,k } V^{-1}_{\vt^\ast} \partial_j V_{\vt^\ast} \right) + 2 \partial_j \epsilon^T_{\vt_\ast,k} V^{-1}_{\vt^\ast} \epsilon_{\vt^\ast,k} \right) \right] \notag \\
&& \hspace*{-0.6cm}+ \sum_{k=1}^n \sum_{\substack{ l =1 \\ l \neq k}}^n \E \left[ \left( \tr \left( V^{-1}_{\vt^\ast} \partial_i V_{\vt^\ast} \right) - \tr \left( V^{-1}_{\vt^\ast} \epsilon_{\vt^\ast,k } \epsilon^T_{\vt^\ast,k } V^{-1}_{\vt^\ast} \partial_i V_{\vt^\ast} \right) + 2 \partial_i \epsilon^T_{\vt_\ast,k} V^{-1}_{\vt^\ast} \epsilon_{\vt^\ast,k} \right) \right. \notag \\
&&\hspace*{0.6cm}\left. \cdot \left( \tr \left( V^{-1}_{\vt^\ast} \partial_j V_{\vt^\ast} \right) - \tr \left( V^{-1}_{\vt^\ast} \epsilon_{\vt^\ast,l } \epsilon^T_{\vt^\ast,l } V^{-1}_{\vt^\ast} \partial_j V_{\vt^\ast} \right) + 2 \partial_j \epsilon^T_{\vt_\ast,l} V^{-1}_{\vt^\ast} \epsilon_{\vt^\ast,l} \right)   \right] \notag\\
&=:&\sum_{k=1}^na_k+\sum_{k=1}^n \sum_{\substack{ l =1 \\ l \neq k}}^nb_{k,l}. \label{BMcalc2}
\end{eqnarray*}
We start to investigate $a_k$. By definition, every innovation $\epsilon_{\vt^\ast, k}$ is orthogonal to $\overline{\text{span}} \{ Y(jh): - \infty < j < k \}$ and by \Cref{serieslemma}(b) both $\partial_i \epsilon_{\vt^\ast, k}$ and $\partial_j \epsilon_{\vt^\ast, k}$ are elements of $\overline{\text{span}} \{ Y(jh): - \infty < j < k \}$.
Hence, $\epsilon_{\vt^\ast, k}$ is independent of $\partial_i \epsilon_{\vt^\ast, k}$ and $\partial_j \epsilon_{\vt^\ast, k}$.
This, together with the independence of the innovation sequence $(\epsilon_{\vt^\ast, k})_{k\in\N}$, the fact that $\E[ \partial_i \epsilon_{\vt^\ast,k} ] = 0$, $\E[\epsilon_{\vt^\ast,k} \epsilon^T_{\vt^\ast,k}] = V_{\vt^\ast}$ and the interchangeability of trace and expectation, allows us to simplify
\begin{align}
 a_k&= -\tr \left( V^{-1}_{\vt^\ast} \partial_i V_{\vt^\ast} \right) \tr \left( V^{-1}_{\vt^\ast} \partial_j V_{\vt^\ast} \right)
 + \E \left[ \tr \left( V^{-1}_{\vt^\ast} \epsilon_{\vt^\ast,k } \epsilon^T_{\vt^\ast,k } V^{-1}_{\vt^\ast} \partial_i V_{\vt^\ast} \right) \tr \left( V^{-1}_{\vt^\ast} \epsilon_{\vt^\ast,k } \epsilon^T_{\vt^\ast,k } V^{-1}_{\vt^\ast} \partial_j V_{\vt^\ast} \right) \right] \notag \\
&\quad + 4 \E \left[ \partial_i \epsilon^T_{\vt^\ast,k} V^{-1}_{\vt^\ast} \epsilon_{\vt^\ast,k} \partial_j \epsilon^T_{\vt^\ast,k } V^{-1}_{\vt^\ast} \epsilon_{\vt^\ast,k} \right] \notag\\
&=:a_k^{(1)}+a_k^{(2)}+a_k^{(3)}. \label{BMcalc3}
\end{align}
For the second term, we define $\tilde{\epsilon}_{\vt^\ast,k} = V^{-\frac12}_{\vt^\ast} \epsilon_{\vt^\ast,k} \stackrel{\text{}}{\sim} \NN(0, I_{d \times d})$ and have by standard calculation rules for Kronecker products (\cite[Proposition 7.1.6 and Proposition 7.1.12]{bernstein}):
\begin{eqnarray*}
a_k^{(2)}&=& \E \left[ \tr \left( \left( V^{-\frac12}_{\vt^\ast} \tilde{\epsilon}_{\vt^\ast,k} \tilde{\epsilon}^T_{\vt^\ast,k} V^{-\frac12}_{\vt^\ast} \partial_i V_{\vt^\ast} \right) \otimes \left( V^{-\frac12}_{\vt^\ast} \tilde{\epsilon}_{\vt^\ast,k} \tilde{\epsilon}^T_{\vt^\ast,k} V^{-\frac12}_{\vt^\ast} \partial_j V_{\vt^\ast} \right) \right) \right] \notag \\
&=& \tr \left( \left( V^{-\frac12}_{\vt^\ast} \otimes V^{-\frac12}_{\vt^\ast} \right) \cdot \E \left[ \tilde{\epsilon}_{\vt^\ast,k} \tilde{\epsilon}^T_{\vt^\ast,k} \otimes \tilde{\epsilon}_{\vt^\ast,k} \tilde{\epsilon}^T_{\vt^\ast,k} \right] \cdot \left( V^{-\frac12}_{\vt^\ast} \partial_i V_{\vt^\ast} \otimes V^{-\frac12}_{\vt^\ast} \partial_j V_{\vt^\ast} \right) \right). \label{BMcalc4}
\end{eqnarray*}
Since $\tilde{\epsilon}_{\vt^\ast,k} \sim \NN(0, I_{d \times d})$, by means of \cite[Theorem 1]{balestra} the expectation appearing in the last line is
$$\E \left[ \tilde{\epsilon}_{\vt^\ast,k} \tilde{\epsilon}^T_{\vt^\ast,k} \otimes \tilde{\epsilon}_{\vt^\ast,k} \tilde{\epsilon}^T_{\vt^\ast,k} \right]  = K_{d,d} + I_{d^2 \times d^2} + \vecc(I_{d \times d}) \otimes \vecc( I_{d \times d} )^T,$$
where $K_{d,d}$ is the $d^2 \times d^2$ Kronecker permutation matrix (\cite[Eq. (7.1.20)]{bernstein}).
Together with the linearity and the cyclic permutation property of the trace, we use this to obtain
\begin{align*}
 a_k^{(2)}&= \tr \left( K_{d,d} \left( V^{-\frac12}_{\vt^\ast} \otimes V^{-\frac12}_{\vt^\ast} \right) \left( V^{-\frac12}_{\vt^\ast} \partial_i V_{\vt^\ast} \otimes V^{-\frac12}_{\vt^\ast} \partial_j V_{\vt^\ast} \right) \right) \notag \\
&\quad+ \tr \left( \left( V^{-\frac12}_{\vt^\ast} \otimes V^{-\frac12}_{\vt^\ast} \right) \left( V^{-\frac12}_{\vt^\ast} \partial_i V_{\vt^\ast} \otimes V^{-\frac12}_{\vt^\ast} \partial_j V_{\vt^\ast} \right) \right) \notag \\
&\quad+ \tr \left( (\vecc(I_{d \times d}) \otimes \vecc( I_{d \times d} )^T) \left( V^{-\frac12}_{\vt^\ast} \otimes V^{-\frac12}_{\vt^\ast} \right) \left( V^{-\frac12}_{\vt^\ast} \partial_i V_{\vt^\ast} \otimes V^{-\frac12}_{\vt^\ast} \partial_j V_{\vt^\ast} \right) \right) \notag \\
&= \tr \left( K_{d,d} \left( V^{-1}_{\vt^\ast} \partial_i V_{\vt^\ast} \otimes V^{-1}_{\vt^\ast} \partial_j V_{\vt^\ast} \right) \right) + \tr \left( V^{-1}_{\vt^\ast} \partial_i V_{\vt^\ast} \otimes V^{-1}_{\vt^\ast} \partial_j V_{\vt^\ast} \right) \notag \\
&\quad+ \tr \left( (\vecc(I_{d \times d}) \otimes \vecc( I_{d \times d} )^T) (V^{-1}_{\vt^\ast} \partial_i V_{\vt^\ast} \otimes V^{-1}_{\vt^\ast} \partial_j V_{\vt^\ast} ) \right). \label{BMcalc5}
\end{align*}
We now apply \Cref{tracelemma}  as well as \cite[Fact 7.4.30 xviii) and Proposition 7.1.12]{bernstein} to get
$$a_k^{(2)} = 2 \tr \left( V^{-1}_{\vt^\ast} \partial_i V_{\vt^\ast}  V^{-1}_{\vt^\ast} \partial_j V_{\vt^\ast} \right) + \tr \left( V^{-1}_{\vt^\ast} \partial_i V_{\vt^\ast} \right) \tr \left( V^{-1}_{\vt^\ast} \partial_j V_{\vt^\ast} \right).$$
It remains to consider $a_k^{(3)}$ in \eqref{BMcalc3}. The independence  of $\partial_j \epsilon_{\vt^\ast, k} \partial_i \epsilon^T_{\vt^\ast, k}$
 and $\epsilon_{\vt^\ast,k}$,  the cyclic permutation property of the trace and the interchangeability of expectation and trace leads to
\begin{eqnarray*}
a_k^{(3)}
&=& \E \left[ \tr \left( V^{-1}_{\vt^\ast} \epsilon_{\vt^\ast,k} \epsilon^T_{\vt^\ast,k} V^{-1}_{\vt^\ast}  \partial_j \epsilon_{\vt^\ast,k } \partial_i \epsilon^T_{\vt^\ast,k} \right) \right]\\
&=& \tr \left( \E \left[ V^{-1}_{\vt^\ast} \epsilon_{\vt^\ast,k} \epsilon^T_{\vt^\ast,k} V^{-1}_{\vt^\ast} \right] \E \left[ \partial_j \epsilon_{\vt^\ast,k } \partial_i \epsilon^T_{\vt^\ast,k} \right] \right) \\
&=& \tr \left( V^{-1}_{\vt^\ast} \E \left[ \partial_j \epsilon_{\vt^\ast,k } \partial_i \epsilon^T_{\vt^\ast,k} \right] \right) \\
&=& \E \left[ \partial_i \epsilon^T_{\vt^\ast,k} V^{-1}_{\vt^\ast} \partial_j \epsilon_{\vt^\ast,k } \right].
\end{eqnarray*}
Combining those calculations finally results in
$$a_k =a_k^{(1)}+a_k^{(2)}+a_k^{(3)}= 2 \tr \left( V^{-1}_{\vt^\ast} \partial_i V_{\vt^\ast}  V^{-1}_{\vt^\ast} \partial_j V_{\vt^\ast} \right) + 4 \E \left[ \partial_i \epsilon^T_{\vt^\ast,k} V^{-1}_{\vt^\ast} \partial_j \epsilon_{\vt^\ast,k } \right].$$
By similar calculations, we can verify that  $b_{k,l}=0$ for $k \neq l$. \\
Finally, this implies
$$(\II(\vt^\ast))_{ij}=a_k = 2 \tr \left( V^{-1}_{\vt^\ast} \partial_i V_{\vt^\ast}  V^{-1}_{\vt^\ast} \partial_j V_{\vt^\ast} \right) + 4 \E \left[ \partial_i \epsilon^T_{\vt^\ast,k} V^{-1}_{\vt^\ast} \partial_j \epsilon_{\vt^\ast,k } \right].$$
By \cite[(2.33a) and (2.33b)]{schlemmstelzer}, this term is equal to $(2 \HH(\vt^\ast))_{ij}$ as proclaimed.\\
(b) \, Denote by $v_1,\ldots,v_{N(\Theta_0^E)}$ the eigenvectors of $\HH(\vt_0^E)$ which are an
orthonormal basis of $\R^{N(\Theta_0^E)}$. Define $F=(v_1,\ldots,v_{N(\Theta_0^E)-1})\in\R^{N(\Theta_0^E)\times(N(\Theta_0^E)-1)}$
and let ${\Theta}_0 \subseteq F^T\Theta_0^E$ be compact such that $F {\Theta}_0+(\vt_0^E-FF^T\vt_0^E)\subseteq \Theta_0^E$ and $F^T\vt_0^E\in {\Theta}_0$. Define
$$(A_{\vt}, B_{\vt}, C_{\vt}, L_{\vt})_{\vt \in {\Theta}_0}:=(A_{F\vt+(\vt_0^E-FF^T\vt_0^E)}, B_{F\vt+(\vt_0^E-FF^T\vt_0^E)}, C_{F\vt+(\vt_0^E-FF^T\vt_0^E)}, L_{F\vt+(\vt_0^E-FF^T\vt_0^E)})_{\vt \in {\Theta}_0}.$$
Then $\vt_0=F^T\vt_0^E$, ${\Theta}_0$ is nested in $\Theta_0^E$ with map $F$ and  satisfies \autoref{as_D}, and $N({\Theta}_0)=N(\Theta_0^E)-1$.
Moreover, the eigenvectors $v_1,\ldots,v_{N(\Theta_0^E)-1}$ are basis vectors of the image of $F$ and $v_{N(\Theta_0^E)}$
is a basis of the kernel of $F^T$.  Then $v_{N(\Theta_0^E)}$ is an eigenvector of $\MM_F(\vt_0^E)^{\frac12} \II(\vt^E_0) \MM_F(\vt^E_0)^{\frac12}$ for the eigenvalue $2$
and $v_1,\ldots,v_{N(\Theta_0^E)-1}$  are eigenvectors of $\MM_F(\vt^E_0)^{\frac12} \II(\vt^E_0) \MM_F(\vt^E_0)^{\frac12}$ for the eigenvalue $0$ as well.
\end{proof}

{\small\bibliography{IC_CARMA_2015_05_01_arxiv}}

\begin{thebibliography}{10}

\bibitem{akaike}
{\sc Akaike, H.} (1973).
\newblock Information theory and an extension of the maximum likelihood
  principle.
\newblock {\em 2nd International Symposium on Information Theory\/} 267--281.

\bibitem{applebaum}
{\sc Applebaum, D.} (2009).
\newblock {\em {L{\'e}vy} {Processes} and {Stochastic} {Calculus}} second~ed.
\newblock Cambridge Stud. Adv. Math. Cambridge Univ. Press.

\bibitem{balestra}
{\sc Balestra, P. and Holly, A.} (1990).
\newblock A general {K}ronecker formula for the moments of the multivariate
  normal distribution.
\newblock {\em DEEP Cahier No. 9002\/}.

\bibitem{bernstein}
{\sc Bernstein, D.~S.} (2009).
\newblock {\em Matrix {M}athematics: {T}heory, {F}acts, and {F}ormulas}.
\newblock Princeton Univ. Press.

\bibitem{bertoin}
{\sc Bertoin, J.} (1998).
\newblock {\em L{\'e}vy {Processes}}.
\newblock Cambridge Tracts in Math. Cambridge Univ. Press.

\bibitem{boubacar}
{\sc Boubacar~Ma{\"\i}nassara, Y.} (2012).
\newblock Selection of weak {VARMA} models by modified {Akaike's} information
  criteria.
\newblock {\em J. Time Ser. Anal.\/} {\bf 33,} 121--130.

\bibitem{boubafrancq}
{\sc Boubacar~Ma{\"\i}nassara, Y. and Francq, C.} (2011).
\newblock Estimating structural {VARMA} models with uncorrelated but
  non-independent error terms.
\newblock {\em J. Multivariate Anal.\/} {\bf 102,} 496--505.

\bibitem{brockwell}
{\sc Brockwell, P. and Davis, R.} (1991).
\newblock {\em {Time Series:} {Theory} and {Methods}} second~ed.
\newblock Springer Ser. Statist. Springer, New York.

\bibitem{Brockwell:2014}
{\sc Brockwell, P.~J.} (2014).
\newblock Recent results in the theory and applications of {CARMA} processes.
\newblock {\em Ann. Inst. Statist. Math.\/} {\bf 66,} 647--685.

\bibitem{Brockwell:Davis:Yang:2011}
{\sc Brockwell, P.~J., Davis, R.~A. and Yang, Y.} (2011).
\newblock Estimation for non-negative {L}\'evy-driven {CARMA} processes.
\newblock {\em J. Bus. Econom. Statist.\/} {\bf 29,} 250--259.

\bibitem{brockwellschlemm}
{\sc Brockwell, P.~J. and Schlemm, E.} (2013).
\newblock Parametric estimation of the driving {L}{\'e}vy process of
  multivariate {CARMA} processes from discrete observations.
\newblock {\em J. Multivariate Anal.\/} {\bf 115,} 217--251.

\bibitem{cavanaughneath}
{\sc Cavanaugh, J.~E. and Neath, A.~A.} (1999).
\newblock Generalizing the derivation of the {Schwarz} information criterion.
\newblock {\em Comm. Statist. Theory Methods\/} {\bf 28,} 49--66.

\bibitem{Doob1944}
{\sc Doob, J.~L.} (1944).
\newblock The elementary {G}aussian processes.
\newblock {\em Ann. Math. Stat.\/} {\bf 25,} 229--282.

\bibitem{fasen:2014b}
{\sc Fasen, V.} (2014).
\newblock Dependence estimation for high frequency sampled multivariate carma
  models.
\newblock {\em Technical report}.

\bibitem{fasen:2014}
{\sc Fasen, V.} (2014).
\newblock Limit theory for high frequency sampled {MCARMA} models.
\newblock {\em Adv. Appl. Probab.\/} {\bf 46,} 846--877.

\bibitem{Ferguson}
{\sc Ferguson, T.~S.} (1996).
\newblock {\em A {C}ourse in {L}arge {S}ample {T}heory}.
\newblock Chapman \& Hall Texts Stat. Sci. Ser. Taylor \& Francis.

\bibitem{Ferrazzano:Fuchs}
{\sc Ferrazzano, V. and Fuchs, F.} (2013).
\newblock Noise recovery for {L}{\'e}vy-driven {CARMA} processes and
  high-frequency behaviour of approximating {R}iemann sums.
\newblock {\em Electron. J. Stat.\/} {\bf 7,} 533--561.

\bibitem{finkelstein}
{\sc Finkelstein, H.} (1971).
\newblock The law of the iterated logarithm for empirical distributions.
\newblock {\em Ann. Math. Statist.\/} {\bf 42,} 607--615.

\bibitem{hannandeistler}
{\sc Hannan, E. and Deistler, M.} (2012).
\newblock {\em The {Statistical} {Theory} of {Linear} {Systems}}.
\newblock Classics Appl. Math. Society for Industrial and Applied Mathematics.

\bibitem{Hannan:Quinn:1979}
{\sc Hannan, E.~J. and Quinn, B.~G.} (1979).
\newblock The determination of the order of an autoregression.
\newblock {\em J. Roy. Statist. Soc. Ser. B\/} {\bf 41,} 190--195.

\bibitem{imhof}
{\sc Imhof, J.~P.} (1961).
\newblock Computing the distribution of quadratic forms in normal variables.
\newblock {\em Biometrika\/} {\bf 48,} pp. 419--426.

\bibitem{kalman}
{\sc Kalman, R.~E.} (1960).
\newblock A new approach to linear filtering and prediction problems.
\newblock {\em J. Basic Eng.\/} {\bf 82,} 35--45.

\bibitem{marquardtstelzer}
{\sc Marquardt, T. and Stelzer, R.} (2007).
\newblock Multivariate {CARMA} processes.
\newblock {\em Stochastic Process. Appl.\/} {\bf 117,} 96--120.

\bibitem{oodaira}
{\sc Oodaira, H. and Yoshihara, K.-i.} (1971).
\newblock The law of the iterated logarithm for stationary processes satisfying
  mixing conditions.
\newblock {\em Kodai Math. J.\/} {\bf 23,} 311--334.

\bibitem{rissanen}
{\sc Rissanen, J.} (1978).
\newblock Modeling by shortest data description.
\newblock {\em Automatica\/} {\bf 14,} 465--471.

\bibitem{sato}
{\sc Sato, K.-I.} (1999).
\newblock {\em L{\'e}vy {Processes} and {Infinitely} {Divisible}
  {Distributions}}.
\newblock Cambridge Stud. Adv. Math. Cambridge Univ. Press.

\bibitem{schlemmstelzer2}
{\sc Schlemm, E. and Stelzer, R.} (2011).
\newblock Multivariate {CARMA} processes, continuous-time state space models
  and complete regularity of the innovations of the sampled processes.
\newblock {\em Bernoulli\/} {\bf 18,} 46--63.

\bibitem{schlemmstelzer}
{\sc Schlemm, E. and Stelzer, R.} (2012).
\newblock Quasi maximum likelihood estimation for strongly mixing state space
  models and multivariate {L{\'e}vy-driven} {CARMA} processes.
\newblock {\em Electron. J. Stat.\/} {\bf 6,} 2185--2234.

\bibitem{Schwarz:1978}
{\sc Schwarz, G.} (1978).
\newblock Estimating the dimension of a model.
\newblock {\em Ann. Stat.\/} {\bf 6,} 461--464.

\bibitem{sinwhite}
{\sc Sin, C.-Y. and White, H.} (1996).
\newblock Information criteria for selecting possibly misspecified parametric
  models.
\newblock {\em J. Econometrics\/} {\bf 71,} 207--225.

\bibitem{white1996}
{\sc White, H.} (1996).
\newblock {\em {E}stimation, {I}nference and {S}pecification {A}nalysis}.
\newblock Econom. Soc. Monogr. Cambridge Univ. Press.

\end{thebibliography}
\bibliographystyle{apt}
\end{document}